\newtheorem{theorem}{Theorem}[section]
\newtheorem{lemma}{Lemma}
\newtheorem{corollary}{Corollary}[section]
\newcommand{\R}{\mathbb{R}}
\newcommand{\N}{\mathbb{N}}
\newcommand{\F}{\mathcal{F}}
\newcommand{\Pp}{\mathbb{P}}
\newcommand{\E}{\mathbb{E}}
\newcommand{\sP}{\mathcal{P}}
\numberwithin{proposition}{section}
\numberwithin{lemma}{section}
\newcommand{\incre}[2]{\Delta #1_{#2}}
\title{Concentration in the Generalized Chinese Restaurant Process}
\author{A. Pereira, R. I. Oliveira, R. Ribeiro}
\begin{document}

\maketitle

\begin{abstract} 
The Generalized Chinese Restaurant Process (GCRP) describes a sequence of exchangeable random partitions of the numbers $\{1,\dots,n\}$. This process is related to the Ewens sampling model in Genetics and to Bayesian nonparametric methods such as topic models. In this paper, we study the GCRP in a regime where the number of parts grows like $n^\alpha$ with $\alpha>0$. We prove a non-asymptotic concentration result for the number of parts of size $k=o(n^{\alpha/(2\alpha+4)}/(\log n)^{1/(2+\alpha)})$. In particular, we show that these random variables concentrate around $c_{k}\,V_*\,n^\alpha$ where $V_*\,n^\alpha$ is the asymptotic number of parts and $c_k\approx k^{-(1+\alpha)}$ is a positive value depending on $k$. We also obtain finite-$n$ bounds for the total number of parts. Our theorems complement asymptotic statements by Pitman and more recent results on large and moderate deviations by Favaro, Feng and Gao.
\end{abstract}

\section{Introduction}
Models of random partitions have attracted much attention in Probability and Statistics. In this paper we study a specific family of models of random partitions called {\em generalized Chinese Restaurant processes (GCRP)}. These models were introduced by Pitman \cite{pitman1995}, \cite{pitman2002} as two-parameter generaliation of Ewens' sampling formula \cite{ewens1972}. They are also important building blocks in topic models \cite{blei2003} and other Bayesian nonparametric methods \cite{craneubiq}.

The GRCP generates a sequence of random partitions $\sP_n$ of $[n]:=\{1,\dots,n\}$ for $n=1,2,3,\dots$. We focus on a specific setting for the model where the number of parts in $\sP_n$ grows like $n^{\alpha}$ for a parameter $\alpha\in (0,1)$. Our main goal is to prove concentration for the total number of the number of parts with size $k$ in each $\sP_n$, that is:
\[N_n(k):= |\{A\in \sP_n\,:\, |A|=k\}|.\]
As we explain below, the $\sP_n$ are {\em mixtures of i.i.d. models}, and the above random variables do not concentrate around any fixed value. Nevertheless, we show that they do concentrate around random values. Our main result -- Theorem \ref{thm:main} below -- shows that, for large $n$, with high probability,  
\[N_n(k) = c\,V_*\,\frac{\Gamma(k-\alpha)}{\Gamma(k+1)}\,n^\alpha + o\left(\frac{\Gamma(k-\alpha)}{\Gamma(k+1)}\,n^\alpha\right)\]
where $V_*$ is a random variable with $V_*>0$ a.s. and and $c>0$ is a constant depending on model parameters. This result holds simultaneously for all $k$ in a range that grows polynomially in $k$. Since \[\Gamma(k-\alpha)/\Gamma(k+1)=\Theta(k^{-(1+\alpha)})\mbox{ for large $k$},\] we verify that the power-law-type behavior in $k$ that is known to hold asymptotically for the $N_n(k)$ is already visible for finite $n$. Moreover, in our proof we also obtain finite-$n$ bounds on the number of parts in $\sP_n$ (cf. Theorem \ref{thm:V} below). 

Our proof method is based on martingale inequalities and is inspired by the analysis of preferential-attachment-type models \cite{chunglucomplex}. However, there are some important technical differences, which we discuss in subsection \ref{sec:proofoutline}. A salient feature of our approach is that the concentration-of-measure arguments we employ are somewhat delicate, and rely on Freedman's concentration inequality \cite{freedman1975}. 

The remainder of the paper is organized as follows. We fix some notation in the next paragraph. In section \ref{sec:themodel}, we introduce the model, discuss its regimes, and give some background on its theory and applications. Section \ref{sec:results} states our main theorems. We will also outline their proofs and compare them with previous results. Section \ref{prelim} contains the main concentration-of-meausre results we will need, including Freedman's inequality. Actual proofs start in Section \ref{numberparts} with the analysis of the number of parts in $\sP_n$. The arguments for $N_n(k)$ is more convoluted and takes four sections. Section \ref{numbersizek} gives some preliminary results, including a recursive formula. Section \ref{sec:UperXnk} obtains high-probability upper bounds and lower bounds for $N_n(k)$. The proof of our main Theorem is wrapped up in Section \ref{proofmain}. The final section contains some concluding remarks. The appendix collects several technical estimates \\

\noindent {\bf Notation:} In this paper $\N=\{1,2,3\dots\}$ is the set of positive integers. Given $n\in\N$, we let $[n]:=\{1,\dots,n\}$ denote the set of all numbers from $1$ to $n$. Given a nonempty set $S$, a {\em partition} $\sP$ of $S$ is a collection of pairwise disjoint and nonempty subsets of $S$ whose union is all of $S$. The elements of $\sP$ are called the {\em parts}. We denote the cardinality of a finite set $S$ by $|S|$. In particular, for a finite partition $\sP$, $|\sP|$ denotes the number of parts in $S$. Finally, when we talk about sequences $\{x_n\}_{n=0}^{+\infty}$ of random or deterministic values, we will write $\Delta x_n:=x_n-x_{n-1}$. 

\section{The model} \label{sec:themodel}

\subsection{Definitions}
Fix two parameters $\theta,\alpha\in\R$; extra conditions will be imposed later. GCRP($\alpha,\theta$)  -- shorthand for the Generalized Chinese Restaurant Process with parameters ($\alpha,\theta$) -- is a Markov chain
\[\sP_1,\sP_2,\sP_3,\sP_4,\dots\]
where, for each $n\in\N$, $\sP_n$ is a partition of $[n]:=\{1,\dots,n\}$. We let
\begin{equation}\label{eq:defVn}V_n:=|\sP_n|\end{equation} denote the number of parts in $\sP_n$ and write
\begin{equation}\label{eq:defPn}\sP_n = \{A_{i,n}\,:\,i=1,\dots,V_n\},\end{equation}
where the $A_{i,n}$ are the parts of $\sP_n$. In the colorful metaphor of the ``Chinese restaurant", the $A_{i,n}$ are the tables occupied by customers $1,\dots,n$, who arrive sequentially, with $V_n$ being the number of occupied tables. So $\sP_n$ describes the table arrangements of the first $n$ customers. 

The evolution of the process is as follows. 

\begin{itemize}
\item {\bf Initial state:} customer $1$ sits by herself i.e. $\sP_1=\{\{1\}\}$. 
\item {\bf Evolution:} Given $\sP_1,\dots,\sP_{n}$, with $\sP_n$ as in (\ref{eq:defPn}), we define $\sP_{n+1}$ via a random choice:
\begin{itemize}
\item For each $i=1,\dots,V_{n-1}$, with probability
\[\frac{|A_{i,n}| - \alpha}{n+\theta},\]
customer $n+1$ sits at the $i$th table. That is,
\[\sP_{n+1} = \{A_{j,n}\,:\,j\in [V_n]\backslash \{i\}\}\cup\{A_{i,n}\cup\{n+1\}\}.\]
Notice that $V_{n+1}=V_n$ in this case. 
\item With probability 
\[\frac{\alpha\,V_n + \theta}{n+\theta},\]
customer $n+1$ sits by herself at a new table. That is, we set
\[\sP_{n} = \{A_{i,n}\,:\, i=1,\dots,V_{n}\}\cup\{\{n+1\}\}.\]
In this case $V_{n+1}=V_n+1$.
\end{itemize}
\end{itemize}

Our focus in this paper is on $V_n$ and the random variables 
\begin{equation}\label{eq:defNnk}N_n(k):=|\{A\in \sP_n\,:\,|A|=k\}| = |\{i\in [V_n]\,:\,|A_{i,n}|=k\}|\,\,\,(k\in [n])\end{equation}
that count how many of the parts in $\sP_n$ have size $k$.

\subsection{Choices of parameters and different regimes}\label{sec:regimes}
The attentive reader will have noticed that the above process only makes sense for certain values of $\theta$ and $\alpha$. Specifically, there are different assumptions one can make, which lead to different behavior \cite{pitman1995,pitman2002}. 
\begin{itemize}
\item {\em Bounded number of parts}: if $\alpha<0$ and $\theta=-m\alpha$ for some $m\in\N$, then $V_n\to m$ almost surely. After $V_n$ reaches value $m$, the process behaves like an urn model with $m$ urns.
\item {\em Logarithmically growing number of parts}: if $\theta>0$, $\alpha=0$, then
\[\frac{V_n}{\log n}\to \theta\mbox{ almost surely}\]
and $V_n$ has Gaussian fluctuations at the scale of $\sqrt{\log n}$. 
\item {\em Polynomially growing number of parts}: 
if $\alpha>0$ and $\theta>-\alpha$, 
\begin{equation}\label{eq:defVo}\frac{V_n}{n^\alpha}\to V^o\mbox{ almost surely}\end{equation}
where $V^o$ is a nondegenerate random variable with a density over $(0,+\infty)$. In particular, $0<V^o<+\infty$ almost surely. 
\end{itemize}

This last regime is the focus of the present paper.

\subsection{Some background} 

We discuss here a bit of the history and applications of the GCRP. Those interested only in results may skip to the next section. 

The GCRP is an exchangeable model in the sense that the law of $\sP_n$ is invariant under permutations of $[n]$. One consequence of this is that the natural infinite limit $\sP_\infty$ of $\sP_n$ is an exchangeable random partition of the natural numbers $\N$. That is, the law of $\sP_\infty$ is invariant under any finite permutation of $\N$.  

A well-known result of Kingman \cite{kingman1978} says that exchangeable random partitions of $\N$ can always be built from mixtures of paintbox partitions. Suppose $P$ is a random probability distribution over $\N\cup\{\star\}$ where $\star\not\in\N$. Conditionally on $P$, let $\{X_i\}_{i\in\N}$ be an i.i.d.-$P$ sequence. Form a partition of $\N$ by placing each $i\in\N$ with $X_i=\star$ in a singleton, and (for each $k\in\N$) putting all $j$ with $X_j=k$ in the same part. Clearly, such a construction always leads to an exchangeable random partition, and Kingman's theorem says that this is the {\em only way} to build such partitions. In the specific case of the infinite GCRP($\alpha,\theta$), the law of $P$ is the two-parameter Poisson-Dirichlet distribution {\sc PD}($\alpha,\theta$). This can be used to derive explicit formulae for the distribution of $\sP_n$ for each $n$.

The GCRP was first mentioned in print by Aldous \cite{aldous1983}. It was studied by Pitman \cite{pitman1995}, \cite{pitman2002} as an example of a partially exchangeable model where many explicit calculations are possible. In particular, the exact distribution of the random variables $N_n(k)$ we consider can be computed explicitly. Based on these formulae, \cite{favaro2015}, \cite{favaro2018} obtained large and moderate deviation results for these variables. These results are briefly described in subsection \ref{sec:related} below.  

The class of models we consider is also important in many applications. On the one hand, it is a generalization of Ewens' neutral allele sampling model in population Genetics \cite{ewens1972}. On the other hand, the GCRP and its variants are important building blocks for topic models \cite{blei2003} and many other Bayesian nonparametric methods. We refer to Crane's recent survey \cite{craneubiq} for much more information on our model, its extensions and the many contexts where it has appeared. 

\section{Results}\label{sec:results}

Let $n\in\N$ and recall the definitions of $V_n$ and $N_n(k)$ in (\ref{eq:defVn}) and (\ref{eq:defNnk}), respectively. Our theorem describes these random variables in the setting where $\alpha\in (0,1)$ and $\theta+\alpha>0$. Recall from Section \ref{sec:regimes} that in this setting the random variables $n^{-\alpha}V_n$ have a nontrivial limit $V^o>0$ (cf. (\ref{eq:defVo})) . For our purposes, it is more convenient to work with the random variables $V_n/\phi_n$, where
\[
\phi_n:= \frac{\Gamma(1+\theta)}{\Gamma(1+\theta+\alpha)}\,\frac{\Gamma(n+\alpha+\theta)}{\Gamma(n+\theta)}.
\]
Note that $\phi_n/n^\alpha$ converges to a constant $c>0$ when $n\to +\infty$. In particular, the limit
\begin{equation}\label{eq:defV*}V^*:=\lim_{n\to +\infty}\frac{V_n}{\phi_n}\mbox{ almost surely}\end{equation}
exists and is a.s. positive (it is a rescaling of $V^o$). Our first result quantifies the convergence in this statement.

\begin{theorem}[Proven in subsection \ref{sec:proofVn}]\label{thm:V}
Consider a realization $\{\sP_n\}_{n\in\N}$ of the Generalized Chinese Restaurant Process GCRP($\alpha,\theta$) with parameters $\alpha\in (0,1)$ and $\theta>-\alpha$. Then there exist constants $K=K(\alpha,\theta)>0$ and $c_*=c_*(\alpha,\theta)>0$ such that for $\delta < e^{-K}$ the following holds with probability $\geq 1-\delta$:
\[\forall m\in\N\,:\, \left|\frac{V_m}{\phi_m} - V_*\right|\leq \frac{c_*\,[\log \log (m+2) + \log\left(\frac{1}{\delta}\right) ]}{(m+\theta)^{\alpha/2}}.\]
\end{theorem}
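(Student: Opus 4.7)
The plan is to identify $V_n/\phi_n$ as a martingale modulo a summable deterministic drift, and then apply Freedman's inequality (Section~\ref{prelim}) to control its fluctuations uniformly in $n$. Setting $M_n := V_n/\phi_n$ and using the ratio $\phi_n/\phi_{n-1}=(n-1+\alpha+\theta)/(n-1+\theta)$ together with $\E[\Delta V_n\mid\mathcal{F}_{n-1}] = (\alpha V_{n-1}+\theta)/(n-1+\theta)$, a direct computation yields
\[
\E[\Delta M_n\mid\mathcal{F}_{n-1}] \;=\; \frac{\theta}{\phi_{n-1}(n-1+\alpha+\theta)} \;=\; O(n^{-1-\alpha}).
\]
Defining the predictable compensator $A_n := \sum_{k=2}^{n}\theta/[\phi_{k-1}(k-1+\alpha+\theta)]$, the process $\tilde M_n := M_n - A_n$ is a martingale, and $A_n\to A_\infty$ with $|A_n - A_\infty| = O(n^{-\alpha})$. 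Combined with \eqref{eq:defV*}, this identifies $V_* = \tilde M_\infty + A_\infty$, so it suffices to bound $|\tilde M_m - \tilde M_\infty|$ uniformly in $m$.

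Next I would bound the martingale differences $D_n := \Delta\tilde M_n$. A short calculation shows $|D_n| \leq C/\phi_n$ and $\E[D_n^2\mid\mathcal{F}_{n-1}] \leq C'\,V_{n-1}/(n\phi_n^2)$. The variance estimate is pathwise random; on the good event $\mathcal{A} := \{\sup_m V_m/\phi_m \leq C_0\}$ it tightens to $O(n^{-1-\alpha})$, and summing from $n=m+1$ to infinity yields $\sigma_m^2 := \sum_{n>m}\E[D_n^2\mid\mathcal{F}_{n-1}] = O(m^{-\alpha})$, while $B_m := \sup_{n>m}|D_n| = O(m^{-\alpha})$. Freedman's maximal inequality applied to the martingale $n\mapsto\tilde M_n - \tilde M_m$ then gives
\[
\Pp\!\left(\sup_{n>m}|\tilde M_n - \tilde M_m| > t,\;\mathcal{A}\right) \;\leq\; 2\exp\!\left(-\frac{t^2}{2(\sigma_m^2+B_m t)}\right),
\]
and taking $t$ of order $m^{-\alpha/2}[\log(1/\delta)+\log\log m]$ makes this smaller than $\delta/m^2$.

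For the uniform-in-$m$ statement, I would apply the above at dyadic times $m_j := 2^j$ with failure probability $\delta/(2j^2)$, so that a union bound over $j$ controls all scales simultaneously with probability $\geq 1-\delta/2$. The $\log(j^2/\delta)$ inside Freedman's bound translates (via $j=\Theta(\log m)$) into the stated $\log\log(m+2)+\log(1/\delta)$ factor. Arbitrary $m\in[m_j,m_{j+1})$ are reached by summing the subsequent dyadic increments as a geometric tail, and the deterministic term $|A_m - A_\infty| = O(m^{-\alpha})$ is absorbed into the main estimate.

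The hardest step, I expect, is securing the good event $\mathcal{A}$ with $\Pp(\mathcal{A}^c) \leq \delta/2$ and a constant $C_0$ that depends at most logarithmically on $1/\delta$. Since $\tilde M_n$ is $L^2$-bounded (because $\E[V_{n-1}] = O(\phi_{n-1})$ makes $\sum_n\E[D_n^2] < \infty$), Doob and Markov yield only $C_0 = O(\delta^{-1/2})$, which would wreck the final rate. The fix is to run Freedman itself on the stopped martingale $\tilde M_{n\wedge\tau_C}$ with $\tau_C := \inf\{n:V_n>C\phi_n\}$, bootstrapping from the trivial pathwise variance bound $\sum_{k\leq n}1/\phi_k^2$ (which diverges when $\alpha\leq 1/2$, presumably the reason the theorem requires $\delta < e^{-K}$) and iterating until the natural scale $V_n\asymp\phi_n$ is reached. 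This bootstrapping is the technical heart of the argument.
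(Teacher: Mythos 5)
Your approach is essentially the paper's: write $V_n/\phi_n$ as a martingale plus a predictable drift that converges at rate $O(n^{-\alpha})$, bound the quadratic variation via the fact that $\E[(\Delta\tilde M_n)^2\mid\F_{n-1}]$ is $O\bigl((n+\theta)^{-1-\alpha}\cdot V_{n-1}/\phi_{n-1}\bigr)$, apply Freedman on the good event where $\sup_j V_j/\phi_j$ is controlled, and upgrade to a uniform-in-$m$ statement by a dyadic union bound with failure probabilities $\delta/((j+1)(j+2))$ --- all of which matches Lemma~\ref{recV}, Theorem~\ref{Vm}, and the dyadic argument of subsection~\ref{sec:proofVn}.

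Where you overcomplicate things is the final ``bootstrapping'' step. You correctly diagnose that Doob plus Markov gives only $C_0=O(\delta^{-1/2})$ and that one should instead run Freedman on the stopped process $\tilde M_{n\wedge\tau_C}$ with $\tau_C=\inf\{n:V_n>C\phi_n\}$. But no iteration is needed after that: by the very definition of $\tau_C$, on $\{j\le\tau_C\}$ one has $V_{j-1}/\phi_{j-1}\le C$ (up to a bounded additive constant), so the conditional quadratic variation of the stopped martingale is deterministically
\[
W_{n\wedge\tau_C}\ \lesssim\ C\sum_{j\ge 1}(j+\theta)^{-1-\alpha}\ =\ O(C),
\]
for every $\alpha\in(0,1)$. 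A single application of Freedman then gives $\Pp(\tau_C<\infty)\le\exp(-\Omega(C))$, and taking $C\asymp\log(1/\delta)$ closes the loop. This is exactly what the paper does with $T_A$ in the proof of Theorem~\ref{Vm}: the ``trivial'' pathwise bound $\sum_k 1/\phi_k^2$ is never used, so its divergence for $\alpha\le 1/2$ is a red herring. Relatedly, your conjectured explanation for the hypothesis $\delta<e^{-K}$ is off: the constant $K$ (see \eqref{def:K}) is there only so that $A\ge K$ makes $A$ dominate the bounded additive terms $\theta_j$ and $\theta/(\alpha\phi_{j-1})$ inside the quadratic-variation sum, ensuring $W_{n\wedge T_A}\lesssim A$ rather than $\lesssim A+O(1)$; it has nothing to do with whether $\alpha$ is above or below $1/2$.
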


Our second and main result gives concentration of the random variables $N_n(k)$ simultaneously for all $k=o(n^{\alpha/(2\alpha+4)}/(\log n)^{1 /(\alpha+2)})$.

\begin{theorem}[Main; proven in section \ref{proofmain}]\label{thm:main} Consider a realization $\{\sP_n\}_{n\in\N}$ of the Generalized Chinese Restaurant Process GCRP($\alpha,\theta$) with parameters $\alpha\in (0,1)$ and $\theta>-\alpha$. Then there exist constants $n_0=n_0(\alpha,\theta)$, $C=C(\alpha,\theta)$ such that the following holds. Assume $n\in\N$ with $n\geq n_0$. Take $A\geq 0$, $\varepsilon\in (0,1/2)$ and define $k_{\varepsilon,n}:=\lceil \varepsilon\,n^{\alpha/(2\alpha+4)}/(\log n)^{1 /(\alpha+2)}\rceil$. Then the following holds with probability $1-e^{-A}$:
\begin{align*}
\forall k\in [k_{\epsilon,n}]\,:\,\left|N_n(k) - c(\alpha,\theta)\,\frac{\Gamma(k-\alpha)}{\Gamma(k+1)}\,V_*\,n^\alpha\right| \leq  C \frac{ \Gamma(k-\alpha)}{ \Gamma(k+1)}  n^{\alpha} \varepsilon^{\alpha+2}  \left(1 + \frac{A }{\log n}\right)
\end{align*}
where
\[c(\alpha,\theta) := \frac{\alpha\,\Gamma(1+\theta)}{\Gamma(1-\alpha)\,\Gamma(1+\alpha+\theta)}>0.\]
\end{theorem}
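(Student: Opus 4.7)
The plan is to set up a recursion for $N_n(k)$, split each increment into its conditional mean plus a martingale difference, and induct on $k$, controlling the martingale part by Freedman's inequality (Section \ref{prelim}) and the mean part by the inductive hypothesis on $N_n(k-1)$. From the GCRP dynamics, conditioning on $\sP_n$ one has $\Delta N_{n+1}(k)\in\{-1,0,+1\}$ and, for $k\geq 2$,
\[\mathbb{E}[\Delta N_{n+1}(k)\mid \sP_n] = \frac{(k-1-\alpha)\,N_n(k-1) - (k-\alpha)\,N_n(k)}{n+\theta},\]
with an analogous identity for $k=1$ involving $V_n$ in place of $N_n(k-1)$. Using the identity $\frac{\Gamma(k-\alpha)}{\Gamma(k+1)}=\frac{k-1-\alpha}{k}\cdot\frac{\Gamma(k-1-\alpha)}{\Gamma(k)}$, one checks that the target $\hat N_n(k):=c(\alpha,\theta)\,\frac{\Gamma(k-\alpha)}{\Gamma(k+1)}\,V_*\,n^\alpha$ obeys the same recursion up to lower-order terms, and is thus the natural centering.

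Setting $E_n(k):=N_n(k)-\hat N_n(k)$ and writing $\Delta N_{n+1}(k)=\mathbb{E}[\Delta N_{n+1}(k)\mid\sP_n]+D_n(k)$ with $D_n(k)$ a bounded martingale difference, one obtains the linear recursion
\[E_{n+1}(k) = \left(1-\frac{k-\alpha}{n+\theta}\right)E_n(k) + \frac{k-1-\alpha}{n+\theta}\,E_n(k-1) + D_n(k) + R_n(k),\]
where $R_n(k)$ is a deterministic remainder encoding both the gap between $\Delta\hat N_{n+1}(k)$ and what the recursion would produce at $\hat N$, and the fluctuation of $V_n/\phi_n$ around $V_*$ (already controlled by Theorem \ref{thm:V}). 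Iteration yields
\[E_n(k) = \sum_{m<n} p_{m,n}(k)\left[\frac{k-1-\alpha}{m+\theta}\,E_m(k-1) + D_m(k) + R_m(k)\right],\]
with $p_{m,n}(k):=\prod_{m<j\leq n}(1-\tfrac{k-\alpha}{j+\theta})\asymp (m/n)^{k-\alpha}$. I would then induct on $k$: the base case $k=1$ uses Theorem \ref{thm:V} for the $V_n$-driven mean together with Freedman's inequality on $\sum_m p_{m,n}(1)D_m(1)$; the inductive step feeds the bound on $|E_m(k-1)|$ into the first term and handles $\sum_m p_{m,n}(k)D_m(k)$ again by Freedman's inequality, using the a.s. bound $|D_m(k)|\leq 2$ and the conditional variance $\mathbb{E}[D_m(k)^2\mid\sP_m]=O((N_m(k-1)+N_m(k))/m)$. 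The preliminary upper bounds on $N_m(k)$ obtained in Section \ref{sec:UperXnk} convert this into a deterministic envelope on a high-probability event, which is exactly what Freedman requires.

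The main obstacle, and the source of the specific threshold $k_{\varepsilon,n}\asymp n^{\alpha/(2\alpha+4)}/(\log n)^{1/(\alpha+2)}$, is the three-way tension between: (i) the propagation of error along the induction on $k$, each step contributing factors depending polynomially on $k$ through $p_{m,n}(k)$ and $(k-1-\alpha)/(m+\theta)$; (ii) the variance term inside Freedman's bound, which itself grows with $k$ through $N_m(k-1)$; and (iii) the union bound over $k\in[k_{\varepsilon,n}]$, which forces the Freedman deviation parameter to absorb an extra $\log n$. Because the centering itself scales as $\hat N_n(k)\asymp k^{-(1+\alpha)}n^\alpha$, the aggregate error must remain much smaller than this prefactor, and balancing the three constraints is what pins down the exponent $\alpha/(2\alpha+4)$. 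The bulk of the technical work is to execute these estimates so that the final bound carries the clean factor $\varepsilon^{\alpha+2}(1+A/\log n)$ stated in the theorem, and in particular to ensure that $R_m(k)$, whose size is tied to the convergence rate supplied by Theorem \ref{thm:V}, is indeed of the required order.
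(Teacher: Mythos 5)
Your proposal follows the paper's strategy in all essentials: the same conditional-increment identity coupling sizes $k-1$ and $k$, a martingale-difference decomposition, an induction on $k$, and Freedman's inequality at each level. The parametrizations are in fact equivalent: dividing your iterated recursion by $\psi_n(k)=\prod_{j<n}\bigl(1-\tfrac{k-\alpha}{j+\theta}\bigr)$ (your $p_{0,n}(k)$) recovers the paper's recursion for $X_n(k)=N_n(k)/\psi_n(k)$, and your propagator $p_{m,n}(k)$ is exactly $\psi_n(k)/\psi_m(k)$. The one organizational difference is that you center directly around $\hat N_n(k)=c(\alpha,\theta)\tfrac{\Gamma(k-\alpha)}{\Gamma(k+1)}V_*n^\alpha$, whereas the paper first proves $X_n(k)\approx a_0(k)V_*n^k$ and only at the end converts $a_0(k)\,n^k\,\psi_n(k)$ into $c(\alpha,\theta)\tfrac{\Gamma(k-\alpha)}{\Gamma(k+1)}n^\alpha\bigl(1+O(k^2/n)\bigr)$ via a Stirling estimate on $\Gamma(n-k+\alpha+\theta)/\Gamma(n+\theta)$.

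There is, however, a circularity in your variance control that must be repaired. You write $\E[D_m(k)^2\mid\sP_m]=O\bigl((N_m(k-1)+N_m(k))/m\bigr)$ and then propose to feed in ``preliminary upper bounds on $N_m(k)$'' to get a deterministic envelope for Freedman --- but at the moment you apply Freedman at level $k$, the inductive hypothesis controls only $N_m(k-1)$, and a high-probability bound on $N_m(k)$ is precisely what you are trying to establish. Relatedly, your $R_n(k)$ is not deterministic: it contains $V_*$ for all $k$, and for $k=1$ also the source term $\alpha V_n/(n+\theta)$. The paper resolves both points at once by using the crude almost-sure bound $N_m(k)\leq V_m$ and then invoking the uniform tail bound $\Pp(\sup_j V_j/\phi_j\geq A)\leq e^{-c_V A}$ from Theorem \ref{Vm}; this converts the conditional quadratic variation into a deterministic envelope on a high-probability event without appealing to any bound on $N_m(k)$ itself, which is the ingredient your outline needs to make the Freedman step non-circular.
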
 
The following immediate corollary is perhaps somewhat easier to parse. 
\begin{corollary}In the setting of Theorem \ref{thm:main}, let $\varepsilon=\varepsilon_n\to 0$ with $n$. Then there exist sequences $C_n\to +\infty$, $\xi_n\to 0$ such that the the probability that for some we have
\[\Pp\left(\forall k\in [k_{\varepsilon,n}]\,:\,V_*-\xi_n<\frac{N_n(k)}{c(\alpha,\theta)\,\frac{\Gamma(k-\alpha)}{\Gamma(k+1)}\,n^\alpha} < V_*+  \xi_n\right)\geq 1 - n^{-C_n}, \] 
for large enough $n\in\N$.
\end{corollary}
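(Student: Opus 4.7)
The plan is to reduce the corollary directly to Theorem \ref{thm:main} by choosing the free parameter $A$ appropriately as a function of $n$. First I would divide the bound in Theorem \ref{thm:main} through by $c(\alpha,\theta)\,\frac{\Gamma(k-\alpha)}{\Gamma(k+1)}\,n^\alpha$ to rewrite it in the form
\[\left|\frac{N_n(k)}{c(\alpha,\theta)\,\frac{\Gamma(k-\alpha)}{\Gamma(k+1)}\,n^\alpha} - V_*\right| \leq \frac{C}{c(\alpha,\theta)}\,\varepsilon_n^{\alpha+2}\left(1+\frac{A}{\log n}\right),\]
valid simultaneously for all $k\in [k_{\varepsilon_n,n}]$ with probability at least $1-e^{-A}$, provided $n\geq n_0$ and $\varepsilon_n<1/2$ (both of which hold for $n$ large, since $\varepsilon_n\to 0$).

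Next I would set $A=A_n:=C_n\log n$, so that the failure probability becomes exactly $e^{-C_n\log n}=n^{-C_n}$, matching the form requested in the corollary. The remaining task is to select $C_n\to +\infty$ for which the right-hand side above still vanishes. Since $\varepsilon_n\to 0$ implies $\varepsilon_n^{-(\alpha+2)}\to +\infty$, any sequence with $C_n\to +\infty$ and $C_n=o(\varepsilon_n^{-(\alpha+2)})$ will do. A concrete choice is
\[C_n:=\max\{1,\lfloor \varepsilon_n^{-(\alpha+2)/2}\rfloor\},\]
which tends to infinity and satisfies $\varepsilon_n^{\alpha+2}C_n\leq \varepsilon_n^{(\alpha+2)/2}\to 0$. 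Defining
\[\xi_n:=\frac{C}{c(\alpha,\theta)}\,\varepsilon_n^{\alpha+2}(1+C_n)+\frac{1}{n}\]
then finishes the argument: $\xi_n\to 0$, and on the event of probability at least $1-n^{-C_n}$ provided by Theorem~\ref{thm:main} with $A=A_n$, the two-sided strict inequality demanded by the corollary holds for every $k\in [k_{\varepsilon_n,n}]$. The additive $1/n$ is only there to upgrade the non-strict bound from Theorem~\ref{thm:main} to the strict one in the conclusion.

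There is no substantive obstacle: the corollary is a cosmetic repackaging of Theorem \ref{thm:main}, trading the tunable parameter $A$ for an explicit polynomial tail $n^{-C_n}$. The only point worth checking is the compatibility of the two requirements $C_n\to +\infty$ (to make the probability bound nontrivial) and $\varepsilon_n^{\alpha+2}(1+C_n)\to 0$ (to make the accuracy bound nontrivial), and the calculation above shows that the hypothesis $\varepsilon_n\to 0$ provides exactly the slack needed to satisfy both.
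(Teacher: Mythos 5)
Your proof is correct and follows essentially the same route as the paper's own proof sketch: apply Theorem \ref{thm:main} with $A = C_n \log n$ for a sequence $C_n \to +\infty$ with $\varepsilon_n^{\alpha+2} C_n \to 0$, and set $\xi_n$ accordingly. Your version is slightly more explicit in nominating a concrete $C_n$ and in patching the strict-versus-non-strict inequality with the harmless $+1/n$, but these are cosmetic refinements of the same argument.
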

\begin{proof} [Proof sketch] Apply Theorem \ref{thm:main} with $A=C_n\,\log n$, where $C_n\to +\infty$ but $\varepsilon_n^{\alpha+2}\,C_n\to 0$. Then take:
\[\xi_n = \frac{C}{c(\alpha,\theta)}\,\varepsilon_n^{\alpha+2}\,(1 + C_n).\]
\end{proof}

\subsection{Related work}\label{sec:related}

One consequence of our results is the a.s. asymptotics for $N_n(k)/V_n$:
\[\frac{N_n(k)}{V_n}\to c(\alpha,\theta)\,\frac{\Gamma(k-\alpha)}{\Gamma(k+1)}.\]
This kind of Law of Large Numbers was first obtained by Pitman \cite[Chapter 3]{pitman2002} with no explicit convergence rates. 

Much more recently, Favaro, Feng and Gao \cite{favaro2015,favaro2018} have used Pitman's explicit formulae to obtain large and moderate deviation results for the $N_n(k)$. Reference \cite{favaro2018}, which is the closest to our work, focuses on precise estimates for probabilities like
\begin{equation}\label{eq:favaromoderate}\Pp\left(\frac{N_n(k)}{n^\alpha\,\beta_n}>c\right)\mbox{ when }\beta_n\gg (\log n)^{1-\alpha}.\end{equation} The paper \cite{favaro2015} considers even larger sequences $\beta_n$. By contrast, we obtain finite-$n$ estimates for deviations at smaller scales, which (as expected) are not as precise. There is also a difference in proof methods: whereas they rely on explicit formulae, our argument is based on recursions and martingales. 

Another important conceptual difference between our work and that of Favaro et al. is that, for their purposes, the lack of concentration in $V_n/\phi_n$ is not an issue. Indeed, if one goes ``deep enough"~into the tail of the $N_n(k)$, as in (\ref{eq:favaromoderate}), the nontrivial distribution of $V_*=\lim V_n/\phi_n$ becomes irrelevant. Our theorems operate at a finer scale and complement these previous papers by giving tail bounds for $V_*$ and $\sup_n V_n/\phi_n$ matter (cf. Theorem \ref{Vm}). As a result, we find in Theorem \ref{thm:main} that the sequence $\{N_n(k)\}_{k}$ is essentially a deterministic function of $V_*$. 

\subsection{Proof outline}\label{sec:proofoutline}

The general methodology in our proof is based in the study of degree distributions in preferential attachment random graphs, as in the book by Chung and Lu \cite[Chapter 3]{chunglucomplex}. However, a new phenomenon arises. In the graph setting, the total number of vertices at time $n$ is usually linear $n$ (at least with high probability). By contrast, the analogue of the total number of vertices is $V_n$ -- the number of parts --, which is sublinear and not concentrated. 

One consequence of this point in our analysis is that the martingale arguments are much more delicate, and rely on Friedman's martingale inequality (cf. section \ref{prelim}), instead of the more usual (and less precise) Azuma-H\"{o}ffding bound. Another point is that we must first obtain results on the number of parts $V_n$, which we do in section \ref{numberparts}. 

We then consider the random variables $N_n(k)$. The general strategy is to write these variables in terms of ``recursions + martingales"~depending on $N_{n-1}(i)$ for $i=k-1,k$, and then observe how the ``martingale"~part concentrates. These first steps, which are taken in section \ref{numbersizek}, are similar to the analysis in \cite[Chapter 3]{chunglucomplex}. However, the results obtained are not directly employable to prove the main theorem. Section \ref{sec:UperXnk} then turns these arguments into actionable bounds. This leads to the proof of the main result in section \ref{proofmain}.

\section{Concentration inequalities}  \label{prelim}
We recall here Freedman's inequality and a particular corollary that will be important to our proofs. 
\begin{theorem}[Freedman's Inequality \cite{freedman1975}]
Let $(M_n, \F_n)_{n\geq 1}$ be a martingale with $M_0=0$ and $R>0$ a constant. Write
\begin{align*}
W_n:= \sum_{k=2}^{n} \mathbb{E}[(\Delta M_{j})^2 |\F_j].
\end{align*}
Suppose
$$ |\Delta M_{j}| \leq R, \ \ \text{for all } j.$$
Then, for all $\lambda >0$ we have
$$ \Pp(M_n \geq \lambda, W_n \leq \sigma^2) \leq \exp \left(\frac{-\lambda^2}{2\sigma + 2R\lambda/3} \right).$$
\end{theorem}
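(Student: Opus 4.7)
The plan is to prove Freedman's inequality via the classical exponential supermartingale technique, adapted to handle the conditional quadratic variation $W_n$ rather than a deterministic variance bound. First I would fix $\theta>0$ and define
\[
Z_n(\theta) := \exp\bigl(\theta M_n - g(\theta)\,W_n\bigr),\qquad g(\theta):=\frac{e^{\theta R}-1-\theta R}{R^2},
\]
with $Z_0=1$. The first key step is to show $(Z_n(\theta))_{n\geq 0}$ is a nonnegative supermartingale with respect to $(\F_n)$. This reduces to showing that for each $j$,
\[
\E\!\left[e^{\theta\,\Delta M_j}\,\big|\,\F_{j-1}\right] \leq \exp\!\bigl(g(\theta)\,\E[(\Delta M_j)^2\mid \F_{j-1}]\bigr).
\]

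To prove this moment-generating-function bound, I would use the fact that the map $x\mapsto (e^x-1-x)/x^2$ is nondecreasing on $\R$ (with value $1/2$ at the origin). Since $\theta\,\Delta M_j\leq \theta R$ almost surely, one obtains the pointwise inequality
\[
e^{\theta\,\Delta M_j}\leq 1+\theta\,\Delta M_j + g(\theta)\,(\Delta M_j)^2.
\]
Taking conditional expectation, using $\E[\Delta M_j\mid \F_{j-1}]=0$, and then $1+x\leq e^x$, delivers the bound above, hence the supermartingale property of $Z_n(\theta)$.

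Next I would combine this with a Markov/Chernoff argument on the event of interest. On $\{M_n\geq \lambda,\,W_n\leq \sigma^2\}$ we have $Z_n(\theta)\geq \exp(\theta\lambda - g(\theta)\sigma^2)$, so
\[
\Pp(M_n\geq \lambda,\,W_n\leq \sigma^2) \leq e^{-\theta\lambda + g(\theta)\sigma^2}\,\E[Z_n(\theta)] \leq e^{-\theta\lambda + g(\theta)\sigma^2}
\]
by the supermartingale property. The final step is the optimization over $\theta>0$: using the elementary bound $g(\theta)\leq \theta^2/\bigl(2(1-\theta R/3)\bigr)$ valid for $0\leq \theta R<3$, and choosing $\theta = \lambda/(\sigma^2 + R\lambda/3)$ (which lies in the admissible range), yields the claimed
\[
\Pp(M_n\geq \lambda,\,W_n\leq \sigma^2)\leq \exp\!\left(\frac{-\lambda^2}{2\sigma^2+2R\lambda/3}\right).
\]

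The main technical obstacle is the MGF estimate: one must carry the random, $\F_{j-1}$-measurable quantity $\E[(\Delta M_j)^2\mid\F_{j-1}]$ through the exponent rather than a deterministic variance, which is precisely what forces the introduction of $W_n$ inside $Z_n(\theta)$ and makes the Chernoff step possible only on the joint event $\{M_n\geq \lambda, W_n\leq \sigma^2\}$. Everything else is then routine calculus and an optimization, with the only subtlety being to verify that the minimizer $\theta$ satisfies $\theta R < 3$ so that the Bennett-type bound on $g(\theta)$ remains in force.
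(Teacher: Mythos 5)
Your proof is correct and uses the standard technique for Freedman's inequality: the exponential supermartingale $Z_n(\theta)=\exp(\theta M_n - g(\theta)W_n)$ with the Bennett-type MGF bound, followed by a stopping/Chernoff argument and the Bernstein optimization over $\theta$. The paper cites Freedman's inequality without providing a proof, so there is no paper argument to compare against; the exponential-supermartingale route you take is the canonical one (it is also essentially Freedman's original).

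One remark worth making explicit: your derivation yields the denominator $2\sigma^2 + 2R\lambda/3$, whereas the statement as printed in the paper has $2\sigma + 2R\lambda/3$. Your version is the correct one, and the paper's statement contains a typo. This is confirmed by the way the inequality is invoked in the paper's Lemma \ref{aux}, where the substitution $\sigma^2 \mapsto c_1 R^2\lambda$, $\lambda \mapsto R\lambda$ produces the denominator $2c_1 R^2\lambda + \tfrac{2R}{3}(R\lambda)$, i.e., $2\sigma^2 + 2R\lambda/3$. (Similarly, the definition of $W_n$ in the paper's statement has a mismatched index and conditioning $\sigma$-algebra; it should read $W_n = \sum_{j=2}^{n}\E[(\Delta M_j)^2\mid\F_{j-1}]$, consistent with how $W_n$ is computed later in the paper.) All intermediate steps in your argument check out: the monotonicity of $x\mapsto (e^x-1-x)/x^2$ gives the pointwise bound on $e^{\theta\Delta M_j}$, the conditional expectation step uses the martingale property and $1+x\leq e^x$, the supermartingale gives $\E[Z_n(\theta)]\leq 1$ since $M_0=0$ and $W_0=0$, and the choice $\theta = \lambda/(\sigma^2+R\lambda/3)$ indeed satisfies $\theta R<3$ and attains the stated exponent.
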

The lemma below is a straightforward consequence of Freedman's inequality. Since we will deal with the problem of bounding martingales under some constraints frequently, it will be convenient to have this precise statement.
\begin{lemma} \label{aux}
	Let $M_j$ is a martingale and $R>0$ a constant such that, $M_0=0$,  $|M_{j+1}-M_j| \leq R \ \ \forall j \leq n$ and $W_n$ is its quadratic variation, then for any constant $c_1>0$ we have
	\begin{align*}
	\Pp \left( |M_n| \geq R \lambda \right) \leq 2\exp\left( \frac{-\lambda}{2c_1+\frac{2}{3}} \right) + \Pp\left(W_n \geq c_1R^2 \lambda \right).
	\end{align*}
	\end{lemma}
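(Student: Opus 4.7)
The plan is to decompose the event $\{|M_n|\geq R\lambda\}$ according to whether the quadratic variation $W_n$ stays under the threshold $c_1R^2\lambda$ or exceeds it. For any event $\mathcal{E}$,
\[
\Pp(\mathcal{E}) \leq \Pp\bigl(\mathcal{E}\cap\{W_n\leq c_1R^2\lambda\}\bigr) + \Pp\bigl(W_n\geq c_1R^2\lambda\bigr),
\]
so it suffices to bound the first term on the right by $2\exp(-\lambda/(2c_1+2/3))$.

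For the positive tail, I would apply Freedman's inequality to $(M_j)$ with deviation parameter $R\lambda$ and variance threshold $\sigma^2 := c_1R^2\lambda$. Since $|\Delta M_j|\leq R$, Freedman's bound yields
\[
\Pp\bigl(M_n\geq R\lambda,\; W_n\leq c_1R^2\lambda\bigr)
\leq \exp\left(\frac{-(R\lambda)^2}{2c_1R^2\lambda + 2R\cdot R\lambda/3}\right)
= \exp\left(\frac{-\lambda}{2c_1+2/3}\right),
\]
where the $R^2\lambda$ factors in numerator and denominator cancel cleanly. For the negative tail, the same bound applies to $(-M_j)_{j\geq 0}$, which is again a martingale with $M_0=0$, the same bounded increments, and the same conditional quadratic variation $W_n$. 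A union bound over the two tails then contributes the factor of $2$.

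There is no real obstacle here; the only thing to be careful about is matching the roles of the two parameters called ``$\lambda$'' (the Freedman deviation and the lemma's scaling parameter) so that the cancellation in the exponent is correct, and noting that on the conditioning event $\{W_n\leq c_1R^2\lambda\}$ one may apply Freedman with variance bound $\sigma^2=c_1R^2\lambda$ rather than with the a.s. bound on $W_n$ (which is not available). Combining the two tail estimates with the initial decomposition gives the stated inequality.
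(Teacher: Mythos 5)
Your proof is correct and follows essentially the same route as the paper: decompose on whether $W_n$ exceeds the threshold, apply Freedman's inequality (with $\sigma^2 = c_1 R^2\lambda$) to $M_n$ and $-M_n$, and combine via union bound. The algebraic cancellation you carry out to get $\exp(-\lambda/(2c_1+2/3))$ matches the paper's computation exactly.
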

	
\begin{proof}
	It follows of the union bound and Freedman's inequality to the martingales $M_j$ and $-M_j$:
	\begin{align*}
	\Pp \left( |M_n| \geq R \lambda \right) &\leq \Pp \left( |M_n| \geq R \lambda, W_n \leq c_1 R^2 \lambda \right) + \Pp\left(W_n \geq R^2 \lambda \right) \\
	&\leq 2\exp\left( \frac{-(R \lambda)^2}{2c_1R^2\lambda+\frac{2R}{3}(R \lambda)} \right) + \Pp\left(W_n \geq c_1R^2 \lambda \right) \\
	&\leq 2\exp\left( \frac{-\lambda}{2c_1+\frac{2}{3}} \right) + \Pp\left(W_n \geq c_1R^2 \lambda \right).
	\end{align*}
\end{proof}

\section{Estimates on the number of parts} \label{numberparts}
In this section we obtain results on the number of parts $V_n$ of $\mathcal{P}_n$. In particular, we prove Theorem \ref{thm:V} above. 

In subsection \ref{sec:recurrenceVn} we prove a recurrence relation for $V_n$. We use this in subsection  \ref{sec:concentrationVn} to derive concentration for the whole sequence. Finally subsection \ref{sec:proofVn} proves Theorem \ref{thm:main}.

The following normalizing factor will appear in our proofs:
\begin{equation}\label{def:phi}
\phi_n := \prod_{j=1}^{n-1}  \left( 1 + \frac{\alpha}{j+\theta} \right) = \frac{\Gamma(1+\theta)}{\Gamma(1+\theta+\alpha)} \frac{\Gamma(n+\alpha+\theta)}{\Gamma(n+\theta)}.
\end{equation}
Note that by Lemma \ref{Ord:phin} we have $\phi_n = \Theta(n^{\alpha})$.

\subsection{A recurrence relation}\label{sec:recurrenceVn}
The first result in this section is the following Lemma.
\begin{lemma}[Recurrence relation for $V_n$]\label{recV} For all $n,m \in \N$ the recurrence relation holds
	\begin{equation}
	\frac{V_{n}}{\phi_{n}} = \frac{V_{m}}{\phi_{m}} + (M_n - M_m) + \frac{O(1)}{(m+\theta)^{\alpha}},
	\end{equation}
	where $(M_n, \F_n)$ is a martingale satisfying $M_0=0$, 
	\begin{enumerate}
		\item $|\Delta M_{j}| \leq \frac{2\Gamma(1+\theta+\alpha)}{\Gamma(1+\theta) \cdot (1+\theta)^{\alpha}}$;
		
		\item $\displaystyle \mathbb{E}[(\Delta M_j)^2|\F_{j-1}] \leq \frac{2\Gamma(1+\theta+\alpha) \alpha}{\Gamma(1+\theta) } \cdot (j+\theta)^{-\alpha-1} \left(\frac{V_{j-1} +\frac{\theta}{\alpha}}{\phi_{j-1}} \right)$,
	\end{enumerate}
	for all $j \in \N$.
\end{lemma}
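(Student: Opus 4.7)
I would work with the normalized process $U_n := V_n/\phi_n$ and show that its conditional drift is both deterministic and summable. Conditionally on $\F_{n-1}$, the increment $V_n - V_{n-1}$ is a Bernoulli variable $B_n$ with parameter $p_n := (\alpha V_{n-1}+\theta)/(n-1+\theta)$. The factor $\phi_n$ is cooked up precisely so that $\phi_n/\phi_{n-1} = 1 + \alpha/(n-1+\theta)$ absorbs the ``multiplicative part'' of the drift; explicit algebra then yields the crucial identity
\[
\E[U_n \mid \F_{n-1}] \;=\; \frac{V_{n-1}+p_n}{\phi_n} \;=\; V_{n-1}\cdot\frac{1+\alpha/(n-1+\theta)}{\phi_n} + \frac{\theta}{\phi_n(n-1+\theta)} \;=\; U_{n-1} + \frac{\theta}{\phi_n(n-1+\theta)}.
\]
Note that the residual drift is purely deterministic and does not depend on $V_{n-1}$.

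Next I would define the compensated partial sums
\[
M_n \;:=\; \sum_{j=2}^{n}\bigl(U_j - \E[U_j\mid \F_{j-1}]\bigr),\qquad n\geq 1,
\]
with $M_0 = M_1 = 0$. This is a martingale by construction. Telescoping the identity $U_j - U_{j-1} = \Delta M_j + \theta/(\phi_j(j-1+\theta))$ from $j=m+1$ up to $n$ gives
\[
\frac{V_n}{\phi_n} \;=\; \frac{V_m}{\phi_m} + (M_n - M_m) + \theta\sum_{j=m+1}^{n}\frac{1}{\phi_j(j-1+\theta)}.
\]
By standard Gamma-function asymptotics, $\phi_j = \Theta(j^\alpha)$, so the deterministic remainder is bounded by a constant times $\sum_{j>m} j^{-1-\alpha} = O((m+\theta)^{-\alpha})$, giving the stated $O(1)/(m+\theta)^\alpha$ term.

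Finally I would verify the two bounds on $M$. From the explicit formula $\Delta M_j = (B_j - p_j)/\phi_j$ with $B_j \in\{0,1\}$ and $p_j\in[0,1]$ one has $|\Delta M_j| \leq 1/\phi_j \leq 1/\phi_1 = 1$, which fits comfortably inside the uniform bound in item (1), since $\Gamma(1+\theta+\alpha)/(\Gamma(1+\theta)(1+\theta)^\alpha)$ is of constant order and the factor $2$ absorbs any slack. For item (2), the Bernoulli variance identity gives
\[
\E\bigl[(\Delta M_j)^2 \bigm| \F_{j-1}\bigr] \;=\; \frac{p_j(1-p_j)}{\phi_j^2} \;\leq\; \frac{\alpha V_{j-1}+\theta}{\phi_j^2(j-1+\theta)} \;=\; \frac{\alpha}{\phi_j(j-1+\theta)}\cdot\frac{V_{j-1}+\theta/\alpha}{\phi_j}.
\]
Using $\phi_j = (1+\alpha/(j-1+\theta))\phi_{j-1}\geq \phi_{j-1}$ to replace the second $\phi_j$, and the asymptotic $\phi_j = \Theta((j+\theta)^\alpha)$ to turn the first factor into $C(j+\theta)^{-1-\alpha}$, produces the stated form with explicit Gamma constants.

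The only real difficulty I foresee is bookkeeping: tracking the Gamma-function constants so that they match items (1) and (2) literally, rather than merely up to a constant. There is no serious probabilistic or analytic subtlety --- the entire argument rests on the single observation that dividing by $\phi_n$ makes the multiplicative part of the drift cancel, leaving only a deterministic $O(n^{-1-\alpha})$ residual that sums to $O(m^{-\alpha})$.
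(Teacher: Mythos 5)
Your proposal is correct and follows essentially the same route as the paper: your $\Delta M_j = U_j - \E[U_j\mid\F_{j-1}]$ is precisely the paper's $\zeta_j = (\Delta V_j - p_j)/\phi_j$, the telescoping and the $O((m+\theta)^{-\alpha})$ bound on the deterministic drift match, and the increment and variance bounds are obtained via the same estimate $1/\phi_j \leq 2\Gamma(1+\theta+\alpha)/(\Gamma(1+\theta)(j+\theta)^\alpha)$. The only superficial difference is that you derive the recurrence by computing $\E[U_n\mid\F_{n-1}]$ directly rather than by algebraically rearranging the update for $V_n$ and then dividing by $\phi_n$, which is a presentational rather than substantive change.
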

\begin{proof} Recall $\incre{V}{n} = V_n - V_{n-1}$. On the other hand, we also know that
\begin{equation}\label{eq:incrementV} 
\Pp \left( \incre{V}{n} = 1 \middle | \F_{n-1}\right) = \E \left[\incre{V}{n} \middle | \F_{n-1} \right] = \frac{\alpha V_{n-1} + \theta}{n-1+\theta}.
\end{equation}
In other words, conditioned on $\F_{n-1}$, the random variable $\incre{V}{n}$ is distributed as $\text{ Be}\left(\frac{\alpha V_{n-1} + \theta}{n-1+\theta} \right)$. In order to obtain mean zero martingale, it will be useful to centralize the random variable~$\incre{V}{n}$. Thus we may write $V_{n}$ as 
\begin{equation}
	\begin{split}
		V_{n} &= V_{n-1} + \Delta V_{n} \\
		V_n& = \left(1+\frac{\alpha}{n-1+\theta} \right)V_{n-1} + \left(\Delta V_{n} - \frac{\alpha V_{n-1} + \theta}{n-1+\theta} \right) + \frac{\theta}{n-1+\theta}.
	\end{split}
\end{equation}
Thus, dividing the above identity by $\phi_n$, we obtain
\begin{equation}
\begin{split}
\frac{V_{n}}{\phi_{n}} &= \frac{V_{n-1}}{\phi_{n-1}} + \zeta_{n} + \frac{\theta}{(n-1+\theta)\phi_n},
\end{split}
\end{equation}
where 
\begin{equation}
\zeta_n := \frac{\incre{V}{n} -\frac{\alpha V_{n-1} + \theta}{n-1+\theta} }{\phi_{n}}.
\end{equation}
Observe that
\begin{equation}
\label{meanZeroZeta} \mathbb{E}[\zeta_n|\F_{n}] =0.
\end{equation}
Iterating this argument $n-m$ steps leads to
\begin{equation}\label{recVphi}
\frac{V_{n}}{\phi_{n}} = \frac{V_{m}}{\phi_{m}} + (M_n - M_m) + (\theta_n-\theta_m),
\end{equation}
where
\begin{equation}
M_n := \sum_{j=2}^{n} \zeta_j \text{ and }\theta_n = 1+ \sum_{j=1}^{n-1}\frac{\theta}{(j+\theta)\phi_{j+1}}.
\end{equation}
Notice that identity (\ref{meanZeroZeta}) implies that $M_n$ is a zero mean martingale.

Now we estimate the order of the deterministic contribution of $\theta_n - \theta_m$ on identity (\ref{recVphi}). By Lemma \ref{Ord:phin}, the following upper bound holds 
\begin{equation}
\frac{1}{(j+\theta) \phi_{j+1}} < \frac{2\Gamma(1+\theta+\alpha)}{\Gamma(1+\theta) \cdot (j+\theta)^{1+\alpha}}.
\end{equation}
Thus, bounding the sum by the integral, we obtain
\begin{equation}\label{ineq:bound1phi}
\theta_n-\theta_m = \sum_{j=m}^{n-1}\frac{\theta}{\phi_{j+1}(j+\theta)} \leq \frac{4 \Gamma(1+\theta+\alpha) \theta}{\alpha \Gamma(1+\theta)} \frac{1}{(m+\theta)^{\alpha}}.
\end{equation}
which proves the first statement of the lemma.

In the remainder of the proof we estimate the increments of the martingale $M_n$ as well as its conditioned quadratic variation. By the definition of $M_j$ and recalling that $\Delta V_j$ is at most one and the bound on (\ref{ineq:bound1phi}) we obtain that 
\begin{equation}
|\Delta M_j| \leq \dfrac{1}{\phi_j} \le \frac{2\Gamma(1+\theta+\alpha)}{\Gamma(1+\theta) \cdot (j+\theta)^{\alpha}}
\end{equation} and also
\begin{equation}
\begin{split}
\mathbb{E}[(\Delta M_{j})^2|\F_{j-1}] &\leq \frac{\alpha}{(j-1+\theta)\phi_{j}} \frac{\phi_{j-1}}{\phi_{j}} \frac{V_{j-1} +\frac{\theta}{\alpha}}{\phi_{j-1}}\\
&\leq \frac{2\Gamma(1+\theta+\alpha) \alpha}{\Gamma(1+\theta) } \cdot (j+\theta)^{-\alpha-1} \frac{V_{j-1} +\frac{\theta}{\alpha}}{\phi_{j-1}},
\end{split}
\end{equation}
which proves the lemma.
\end{proof}

\subsection{Concentration and tail bounds}\label{sec:concentrationVn}
We combine the recurrence relation we have proven with Freedman's inequality to obtain the following theorem
\begin{theorem} \label{Vm}
	In the $(\alpha, \theta)$-GCRP there are constants $K=K(\alpha,\theta)>0$ and $c_V=c_V(\alpha,\theta)>0$ such that for all $m \geq 0$ integer and $A \geq K$ we have 
	$$ \Pp\left( \sup_{j \geq m} \left(\frac{V_j}{\phi_j} - \frac{V_m}{\phi_m} \right) \geq  \frac{A}{(m+\theta)^{\alpha/2}} \right)\leq \exp (-c_VA).$$
	In particular, for $m=0$, considering $V_0=0$ and $\phi_0=1$ we have
	$$ \Pp \left[ \sup_{j \in \N} \left( \frac{V_j}{\phi_j}\right) \geq A \right] \leq \exp (-c_VA).$$
\end{theorem}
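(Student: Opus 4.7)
The plan is to combine the martingale decomposition from Lemma \ref{recV} with a stopping-time bootstrap of Freedman's inequality. By Lemma \ref{recV}, for $n \geq m$,
\[
\frac{V_n}{\phi_n} - \frac{V_m}{\phi_m} \;=\; (M_n - M_m) + (\theta_n - \theta_m), \qquad |\theta_n - \theta_m| \leq \frac{C_0}{(m+\theta)^{\alpha}} \leq \frac{C_0}{(m+\theta)^{\alpha/2}},
\]
so for $A$ larger than a suitable constant $K_0=K_0(\alpha,\theta)$ the deterministic error is absorbed into $\tfrac{A}{2}(m+\theta)^{-\alpha/2}$. It therefore suffices to control $\sup_{n\geq m}(M_n - M_m)$ at that scale.

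The key obstruction is that the conditional variance bound in Lemma \ref{recV} involves $V_{j-1}/\phi_{j-1}$, exactly the quantity we want to bound. I would resolve this circularity via the stopping time $\tau_A := \inf\{j \geq 1 : V_j/\phi_j > A\}$ and work with the stopped martingale $\widetilde{M}_n := M_{n\wedge\tau_A}$. On $\{\tau_A \geq j\}$ one has $V_{j-1}/\phi_{j-1}\leq A$, so Lemma \ref{recV} yields
\[
\widetilde{W}_n - \widetilde{W}_m \;\leq\; C_1 (A+\theta/\alpha) \sum_{j=m+1}^{\infty} (j+\theta)^{-\alpha-1} \;\leq\; C_2 (A+\theta/\alpha)(m+\theta)^{-\alpha},
\]
together with $|\Delta\widetilde{M}_j|\leq C_3(m+\theta)^{-\alpha}$ for $j>m$.

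Applying Freedman's inequality in its supremum form (obtained from the statement in Section \ref{prelim} by a standard stopping-time argument, exactly in the spirit of Lemma \ref{aux}) with $\lambda = \tfrac{A}{2}(m+\theta)^{-\alpha/2}$, $\sigma^2 = C_2(A+\theta/\alpha)(m+\theta)^{-\alpha}$, and $R=C_3(m+\theta)^{-\alpha}$, and multiplying numerator and denominator of the Freedman exponent by $(m+\theta)^{\alpha}$, one sees that the exponent equals
\[
\frac{\lambda^2}{2\sigma^2 + \tfrac{2}{3}R\lambda} \;\geq\; \frac{A^2/4}{2C_2(A+\theta/\alpha) + C_3 A/(3(m+\theta)^{\alpha/2})} \;=\; \Omega(A),
\]
uniformly in $m$ as soon as $A\geq K_1$. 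Hence $\Pp(\sup_{n\geq m}(\widetilde{M}_n - \widetilde{M}_m) \geq \tfrac{A}{2}(m+\theta)^{-\alpha/2}) \leq e^{-c_1 A}$. An essentially identical computation applied to $\widetilde{M}$ starting from $0$ --- using that $\{\tau_A<\infty\}$ forces $\widetilde{M}_{\tau_A} \geq A - \sup_j |\theta_j|$, while the total quadratic variation of $\widetilde{M}$ is bounded by $C_4(A+\theta/\alpha)$ --- gives $\Pp(\tau_A < \infty) \leq e^{-c_2 A}$, which is already the $m=0$ assertion of the theorem. For the first assertion, since $M$ and $\widetilde{M}$ agree on $\{\tau_A=\infty\}$, the inclusion
\[
\Bigl\{\sup_{n\geq m}\!\bigl(\tfrac{V_n}{\phi_n} - \tfrac{V_m}{\phi_m}\bigr) \geq \tfrac{A}{(m+\theta)^{\alpha/2}}\Bigr\} \subseteq \{\tau_A<\infty\} \cup \Bigl\{\sup_{n\geq m}(\widetilde M_n - \widetilde M_m) \geq \tfrac{A}{2(m+\theta)^{\alpha/2}}\Bigr\}
\]
combined with a union bound yields the desired $e^{-c_V A}$ with $c_V := \tfrac{1}{2}\min(c_1,c_2)$ and $K := \max(K_0,K_1)$.

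The principal technical obstacle is precisely this bootstrap: the only deterministic a priori bound, $V_j/\phi_j = O(j^{1-\alpha})$, is far too weak, so one has no choice but to stop at the very level $A$ being probed and let the two events $\{\tau_A<\infty\}$ and $\{\tau_A=\infty, \sup\widetilde M\text{ large}\}$ control each other. A secondary subtlety is verifying that the Freedman exponent stays linear in $A$ uniformly in $m$; this is what forces the proof to keep track of the scalings $R \sim (m+\theta)^{-\alpha}$ and $\sigma^2 \sim A(m+\theta)^{-\alpha}$ simultaneously, and is the reason the result holds at scale $(m+\theta)^{-\alpha/2}$ rather than at the naive deterministic scale $(m+\theta)^{-\alpha}$.
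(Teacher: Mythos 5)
Your proposal is correct and follows essentially the same route as the paper: stop the martingale at the first time $V_j/\phi_j$ exceeds level $A$ to break the circularity in the quadratic-variation bound, apply Freedman to the stopped martingale (with the maximal/supremum version obtained by a further stopping-time argument, as in Lemma \ref{aux}), handle the $m=0$ case by noting that $\tau_A<\infty$ forces the stopped martingale to be at least $A - O(1)$, and then bootstrap the general $m$ via a union bound over $\{\tau_A<\infty\}$ and the event that the stopped martingale increment exceeds $\tfrac{A}{2}(m+\theta)^{-\alpha/2}$. The only cosmetic difference is that the paper absorbs the deterministic drift $\theta_i$ into the definition of its stopping time $T_A$ and introduces a second explicit stopping time $\hat T_B$ for $m>0$, whereas you keep $\tau_A$ simpler and state the event inclusion directly; the underlying argument and all the key estimates (increment bound $\sim (m+\theta)^{-\alpha}$, quadratic variation $\sim A(m+\theta)^{-\alpha}$, exponent linear in $A$ uniformly in $m$) are the same.
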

\begin{proof}
We start with the particular case $m=0$ and then use it to prove the general result.
	
\noindent \textit{Case $m=0$.}	From Lemma \ref{recV} we know that the $V_n$ may be written as a mean zero martingale $M_n$ plus a deterministic factor $\theta_n$, where $\{\theta_n\}_{n \in \N}$ is a increasing positive and bounded sequence of real numbers. Thus, $\{\theta_n\}_{n \in \N}$ converges to some positive number~$\theta_{\infty}$. For a positive real number $A$, consider the following stopping time
\begin{align}\label{def:tauA}
T_A:&= \inf \left\{i \in \N : \frac{V_{i}}{\phi_{i}} \geq A + \theta_i\right\}.
\end{align}
Observe that
\begin{equation}\label{ineq:boundsupprob}
	\begin{split}
\Pp\left( \sup_{j \in \N} \left( \frac{V_j}{\phi_j} \right) \geq A+ \theta_{\infty} \right) &\leq \Pp\left( \exists j \in \N: \frac{V_j}{\phi_j} \geq A+ \theta_{j} \right) \\
&= \lim_n \Pp\left(\frac{V_{T_A\wedge n}}{\phi_{T_A\wedge n}} \geq A+ \theta_{T_A\wedge n} \right)\\
&= \lim_n \Pp \left(M_{T_A\wedge n}  \geq A \right).
\end{split}
\end{equation}
By the above inequality, the first case is proven if we obtain a proper upper bound for the tail of the stopped martingale~$\{M_{T_A \wedge n}\}_{n\in \N}$. We will do this via Lemma \ref{aux}, which requires bounds on the increment and quadratic variation of $\{M_{T_A \wedge n}\}_{n\in \N}$. We obtain these bounds on the next lines. For the increment a direct application of Lemma \ref{recV} gives us
\begin{align*}
|M_{T_A\wedge (j+1)} -M_{T_A\wedge j}| \leq R,
\end{align*}
where $R=  \dfrac{2 \Gamma(1+\theta+\alpha) }{\Gamma(1+\theta) } (1+\theta)^{-\alpha}$. For the quadratic variation $W_{n \wedge T_A}$ we have that, also by Lemma \ref{recV}, 
\begin{align}\label{ineq:wnta}
\nonumber
W_{n \wedge T_A} & = \sum_{j=2}^{n \wedge T_A} \mathbb{E}[(\Delta M_j)^2|\F_{j-1}] \\
\nonumber
& \leq  \sum_{j=2}^{n \wedge T_A} \frac{2\Gamma(1+\theta+\alpha) \alpha }{\Gamma(1+\theta) } \cdot (j+\theta)^{-\alpha-1} \frac{V_{j-1} +\frac{\theta}{\alpha}}{\phi_{j-1}}\\
& \leq  \sum_{j=2}^{n \wedge T_A} \frac{2\Gamma(1+\theta+\alpha) \alpha }{\Gamma(1+\theta) } \cdot (j+\theta)^{-\alpha-1} \left( A + \theta_j +  \frac{\theta}{\alpha \phi_{j-1}} \right).
\end{align}
Choosing $A \geq K(\alpha, \theta)$, which is defined below:
\begin{align}
\label{def:K} K(\alpha,\theta):=  \frac{\theta}{\alpha} + \sup_{j \in \N} \{ \theta_j \};
\end{align}
on (\ref{ineq:wnta}), we obtain
\begin{align*}
W_{n \wedge T_A} & \leq  \frac{4\Gamma(1+\theta+\alpha) }{\Gamma(1+\theta) } (1+\theta)^{-\alpha} \cdot A = \frac{2}{R} \cdot (R^2 A).
\end{align*}
Finally, applying Lemma \ref{aux}, with 
\begin{align*}
c_1 :=\frac{2}{R} =  \dfrac{\Gamma(1+\theta) } {  \Gamma(1+\theta+\alpha) }(1+\theta)^{\alpha}
\end{align*}
we obtain
\begin{equation}
\Pp \left(M_{T_A\wedge n}  \geq A \right) \leq \exp \left(\frac{-A}{ 2\dfrac{\Gamma(1+\theta) } {  \Gamma(1+\theta+\alpha) }(1+\theta)^{\alpha} +\frac{2}{3}} \right),
\end{equation}
and
\begin{equation}
\Pp \left(M_{T_A\wedge n}  \geq A \right) \leq \exp \left( -c_2 A \right),
\end{equation}
for 
\begin{align}
c_2 = \left( 2\dfrac{\Gamma(1+\theta) } {  \Gamma(1+\theta+\alpha) }(1+\theta)^{\alpha} +\frac{2}{3} \right)^{-1}.
\end{align} 
 The above inequality combined with (\ref{ineq:boundsupprob}) gives us
\begin{align*}
\Pp\left( \sup_{m \in \N} \left( \frac{V_m}{\phi_m} \right) \geq A \right) &\leq \exp(-c_2 A),
\end{align*}
proving the result for $m=0$.

\noindent \textit{Case $m>0$}.
The proof of the case $m>0$ is similar to the first case, but it requires another stopping time and the case $m=0$ itself. So, consider the following stopping time:
\begin{align*}
\hat{T}_B :&= \inf \left\{j \geq m : \frac{V_{j}}{\phi_{j}} - \frac{V_{m}}{\phi_{m}}  \geq B \right\} \\
&=  \inf \left\{j \in \N : (M_j-M_m) + (\theta_j - \theta_m)  \geq B \right\}.
\end{align*}
Observe that, as showed in the proof of Lemma \ref{recV},
\begin{equation}
\theta_n-\theta_m  \leq \frac{4 \Gamma(1+\theta+\alpha) \theta}{\alpha \Gamma(1+\theta)} \frac{1}{(m+\theta)^{\alpha}}.
\end{equation}
Now, let $\displaystyle B= \frac{A}{(m+\theta)^{\alpha/2}}$ and suppose $A \geq 2\theta_{\infty}$. Thus,
\begin{align*}
\Pp\left( \sup_{j \geq m} \left(\frac{V_j}{\phi_j} - \frac{V_m}{\phi_m} \right) \geq  \frac{A}{(m+\theta)^{\alpha/2}} \right) 
& \leq  \Pp\left( \exists j \leq n: (M_j - M_m) + (\theta_j - \theta_m)  \geq  \frac{A}{(m+\theta)^{\alpha/2}}  \right)\\
& =   \Pp\left(  (M_{\hat{T}_B \wedge n} - M_m) + (\theta_{\hat{T}_B \wedge n} - \theta_m)  \geq B  \right) \\
\mbox{(use that $\theta_{\hat{T}_B \wedge n}\geq \theta_m$)}& \leq   \lim_n \Pp \left(M_{\hat{T}_B \wedge n}- M_m \geq  \frac{A}{2(m+\theta)^{\alpha/2}} \right).
\end{align*}
Let $T_A$ be the same as defined in (\ref{def:tauA}). Then:
\begin{align*}
\Pp \left(M_{\hat{T}_B \wedge n}- M_m \geq  \frac{A}{2(m+\theta)^{\alpha/2}} \right) & \le  \Pp \left(M_{\hat{T}_B \wedge n} - M_m \geq  \frac{A}{2(m+\theta)^{\alpha/2}}, T_A \geq n \right) \\ & \hspace{0.5cm}+  \Pp( T_A < n )\\
& \leq   \Pp \left(M_{\hat{T}_B  \wedge T_A \wedge n} - M_m \geq  \frac{A}{2(m+\theta)^{\alpha/2}}, T_A \geq n \right) \\
& \hspace{0.5cm}+ \Pp \left( \sup_{j \in \N} \frac{V_j}{\phi_j} \geq A \right)\\
& \leq   \Pp \left(M_{\hat{T}_B  \wedge T_A \wedge n} - M_m \geq  \frac{A}{2(m+\theta)^{\alpha/2}}  \right) \\
& \hspace{0.5cm}+ \Pp \left( \sup_{j \in \N} \frac{V_j}{\phi_j} \geq A \right).
\end{align*}
As in the case $m=0$, by Lemma \ref{Vm}, the increment of $\{M_{j \wedge \hat{T}_b \wedge T_a}\}$ satisfies the following upper bound
\begin{align*}
|(M_{(j+1) \wedge \hat{T}_B  \wedge T_A} - M_m)- (M_{j \wedge \hat{T}_B  \wedge T_A} - M_m)| &\leq   \frac{2 \Gamma(1+\theta+\alpha) }{\Gamma(1+\theta) } (m+\theta)^{-\frac{\alpha}{2}},
\end{align*}
whereas its quadratic variation satisfies
\begin{equation*}
W_{n \wedge T_A} \leq  \frac{4\Gamma(1+\theta+\alpha) }{\Gamma(1+\theta) } (m+\theta)^{-\alpha} \cdot A.
\end{equation*}
Thus, again by Lemma \ref{aux} it follows that
\begin{align*}
 \Pp \left(M_{\hat{T}_B  \wedge T_A \wedge n} - M_m \geq  \frac{A}{2(m+\theta)^{\alpha/2}}  \right) \leq \exp(-c_3 A),
\end{align*}
for some constant $c_3$, which implies
\begin{align*}
\Pp\left( \sup_{j \geq m} \left(\frac{V_j}{\phi_j} - \frac{V_m}{\phi_m} \right) \geq  \frac{A}{(m+\theta)^{\alpha/2}} \right) &\leq \exp(-c_2 A) + \exp(-c_3 A) \leq \exp(-c_V A),
\end{align*}
for $c_V = \log 2 \cdot \min \{c_2,c_3\}$.
\end{proof}

\subsection{Proof of Theorem \ref{thm:V}}\label{sec:proofVn}

A consequence of Theorem \ref{Vm} is to give estimates of how large the deviation of $V_j/\phi_j$ from its limit $V_*$ can be uniformly in time.

\begin{proof}[Proof of theorem \ref{thm:V}]

Given $\delta$ define 
\begin{equation}
\delta_j =  \frac{\delta}{(j+1)(j+2)}.
\end{equation}
Let $E_j$ denote the following event
\begin{equation}
E_j := \left\{ \forall m \geq 2^j: \left| \frac{V_m}{\phi_m} - \frac{V_{2^j}}{\phi_{2^j}} \right| \leq \frac{ \log \frac{1}{\delta_j} }{c_V (2^j +\theta)^{\frac{\alpha}{2}}} \right\}.
\end{equation}

Assuming $ \log \frac{2}{\delta} \geq K_1$ we have by Theorem \ref{Vm} 
\begin{align*}
\Pp(E_j^c) &\leq \exp \left( - \log \frac{1}{\delta_j}   \right) \leq \frac{ \delta }{(j+1)(j+2)},
\end{align*}
which implies, by union bound,
\begin{align*}
\Pp \left( \bigcap_{j \geq 0} E_j \right) &\geq 1- \sum_{j \geq 0} \Pp(E_j^c) \\
&\geq 1- \sum_{j \geq 0} \delta_j \\
&\geq 1- \delta.
\end{align*}

Now, observe that, when $E_j$ occurs, we have for all $m \in [2^j, 2^{j+1}]$
\begin{align*}
\left| \frac{V_m}{\phi_m} - V_* \right| &\leq \left| \frac{V_m}{\phi_m} - \frac{V_{2^j}}{\phi_{2^j}} \right| + \left|  \frac{V_{2^j}}{\phi_{2^j}} - V_* \right|  \\
& \leq 2\sup_{m \geq 2^j} \left| \frac{V_m}{\phi_m} - \frac{V_{2^j}}{\phi_{2^j}} \right| \\
& \leq \frac{2\log \frac{1}{\delta_j} }{(2^j+\theta)^{\frac{\alpha}{2}}} \\
& \leq \frac{1}{c_V(2^j+\theta)^{\frac{\alpha}{2}}} \left[  4\log (j+2) + 2\log \left( \frac{1}{\delta} \right) \right],
\end{align*}
and once $m \in [2^j,2^{j+1}]$ it follows that
\begin{align*}
\left| \frac{V_m}{\phi_m} - V_* \right| &\leq \frac{32}{c_V(m+\theta)^{\frac{\alpha}{2}}}\left[  \log \log (m+2) + \log \left( \frac{1}{\delta} \right) \right],
\end{align*}
for any $j \in \{0,1,2,\cdots\}$.
To finish take $c_*= \frac{32}{c_V}$.
\end{proof}

\section{Preliminary estimates for the number of parts of size $k$} \label{numbersizek}
This section is devoted to give estimates for the number of classes with fixed number of elements at time $n$, $N_n(k)$. As in the case for $V_n$, we investigate the behaviour of $N_n(k)$ properly normalized. In this sense, we let $\psi_n(k)$ be the normalization factor for $N_n(k)$ given by the expression below
\begin{equation}\label{def:psik}
\begin{split}
	\psi_n(k) :&= \prod_{j=1}^{n-1}  \left( 1 - \frac{k-\alpha}{j+\theta} \right) =  \frac{\Gamma(k+\theta)\Gamma(n-k+\alpha+\theta)}{\Gamma(\alpha+\theta)\Gamma(n+\theta)}.
\end{split}
\end{equation}
We note that, for each $k$ fixed, $\psi_n(k) = \Theta(n^{\alpha-k})$. The proof of this result may be done similarly to that one given to $\phi_n$. We also let $X_n(k)$ be
\begin{equation}\label{def:Xnk}
X_n(k) := \frac{N_n(k)}{\psi_n(k)}.
\end{equation}
The first step in the analysis of the non-asymptotic behavior of $N_n(k)$ is to prove that $X_n(k)$ also satisfies a recurrence relation (Subsection \ref{sec:recurrenceXn}). We then present a martingale concentration argument that will be useful in analyzing the recurrence (Subsection \ref{sec:martingaleXn}). Subsequent sections will use these results to give upper and lower bounds on $N_n(k)$.

\subsection{Recurrence relation for $X_n(k)$}\label{sec:recurrenceXn}
The goal of this part is to derive a recurrence relation for $X_n(k)$. The proof is essentially the same we have given for $V_n$.
\begin{lemma} \label{lemma:recX}
For all $n,k \in \N$ the sequence $\{X_n(k)\}_{n \in \N}$ satisfies
\begin{align}
\label{rec:Xn1} X_{n}(1) &= M_{n}(1) + \sum_{j=1}^{n-1}\frac{\alpha V_j }{(j+\theta)\psi_{j+1}(1)}+ \theta_{n}; \\
\label{rec:Xnk} X_{n}(k) & = M_{n}(k)+X_k(k) + \frac{k-1-\alpha}{k-1+\theta} \sum_{j=k}^{n-1} X_j(k-1), \forall k > 1,
\end{align}
where $\{M_n(k)\}_{n \in \N}$ are zero mean martingales defined in (\ref{def:Mn1}) and (\ref{def:Mnk}) for all $k \in \N$ and 
\begin{equation}
\theta_n(1) :=  N_1(1)+ \displaystyle \sum_{j=1}^{n-1}\frac{\theta}{(j+\theta)\psi_{j+1}(1)}.
\end{equation}
\end{lemma}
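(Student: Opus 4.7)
The plan is to proceed by a direct one-step analysis: compute the conditional expectation of $\Delta N_n(k)$ given $\F_{n-1}$ from the transition rules, split $N_n(k)$ into a mean and a martingale increment, then divide by $\psi_n(k)$ so that the autoregressive coefficient on $N_{n-1}(k)$ cancels against the ratio $\psi_{n-1}(k)/\psi_n(k)$. Iterating the resulting one-step relation from the smallest $n$ where $N_n(k)$ can be nonzero then produces the stated formulas.

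More concretely, when customer $n$ arrives, $\Delta N_n(k) := N_n(k) - N_{n-1}(k)$ is supported on $\{-1,0,1\}$, and from the evolution of the GCRP its conditional law is determined by: (i) $\Delta N_n(k) = +1$ if she joins a table of size $k-1$, which happens with probability $(k-1-\alpha)N_{n-1}(k-1)/(n-1+\theta)$ for $k\geq 2$; (ii) $\Delta N_n(k) = -1$ if she joins a table of size $k$, probability $(k-\alpha)N_{n-1}(k)/(n-1+\theta)$; (iii) otherwise $\Delta N_n(k)=0$. For $k=1$ the role of the ``arrival from size $k-1$'' is instead played by the opening of a new table, with probability $(\alpha V_{n-1}+\theta)/(n-1+\theta)$. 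I would then write
\[
N_n(k) = \left(1 - \frac{k-\alpha}{n-1+\theta}\right)N_{n-1}(k) + \frac{k-1-\alpha}{n-1+\theta}\,N_{n-1}(k-1) + \bigl(\Delta N_n(k) - \E[\Delta N_n(k)\mid \F_{n-1}]\bigr),
\]
with an analogous identity for $k=1$ where the middle term is replaced by $(\alpha V_{n-1}+\theta)/(n-1+\theta)$.

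The key algebraic step is to divide through by $\psi_n(k)$. By the product definition of $\psi_n(k)$, the autoregressive factor satisfies $\psi_{n-1}(k) = \psi_n(k)\big/\bigl(1-\tfrac{k-\alpha}{n-1+\theta}\bigr)$, which absorbs the coefficient on $N_{n-1}(k)$ and produces $X_{n-1}(k)$ cleanly. For the cross term for $k\geq 2$, I would invoke the telescoping identity
\[
\frac{\psi_{n-1}(k-1)}{\psi_n(k)} = \frac{\Gamma(k-1+\theta)\Gamma(n+\theta)}{\Gamma(k+\theta)\Gamma(n-1+\theta)} = \frac{n-1+\theta}{k-1+\theta},
\]
so that $\tfrac{k-1-\alpha}{(n-1+\theta)\psi_n(k)}N_{n-1}(k-1) = \tfrac{k-1-\alpha}{k-1+\theta}X_{n-1}(k-1)$. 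Defining $\zeta_n(k):=(\Delta N_n(k)-\E[\Delta N_n(k)\mid\F_{n-1}])/\psi_n(k)$ gives the one-step recursion
\[
X_n(k) = X_{n-1}(k) + \frac{k-1-\alpha}{k-1+\theta}\,X_{n-1}(k-1) + \zeta_n(k),
\]
with $\E[\zeta_n(k)\mid\F_{n-1}]=0$ by construction.

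To close the argument I would iterate this recursion. For $k\geq 2$, since $N_j(k)=0$ for $j<k$, summing from $j=k+1$ to $n$ and setting $M_n(k):=\sum_{j=k+1}^{n}\zeta_j(k)$ (with $M_k(k):=0$) yields exactly (\ref{rec:Xnk}). For $k=1$ the base is $X_1(1)=N_1(1)/\psi_1(1)=1$, and summing the corresponding one-step relation separates the random contribution $\sum_j \alpha V_j/((j+\theta)\psi_{j+1}(1))$ from the deterministic $\theta$-piece that together with $N_1(1)$ forms $\theta_n(1)$, giving (\ref{rec:Xn1}). Finally, the martingale property of $M_n(k)$ is automatic once one checks that $\zeta_j(k)$ is $\F_j$-measurable with zero conditional mean. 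The main point to be careful about is not an obstacle but bookkeeping: keeping track of the base case indexing (the recursion starts at $n=k$, where $\psi_k(k)$ must be interpreted via the product, not the Gamma-function formula, to avoid spurious singularities), and verifying that the identity $\psi_{n-1}(k-1)/\psi_n(k)=(n-1+\theta)/(k-1+\theta)$ holds for all relevant $n,k$.
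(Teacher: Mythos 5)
Your proposal is correct and follows essentially the same route as the paper's proof: compute the conditional law of $\Delta N_n(k)$ from the transition rules, centralize and normalize by $\psi_n(k)$, use the ratio identity $\psi_{n-1}(k-1)/\psi_n(k)=(n-1+\theta)/(k-1+\theta)$ (the paper's equation (\ref{eq:relationpsis}) in shifted form) to turn the cross term into $\tfrac{k-1-\alpha}{k-1+\theta}X_{n-1}(k-1)$, and iterate with $M_n(k)=\sum\zeta_j(k)$. The extra remark about the base case $\psi_k(k)$ is harmless bookkeeping; the two expressions for $\psi_n(k)$ agree there and give $\psi_k(k)=1$.
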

\begin{proof} We treat the case $k=1$ separately since $X_n(1)$ satisfies a recurrence relation slight different from the other cases. However, the proof for both cases follow the recipe given by the proof of Lemma \ref{recV}, so we do not fill all the details here.

\noindent \textit{Case $k=1$.} Note that $\incre{N}{n}(1) \in \{-1,0,1\}$. Thus, conditioned to $\F_{n-1}$ we know its distribution, which is given by
\begin{equation}\label{eq:distx}
\begin{split}
\Pp\left(\incre{N}{n}(1) = -1 \middle |\F_{n-1} \right) &= \frac{(1- \alpha)N_{n-1}(1)}{n-1+\theta}; \\
\Pp\left(\incre{N}{n}(1) = 1 \middle |\F_{n-1} \right) &= \frac{\alpha V_{n-1} + \theta}{n-1+\theta}; \\
\Pp\left(\incre{N}{n}(1) = 0 \middle |\F_{n-1} \right)) &= 1- \Pp\left(\incre{N}{n}(1) = -1 \middle |\F_{n-1} \right)- \Pp\left(\incre{N}{n}(1) = 1 \middle |\F_{n-1} \right)
\end{split}
\end{equation}
Again, as in Lemma \ref{recV} but normalizing properly, define
\begin{align}
\zeta_n(1) :&= \frac{1}{\psi_{n}(1)}\left( \Delta N_n(1) -\frac{\alpha V_{n-1} + \theta - (1 - \alpha)N_{n-1}(1)}{n-1+\theta} \right),  \\
\label{def:Mn1} M_n(1) &:= \sum_{j=2}^{n} \zeta_j(1),
\end{align} 
and observe that the identities (\ref{eq:distx}) imply that the sequence $\{M_n(1)\}_n \in \N$ is a zero mean martingale. Thus
\begin{align*}
N_{n}(1) &= N_{n-1}(1) + \incre{N}{n}(1) \\ \Rightarrow 
N_{n}(1)&= \left(1 - \frac{1-\alpha}{n-1+\theta} \right)N_{n-1}(1) + \left(\incre{N}{n}(1) - \frac{\alpha V_{n-1} + \theta - (1 - \alpha)N_{n-1}}{n-1+\theta} \right) + \frac{\alpha V_{n-1} + \theta}{n-1+\theta}\\
\Rightarrow\frac{N_{n}(1)}{\psi_{n}(1)} &=  \frac{N_{n-1}(1)}{\psi_{n-1}(1)} + \zeta_{n}(k) + \frac{\alpha V_{n-1} + \theta}{(n-1+\theta) \psi_{n}(1)}.
\end{align*}
We recognize above the terms $X_{m}(1)=N_{m}(1)/\psi_{m}(1)$ for $m=n-1,n$. We conclude
\[X_{n}(1) = M_{n}(1) + \sum_{j=1}^{n-1}\frac{\alpha V_j }{(j+\theta)\psi_{j+1}(1)} + \theta_{n},\]
where 
\begin{equation}\label{def:tetan}
\theta_n(1) :=  N_1(1)+  \sum_{j=1}^{n-1}\frac{\theta}{(j+\theta)\psi_{j+1}(1)}.
\end{equation}
 \noindent \textit{Case $k>1$.} As before we calculate the conditional distribution of $\incre{N}{n}(k)$, which is given below.
\begin{align*}
\Pp\left(\incre{N}{n}(k) = -1 \middle |\F_{n-1} \right) &= \frac{(k - \alpha)N_{n-1}(k)}{n-1+\theta}; \\
\Pp\left(\incre{N}{n}(k) = 1 \middle |\F_{n-1} \right) &= \frac{(k-1 - \alpha)N_{n-1}(k-1)}{n-1+\theta}; \\
\Pp\left(\incre{N}{n}(k) = 0 \middle |\F_{n-1} \right) &= 1- \Pp\left(\incre{N}{n}(k) = -1 \middle |\F_{n-1} \right)- \Pp\left(\incre{N}{n}(k) = 1 \middle |\F_{n-1} \right);
\end{align*}
Again we centralize and normalize it and define our martingale from its sum:
\begin{align}
\zeta_{n}(k) &:= \frac{\incre{N}{n}(k) - \frac{(k - 1- \alpha)N_{n-1}(k-1) - (k - \alpha)N_{n-1}(k)}{n-1+\theta} }{\psi_{n}(k)}; \\ 
\label{def:Mnk} M_n(k) &:= \sum_{j=k+1}^{n} \zeta_j(k).
\end{align}
The relation below between $\psi_n(k-1)$ and $\psi_{n+1}(k)$ will be useful to our purposes:
\begin{equation}\label{eq:relationpsis}
\frac{\psi_{n}(k-1)}{\psi_{n+1}(k)} =\frac{n+\theta}{k-1+\theta}.
\end{equation}
This follows from the definition of $\psi_n(k)$ given at (\ref{def:psik}). This relation allows us to derive the desired recurrence relation as follows
\begin{eqnarray*} 
N_{n}(k)& = & \left(1 - \frac{k-\alpha}{n+\theta} \right)N_{n-1}(k) \\ & & + \left(\incre{N}{n}(k) - \frac{(k - 1- \alpha)N_{n-1}(k-1) - (k - \alpha)N_{n-1}(k)}{n-1+\theta} \right)\\
 & & + \frac{(k - 1- \alpha)N_{n-1}(k-1)}{n-1+\theta}  \\
\Rightarrow \frac{N_{n}(k)}{\psi_{n}(k)} &=&  \frac{N_{n-1}(k)}{\psi_{n-1}(k)} + \zeta_{n}(k) + \frac{(k - 1- \alpha)N_{n-1}(k-1)}{(n-1+\theta)\psi_{n}(k)}.
\end{eqnarray*}
We have above the terms $X_m(k)=N_m(k)/\psi_{m}(k)$ for $m=n,n+1$. The last term in the right-hand side is:
\[\frac{N_{n-1}(k-1)}{\psi_{n}(k)} = \frac{\psi_{n-1}(k-1)}{\psi_{n}(k)}\, X_{n-1}(k-1)= \frac{n-1+\theta}{k-1+\theta}\, X_{n-1}(k-1)\mbox{ by (\ref{eq:relationpsis}).}\]
We deduce:
\[X_{n}(k) = X_{n-1}(k)+  \zeta_{n}(k) + \frac{k-1-\alpha}{k-1+\theta} X_{n-1}(k-1),\]
from which the recursion follows.
\end{proof}
We may obtain an upper bound for $\theta_n(1)$ using the bounds for ratios of gamma functions in the Appendix:
\begin{equation}\label{ineq:boundthetan1}
\begin{split}
\theta_n(1) &=  1+ \displaystyle \sum_{j=1}^{n-1}\frac{\theta}{(j+\theta)\psi_{j+1}(1)} \\
&=  1+ \frac{\theta \Gamma(\alpha+\theta)}{\Gamma(1+\theta)} \displaystyle \sum_{j=1}^{n-1}\frac{\Gamma(j+\theta)}{\Gamma(j+\theta + \alpha)} \\
\theta_n(1)&\leq  1+ \frac{2\theta \Gamma(\alpha+\theta)}{(1-\alpha)\Gamma(1+\theta)} (n+\theta)^{1-\alpha}.
\end{split}
\end{equation}
This upper bound will be useful latter.

\subsection{The martingale component of $X_n(k)$}\label{sec:martingaleXn}
In this subsection we prove a concentration inequality result for a martingale sequence whose increment and quadratic variation satisfy certain hypothesis. Then we prove that the martingale component of $\{X_n(k)\}_{n}$ satisfies these conditions, for all $k$, proving then that the martingale component of the~$\{X_n(k)\}_{n}$ is well behaved.

 \begin{lemma} \label{Martingale:Bound}
 	Let $d>0$ and $k \in \N$ be constants and $\{M_n\}_{n \in \N}$ be a martingale sequence satisfying
 	\begin{enumerate}
 		\item $|\Delta M_{j}| \leq \dfrac{d}{\Gamma(k+\theta)} \cdot (j-1+\theta)^{k-\alpha}$,
 		
 		\item $\displaystyle \mathbb{E}[(\Delta M_{j})^2|F_{j-1}] \leq \dfrac{d^2 \cdot (2k-\alpha) }{\Gamma(k+\theta)^2}  \cdot (j-1+\theta)^{2k-\alpha-1} \cdot \left(\frac{V_{j-1}}{\phi_{j-1}} + b_{j-1} \right)$,
 	\end{enumerate}
 	then there exists a constant $c_M$ such that
 	\begin{align*}
 	\Pp\left(|M_{n}-M_{m}| \geq \frac{\sqrt{2} d }{\Gamma(k+\theta)} (n+\theta)^{k-\frac{\alpha}{2}} A \right) \leq  e^{-c_M A}.
 	\end{align*}
 	for all $A \geq  \max_j \{b_j\}$.
 \end{lemma}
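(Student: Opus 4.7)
The plan is to apply Freedman's inequality (or equivalently Lemma \ref{aux}) to the shifted martingale $N_j := M_{m+j} - M_m$, handling the fact that the hypothesized quadratic-variation bound depends on the random quantity $V_{j-1}/\phi_{j-1}$ by localizing on a good event furnished by Theorem \ref{Vm}. Hypothesis~(1) immediately gives the uniform increment bound
\[|\Delta N_j| \leq R := \frac{d\,(n+\theta)^{k-\alpha}}{\Gamma(k+\theta)}\qquad (j \leq n),\]
so only the quadratic variation requires real work.

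The main obstacle is the $V_{j-1}/\phi_{j-1}$ factor in hypothesis~(2), which prevents a deterministic Freedman bound. I would overcome this by localizing on
\[E_A := \left\{\sup_{j \in \N}\frac{V_j}{\phi_j} \leq A\right\},\]
on which hypothesis~(2), together with $b_{j-1} \leq A$ (from the assumption $A \geq \max_j b_j$), yields $\mathbb{E}[(\Delta M_j)^2|\mathcal{F}_{j-1}] \leq 2A\,d^2\,(2k-\alpha)\,(j-1+\theta)^{2k-\alpha-1}/\Gamma(k+\theta)^2$. Comparing the resulting sum with an integral gives
\[\sum_{j=m+1}^n (j-1+\theta)^{2k-\alpha-1} \leq \frac{C\,(n+\theta)^{2k-\alpha}}{2k-\alpha},\]
so on $E_A$ we obtain $W_n - W_m \leq \sigma^2 := C'\,A\,d^2\,(n+\theta)^{2k-\alpha}/\Gamma(k+\theta)^2$. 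The case $m=0$ of Theorem \ref{Vm} then controls the complement: $\Pp(E_A^c) \leq e^{-c_V A}$ as soon as $A \geq K(\alpha,\theta)$.

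With these two estimates in place, I would apply Freedman's inequality with threshold $\lambda := \sqrt{2}\,d\,(n+\theta)^{k-\alpha/2}A/\Gamma(k+\theta)$. A short computation gives
\[\frac{\lambda^2}{2\sigma^2} \asymp A\qquad\text{and}\qquad \frac{3\lambda}{2R} \asymp (n+\theta)^{\alpha/2}A,\]
so that the Freedman exponent $\lambda^2/(2\sigma^2 + 2R\lambda/3)$ is bounded below by a constant multiple of $A$ (the linear-correction term is dominated for $n$ large, and finitely many small $n$ can be absorbed into $c_M$). A union bound then yields
\[\Pp(|M_n - M_m| \geq \lambda) \leq 2\exp\bigl(-\Theta(A)\bigr) + \Pp(E_A^c) \leq e^{-c_M A},\]
with $c_M$ the minimum of the constants produced at each step, further shrunk to cover the regime $A \in [\max_j b_j, K(\alpha,\theta)]$ trivially. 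The genuinely delicate step is the localization via Theorem \ref{Vm}: this reliance on the previously-established concentration of $V_n$ is precisely why the $N_n(k)$ analysis had to wait until after Section~\ref{numberparts}.
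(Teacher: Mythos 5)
Your proposal matches the paper's proof: localize on the event $E_A = \{\sup_j V_j/\phi_j \leq A\}$ using hypothesis (2) and $b_j\le A$ to get a deterministic bound on the quadratic variation there, control $\Pp(E_A^c)$ via the $m=0$ case of Theorem~\ref{Vm}, and conclude by Freedman's inequality (the paper routes this last step through Lemma~\ref{aux}, but that is itself just a packaging of Freedman, so the argument is the same). Your bookkeeping of $\lambda$, $\sigma^2$, $R$ is correct and in fact cleaner than the paper's, which misstates the choice of $R$ and $c_1$ in its application of Lemma~\ref{aux}; the only caveat -- shared by the paper -- is that Theorem~\ref{Vm} requires $A \ge K(\alpha,\theta)$, so the ``absorb small $A$ by shrinking $c_M$'' remark needs to be read as applying only when the stated bound is nontrivial.
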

\begin{proof}
 Let $W_{n }$ the quadratic variation of the martingale $\{ M_{n} - M_m\}_n$. By our assumptions: 
 \begin{align*}
 W_n :&= \sum_{j=m+1}^{n} \mathbb{E}[(\Delta M_{j})^2| \F_j] \le \frac{d^2 \cdot |2k-\alpha|}{\Gamma(k+\theta)^2} \sum_{j=m+1}^{n}  (j-1+\theta)^{2k-\alpha-1} \left( \frac{V_{j-1}}{\phi_{j-1}} + b_{j-1} \right).
 \end{align*}
 Moreover, in the occurrence of the event $ \left\{ \sup_{j \in \N} \left( \frac{V_j}{\phi_j}\right) \leq A \right\},$  and using that $b_j \leq A$ we have
 \begin{align*}
 W_n &\leq  \frac{2d^2 \cdot A }{\Gamma(k+\theta)^2}  (n+\theta)^{2k-\alpha},
 \end{align*}
 in symbols, the following inclusion of events holds
 \begin{align*}
 \left\{ W_n \geq  \frac{2d^2 \cdot A }{\Gamma(k+\theta)^2}  (n+\theta)^{2k-\alpha} \right\} \subset \left\{ \sup_{j \in \N} \left( \frac{V_j}{\phi_j}\right) \geq A \right\},
 \end{align*}
which combined with Theorem \ref{Vm} yields
  \begin{align*}
 \Pp \left( W_n \geq  \frac{2d^2 \cdot A }{\Gamma(k+\theta)^2}  (n+\theta)^{2k-\alpha} \right) \leq \exp(-c_VA).
 \end{align*}
Finally, applying Lemma \ref{aux} with $R=  \frac{d^2 \cdot A }{\Gamma(k+\theta)^2}  (n+\theta)^{2k-\alpha} $ and $c_1=1$ we obtain
\begin{align*}
 	\Pp\left(|M_{n}-M_{m}| \geq \frac{\sqrt{2} d }{\Gamma(k+\theta)} (n+\theta)^{k-\frac{\alpha}{2}} A \right) &\leq \exp \left(\frac{-A}{2+\frac{2}{3}} \right) + \exp(-c_VA) \leq \exp(-c_M A)
 	\end{align*}
for some constant $c_M$.
\end{proof}

\begin{lemma} \label{ord:Mnk}
Let $\{M_n(k)\}_n$, $k \geq 1$, be the martingale defined in (\ref{def:Mn1}) and (\ref{def:Mnk}) and $A \geq 0$ a constant. Then there is a constant $h_{\alpha,\theta}$ such that
	\begin{align*}
 	\Pp\left(|M_{n}(k)| \geq \frac{ h_{\alpha, \theta} }{\Gamma(k+\theta)} (n+\theta)^{k-\frac{\alpha}{2}}  \left(A  + 2\log n \right)\right) \leq  \frac{e^{- A}}{n^2}.
 	\end{align*}
\end{lemma}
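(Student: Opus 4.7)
The plan is to invoke Lemma \ref{Martingale:Bound} on the martingale $\{M_n(k)\}_n$, with starting index $m$ chosen so that $M_m(k)=0$ (i.e., $m=1$ for $k=1$, $m=k$ for $k\geq 2$), taking the parameter $A$ of that lemma to equal $A' := \max\{B,\,c_M^{-1}(A + 2\log n)\}$ for an appropriate constant $B=B(\alpha,\theta)$ described below. Since Lemma \ref{Martingale:Bound} already packages Freedman's inequality together with the tail bound on $V_j/\phi_j$ from Theorem \ref{Vm}, the only remaining task is to verify its two structural hypotheses on $\{M_n(k)\}_n$ with suitable $d=d(\alpha,\theta)$ and $\{b_j\}$.

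For the increment bound, the conditional distributions displayed in the proof of Lemma \ref{lemma:recX} show $\Delta N_j(k) \in \{-1,0,1\}$, hence $|\Delta M_j(k)| = |\zeta_j(k)| \leq 2/\psi_j(k)$. Combining this with the ratio-of-Gammas estimates from the Appendix applied to the formula (\ref{def:psik}) yields $|\Delta M_j(k)| \leq \tfrac{C_\psi}{\Gamma(k+\theta)}(j-1+\theta)^{k-\alpha}$ for some $C_\psi=C_\psi(\alpha,\theta)$, which gives hypothesis (1). For the conditional quadratic variation, I bound
\[
\mathbb{E}[(\Delta M_j(k))^2 \mid \F_{j-1}] \leq \psi_j(k)^{-2}\,\Pp(\Delta N_j(k) \neq 0 \mid \F_{j-1}),
\]
and the same conditional distributions from Lemma \ref{lemma:recX}, together with $N_{j-1}(k), N_{j-1}(k-1) \leq V_{j-1}$, bound the probability by $((2k-\alpha)V_{j-1} + \theta\mathbf{1}_{k=1})/(j-1+\theta)$. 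Writing $V_{j-1} = \phi_{j-1}(V_{j-1}/\phi_{j-1})$ and using $\phi_{j-1} \leq C_\phi (j-1+\theta)^\alpha$ together with the upper bound on $1/\psi_j(k)^2$, the right-hand side takes the shape demanded by hypothesis (2), with a nonnegative sequence $b_{j-1}$ that is identically zero for $k\geq 2$ and uniformly bounded by some $B=B(\alpha,\theta)$ for $k=1$ (this bound comes from the $\theta/(j-1+\theta)^\alpha$ residual term).

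With both hypotheses verified, Lemma \ref{Martingale:Bound} delivers
\[
\Pp\!\left(|M_n(k)| \geq \tfrac{\sqrt{2}\,d}{\Gamma(k+\theta)}(n+\theta)^{k-\alpha/2}\,A'\right) \leq e^{-c_M A'}
\]
for any $A'\geq B$. The choice $A' = \max\{B,\,c_M^{-1}(A+2\log n)\}$ makes the exponent at most $-(A+2\log n)$, producing the probability bound $e^{-A}/n^2$, while $\sqrt{2}\,d\,A' \leq h_{\alpha,\theta}(A + 2\log n)$ holds once $h_{\alpha,\theta}=h_{\alpha,\theta}(\alpha,\theta)$ is chosen large enough to absorb both the $B$ and the $c_M^{-1}$ contributions. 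The main routine obstacle is the bookkeeping of Gamma-function constants so that the combinatorial prefactor $2k-\alpha$ in hypothesis (2) (rather than some larger $k$-dependent constant) emerges cleanly from the variance computation; this is precisely what the ratio-of-Gammas estimates in the Appendix are designed to control.
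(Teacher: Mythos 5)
Your proposal is correct and follows essentially the same route as the paper: verify hypotheses (1) and (2) of Lemma \ref{Martingale:Bound} via the conditional distributions of $\Delta N_j(k)$ and the ratio-of-Gamma bounds in the Appendix (notably Lemma \ref{phipsi}), then invoke that lemma with the parameter chosen so the exponent yields $e^{-A}/n^2$ and absorb all constants into $h_{\alpha,\theta}$. Your variance bound $\E[(\Delta M_j(k))^2\mid\F_{j-1}]\le\psi_j(k)^{-2}\Pp(\Delta N_j(k)\neq 0\mid\F_{j-1})$ (using $\mathrm{Var}\le\E[(\Delta N_j)^2]=\Pp(\Delta N_j\neq0)$) is in fact a bit cleaner than the paper's, and your explicit $\max\{B,\,c_M^{-1}(A+2\log n)\}$ device to guarantee the side condition $A\ge\max_j b_j$ addresses a point the paper leaves implicit.
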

\begin{proof}
We will prove that the martingales in (\ref{def:Mn1}) and (\ref{def:Mnk}) satisfy the hypotheses of Lemma \ref{Martingale:Bound}, and the result will follow from that lemma.

Since $1/\psi_n(k)$ is increasing, by Lemma \ref{Bound:psik} in Appendix, the following bound holds
\begin{align*}
|\Delta M_{j}(1)|  \leq \frac{ e^{\frac{1}{12}} \Gamma(\alpha+\theta) }{\Gamma(1+\theta)} (n +\theta)^{1-\alpha}.
\end{align*}
And by definition of $\Delta M_j = \zeta_j$, we also have that
\begin{align*}
\mathbb{E}[(\Delta M_{j}(1))^2| \F_{j-1}] & =  \frac{1}{\psi_{j}(1)^2} \cdot \left[ \frac{\alpha V_{j-1} + \theta }{j-1+\theta} \cdot \left(1 - \frac{\alpha V_{j-1} + \theta- (1-\alpha) N_{j-1}(1)}{j-1+\theta}\right)^2 \right. \\
& \ \ + \left. \frac{(1 -\alpha)N_{j-1}(1) }{j-1+\theta} \cdot \left(-1 - \frac{\alpha V_{j-1} + \theta - (1-\alpha)N_{j-1}(1) }{j-1+\theta}\right)^2 \right]\\
& \leq  \frac{4}{\psi_{j}(1)^2} \frac{ V_{j-1} + \theta}{j-1+\theta}.
\end{align*}
Multiplying and deviding the above expression by $\phi_{j-1}$ and using the bound 
$$\frac{\phi_{j-1}}{(\psi_{j}(k))^2 \cdot(j-1+\theta)} \leq \frac{e^{1/6} \Gamma(1+\theta)\Gamma(\alpha+\theta)^2}{\Gamma(1+\theta+\alpha)\Gamma(k+\theta)^2}, (j-1+\theta)^{1-\alpha},$$
which may be deduced from see Lemma \ref{phipsi} in appendix, it follows that
\begin{align*}
\mathbb{E}[(\Delta M_{j}(1))^2| \F_{j-1}] & \leq  \frac{4\phi_{j-1}}{(j-1+\theta)\psi_{j}(1)^2} \frac{ V_{j-1} + \theta}{\phi_{j-1}} \\
& \leq   \frac{4 e^{1/6} \Gamma(1+\theta)\Gamma(\alpha+\theta)^2}{\Gamma(1+\theta+\alpha)\Gamma(1+\theta)^2} (j-1+\theta)^{1-\alpha} \frac{ V_{j-1} + \theta}{\phi_{j-1}},
\end{align*}
and since $2-\alpha > 1$, it also follows that
\begin{align*}
\mathbb{E}[(\Delta M_{j}(1))^2| \F_{j-1}] & \leq   \frac{4 (2-\alpha)  e^{1/6} \Gamma(1+\theta)\Gamma(\alpha+\theta)^2}{\Gamma(1+\theta+\alpha)\Gamma(1+\theta)^2} (j-1+\theta)^{1-\alpha} \frac{ V_{j-1} + \theta}{\phi_{j-1}}.
\end{align*}
Analogously, for $k>1$, we have
\begin{align*}
&  \mathbb{E}[(\Delta M_{j}(k))^2| \F_{j-1}] \\
&=   \frac{1}{\psi_{j}^2(k)} \cdot \left[ \frac{N_{j-1}(k-1)(k-1 - \alpha)}{j-1+\theta} \cdot \left(1- \frac{(k - 1- \alpha)N_{j-1}(k-1) - (k - \alpha)N_n(k)}{j-1+\theta} \right)^2  \right. \\
&\ \ + \left. \frac{N_{j-1}(k)(k -\alpha) }{j-1+\theta} \cdot \left(-1- \frac{(k - 1- \alpha)N_{j-1}(k-1) - (k - \alpha)N_{j-1}(k)}{j-1+\theta} \right)^2   \right]\\
&\leq    \frac{4}{\psi_{j}^2(k)(j-1+\theta)} [ N_{j-1}(k-1)(k-1 - \alpha)+ N_{j-1}(k)(k -\alpha) ].
\end{align*}
Since $N_j(k)$ is bounded from above by $V_j$, for all $k$ and $j$, we obtain 
\begin{align*}
 \mathbb{E}[(\Delta M_{j}(k))^2| \F_{j-1}] &\leq   \frac{4 \phi_{j-1}}{(\psi_{j}(k))^2 \cdot(j-1+\theta)} \left[ \frac{V_{j-1}}{\phi_{j-1}} \cdot(k-1 - \alpha)+ \frac{V_{j-1}}{\phi_{j-1}}\cdot (k -\alpha) \right]  \\
&  \leq  \frac{4(2k-\alpha)\Gamma(1+\theta)\Gamma(\alpha+\theta)^2}{\Gamma(1+\theta+\alpha)\Gamma(k+\theta)^2}   (j-1+\theta)^{2k-\alpha-1} \frac{V_{j-1}}{\phi_{j-1}}.
\end{align*}
Finally, by Lemma \ref{Martingale:Bound} we have, for 
\[
h_{\alpha,\theta} = \frac{2 \sqrt{2} e^{\frac{1}{12}} }{c_M} \cdot \Gamma(\alpha+\theta) \cdot \max \left\{1, 2\sqrt{\frac{\Gamma(1+\theta)}{\Gamma(1+\theta+\alpha)}} \right\},
\]
that
\begin{align*}
 	\Pp\left(|M_{n}(k)| \geq \frac{ h_{\alpha,\theta} }{\Gamma(k+\theta)} (n+\theta)^{k-\frac{\alpha}{2}} (A+ \log^2 n)  \right) \leq  \frac{e^{- A}}{n^2},
\end{align*}
as we desired.
\end{proof}

\section{Bounds for the number of parts with size $k$} \label{sec:UperXnk}

Let us go through what we did in Section \ref{numbersizek}. In Subsection \ref{sec:recurrenceXn} we found recurrence relations relating the values $X_n(k)=N_n(k)/\psi_n(k)$ for different $k$ and $n$. This is the content of Lemma \ref{lemma:recX}, where we obtained that:
\begin{eqnarray*}
X_{n}(1) &=& M_{n}(1) + \sum_{j=1}^{n-1}\frac{\alpha V_j }{(j+\theta)\psi_{j+1}(1)}+ \theta_{n}(1); \\
X_{n}(k) & =& M_{n}(k)+X_k(k) + \frac{k-1-\alpha}{k-1+\theta} \sum_{j=k}^{n-1} X_j(k-1), \forall k > 1.\end{eqnarray*}
The terms $M_n(k)$ above are martingales. Subsection \ref{sec:martingaleXn} proves that the martingale terms are all small. Since we already know $V_j/\phi_j\approx V_*$ for $j$ large, this will lead to bounds of the form:
\begin{eqnarray*}
X_{n}(1) &\approx & a_0(1)\,V_*; \\
X_{n}(k) & \approx &\frac{k-1-\alpha}{k-1+\theta} \sum_{j=k}^{n-1} X_j(k-1), \forall k > 1\end{eqnarray*}
where 
\[a_0(1):=\frac{\alpha}{\alpha+\theta}.\]
If we treat the above recursions as equalties, we then obtain by induction in $k$ that
\[X_{n}(k) \approx a_0(k)\,V_*\,n^{k}\]
where 
\[a_0(k) = \frac{(k-1-\alpha) \cdot a_0(k-1)}{(k-1+\theta)k} = \frac{\Gamma(k-\alpha) \Gamma(1+\theta)}{k! \cdot \Gamma(1-\alpha) \Gamma(k+\theta)} a_0(1).\]

The purpose of this section is to make the above approximations precise and to show that $X_n(k)$ does behave as expected up to leading order, in high probability. In particular, we will prove the following Theorem (recall the definition of $X_n(k)$ in (\ref{def:Xnk})).

\begin{theorem}\label{UpXn}
Given $A>K(\alpha,\theta)$, $n \in \N$ and $k \leq n$, there are coefficients $a_0(k)$ (defined above) and $a_1(k)$ with
$a_1(k) = O \left( a_0(k) \cdot k^{\alpha+2} \right) $, such that the following holds. Define the event where $X_m(s)$ is ``well-controlled from above".
\[F^{(up)}_{m,s}:=\Big\{ X_m(s) \leq a_0(s)V_* (m-1)^s + a_1(s) (m+\theta)^{s-\alpha/2} (A + \log n)\Big\}.\]
Similarly, define the event that $X_m(s)$ is ``well-controlled from below".
\[F^{(dn)}_{m,s}:=\Big\{ X_m(s) \geq a_0(s)V_* (m-s)^s - a_1(s) (m+\theta)^{s-\alpha/2} (A + \log n) \Big\}.\]
Finally, define the event where the above inequalities hold for all times $m\leq n$ and part sizes $s\leq k$:
\[E_{n,k} := \bigcap_{m\leq n}\bigcap_{s\leq k}\,(F^{(up)}_{m,s}\cap F^{(dn)}_{m,s}).\]
Then:$$ \Pp( E_{n,k} ) \geq 1 - \frac{k}{n} e^{-A}.$$
\end{theorem}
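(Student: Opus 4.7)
The plan is to prove the theorem by induction on the part size $s \in \{1, 2, \ldots, k\}$, with the bounds for each $s$ established simultaneously for all $m \leq n$. The entire argument takes place on a single high-probability event $G = G_V \cap G_M$, where $G_V$ comes from Theorem \ref{thm:V} (applied with a $\delta \asymp (k/n)e^{-A}$) and gives uniform control $|V_m/\phi_m - V_*| = O((A + \log n)/(m+\theta)^{\alpha/2})$, and $G_M$ is the intersection over all $(m,s)$ with $m \leq n$, $s \leq k$, of the events from Lemma \ref{ord:Mnk} bounding $|M_m(s)|$ by $\frac{h_{\alpha,\theta}}{\Gamma(s+\theta)}(m+\theta)^{s-\alpha/2}(A + \log n)$ (the extra $\log n$ absorbs the union bound over the $O(nk)$ choices of $(m,s)$, since $\sum_m 1/m^2 = O(1)$). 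A final union bound gives $\Pp(G^c) \leq (k/n)e^{-A}$.

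For the base case $s=1$, the recurrence from Lemma \ref{lemma:recX} reads $X_m(1) = M_m(1) + \theta_m(1) + \sum_{j=1}^{m-1}\alpha V_j/((j+\theta)\psi_{j+1}(1))$. Writing $V_j = V_*\phi_j + (V_j - V_*\phi_j)$, the deterministic sum $V_*\sum_j \alpha\phi_j/((j+\theta)\psi_{j+1}(1))$ evaluates asymptotically, via the gamma-ratio estimates in the appendix, to $a_0(1)V_*(m-1) + O(m^{1-\alpha/2})$ with $a_0(1) = \alpha/(\alpha+\theta)$. The residual term, the martingale $M_m(1)$ bounded on $G_M$, and the deterministic $\theta_m(1)$ bounded by (\ref{ineq:boundthetan1}), all contribute to the error; collecting them yields the two bounds with an appropriate choice of $a_1(1)$. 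For the inductive step, substitute the induction hypothesis into $X_m(s) = M_m(s) + X_s(s) + \frac{s-1-\alpha}{s-1+\theta}\sum_{j=s}^{m-1} X_j(s-1)$. For the upper bound, the leading-order sum $\sum_{j=s}^{m-1}(j-1)^{s-1} \leq (m-1)^s/s$ yields $a_0(s)V_*(m-1)^s$, with $a_0(s) = \frac{s-1-\alpha}{s(s-1+\theta)}a_0(s-1)$ matching the formula from the preamble. For the lower bound, the shift $(j-(s-1))^{s-1}$ from the induction hypothesis produces $\sum_{j=s}^{m-1}(j-s+1)^{s-1} \geq (m-s)^s/s$, explaining the asymmetric exponents. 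The error contributions from the $X_j(s-1)$ terms combine, after summation, with the new martingale term $M_m(s)$ into an expression of the form $a_1(s)(m+\theta)^{s-\alpha/2}(A+\log n)$.

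The crux will be verifying that the recursion
\[
a_1(s) \leq \tfrac{s-1-\alpha}{(s-1+\theta)(s-\alpha/2)}\, a_1(s-1) + \tfrac{h_{\alpha,\theta}}{\Gamma(s+\theta)}
\]
keeps the ratio $a_1(s)/a_0(s)$ at most $O(s^{\alpha+2})$ throughout the induction. The multiplicative coefficient in front of $a_1(s-1)$ equals $a_0(s)/a_0(s-1)$ up to a $(1 + O(1/s))$ factor, so iterating produces $a_1(s)/a_0(s) \lesssim \sum_{j=1}^{s} (1 + O(1/j))^{s-j} \cdot (a_0(j)\Gamma(j+\theta))^{-1}$. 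Using the explicit form $a_0(j) = \Theta(\Gamma(j-\alpha)\Gamma(1+\theta)/(j!\,\Gamma(1-\alpha)\Gamma(j+\theta)))$, one finds $(a_0(j)\Gamma(j+\theta))^{-1} = \Theta(j!/\Gamma(j-\alpha)) = \Theta(j^{\alpha+1})$, and summing in $j$ yields the required $s^{\alpha+2}$ bound. A secondary technical point is the discrete-sum approximation $\sum_{j=s}^{m-1}(j-1)^{s-1} = (m-1)^s/s + O(m^{s-1})$: the discretization error must be absorbable into the $a_1(s)$ budget uniformly for every $m \leq n$, and a similar care is required on the lower side, where the assumption $A \geq K(\alpha,\theta)$ ensures that $V_*$ is controlled away from unwanted cancellations when subtracting the error.
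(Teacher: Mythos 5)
Your proposal is correct and follows essentially the same path as the paper: induction on the part size via the recursion in Lemma~\ref{lemma:recX}, control of the martingale via Lemma~\ref{ord:Mnk}, control of $V_j/\phi_j - V_*$ via Theorem~\ref{Vm}, a union bound, and an analysis of the $a_1(k)/a_0(k)$ recursion to obtain the $O(k^{\alpha+2})$ ratio. Two small remarks: the base case is cleaner than you suggest because $\phi_j/\big((j+\theta)\psi_{j+1}(1)\big) = 1/(\alpha+\theta)$ is an exact identity (no gamma-ratio asymptotics or residual error needed), and the iteration factor should be written as $\prod_{i=j+1}^{s}\big(1+O(1/i)\big) = O\big((s/j)^{\alpha/2}\big)$ rather than $(1+O(1/j))^{s-j}$ (which read literally can be exponentially large), though your final $s^{\alpha+2}$ conclusion is nonetheless correct and matches the paper's cleaner substitution $s(k)=\tfrac{a_1(k)}{a_0(k)}\tfrac{\Gamma(k-\alpha/2+1)}{\Gamma(k+1)}$.
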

As we will see, this theorem follows directly from the results in the remainder of this section. 

\begin{proof}[Proof of Theorem \ref{UpXn}]
 The bound $a_1(k) = O \left( a_0(k) \cdot k^{\alpha+2} \right) $ is contained in Lemma \ref{BoundRecCoef} in subsection \ref{sec:choice}. The probability of $E_{n,k}$ is bounded in Lemmas \ref{UpX1} and \ref{upXn} in subsection \ref{sec:proofbyinductionXn}.
\end{proof}

\subsection{The choice of coefficients}\label{sec:choice}

The coefficients $a_0(1)$ and $a_1(1)$ will arise from the analysis of the recursion (\ref{rec:Xn1}) in the Lemma \ref{lemma:recX}. As we have seen, $a_0(k)$ appears naturally when we work out the leading order terms for $X_n(k)$. The extra coefficient $a_1(k)$ controls the error, and comes from combining errors in estinating $X_s(k-1)$ (induction step); the error in setting $M_n(k)\approx 0$; and various other estimates in the proof (see (\ref{rec:Xnk}) and Lemma \ref{BoundRecCoef}). 

We define:
\begin{align}
\label{a01}a_0(1)&:= \frac{\alpha}{\alpha+\theta}, \\
\label{a11}a_1(1)&:= \frac{ h_{\alpha, \theta} }{\Gamma(1+\theta)}+ \frac{\alpha}{c_V(\alpha+\theta)(1-\frac{\alpha}{2})} + 1+ \frac{2\theta \Gamma(\alpha+\theta)}{(1-\alpha)\Gamma(1+\theta)}, \\
\label{a0} a_0(k) &:= \frac{(k-1-\alpha) \cdot a_0(k-1)}{(k-1+\theta)k} = \frac{\Gamma(k-\alpha) \Gamma(1+\theta)}{k! \cdot \Gamma(1-\alpha) \Gamma(k+\theta)} a_0(1) ,   \\
\label{a1} a_1(k) &:= \frac{h_{\alpha,\theta} }{ \Gamma(k+\theta) } +  \frac{(k-1-\alpha) \cdot a_1(k-1)}{(k-1+\theta)(k-\frac{\alpha}{2})}. 
\end{align}
From the analysis of recursions involving $X_n(k)$, it will arise naturally terms which are polynomials whose coefficients are the above coefficients. Thus, it will be useful to have estimates for such polynomials as well. We do this in the next lemma.
\begin{lemma} \label{BoundRecCoef}
The coefficients $a_0(k)$ and $a_1(k)$ defined as in (\ref{a0}) and (\ref{a1}) satisfy the following relations:

\begin{enumerate}

\item $\displaystyle \frac{k-1-\alpha}{k-1+\theta} \sum_{j=k}^{m-1}   a_0(k-1) j^{k-1} \ \ \leq a_0(k) m^k$,

\item $\displaystyle \frac{k-1-\alpha}{k-1+\theta} \sum_{j=k}^{m-1}   a_0(k-1) (j-(k-1))^{k-1} \ \ \geq a_0(k) (m-k)^k$,

\item $\displaystyle \frac{k-1-\alpha}{k-1+\theta} \sum_{j=k}^{m-1}  a_1(k-1) (j+\theta)^{k-1-\alpha/2}  \leq \left( a_1(k)- \frac{h_{\alpha,\theta} }{ \Gamma(k+\theta) }  \right)(m+\theta)^{k-\alpha/2}$,

\item $a_1(k) \leq C_U a_0(k) \cdot k^{\alpha+2} $, for some constant $C_U$.
\end{enumerate}
\end{lemma}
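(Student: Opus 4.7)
The plan is to prove the four parts in order, exploiting the explicit recursions (\ref{a0})--(\ref{a1}) defining $a_0(k)$ and $a_1(k)$. For parts (1)--(3), the strategy is the same in each case: the prefactor $(k-1-\alpha)/(k-1+\theta)$ together with the $a_{\cdot}(k-1)$ coefficient can be rewritten in terms of $a_{\cdot}(k)$ via the recursions, reducing each statement to an elementary integral-versus-sum comparison.

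For part (1), divide both sides by $a_0(k-1)(k-1-\alpha)/(k-1+\theta)$ and use $a_0(k)/a_0(k-1) = (k-1-\alpha)/((k-1+\theta)k)$ to reduce the claim to $\sum_{j=k}^{m-1} j^{k-1} \leq m^k/k$, which follows from $\sum_{j=k}^{m-1} j^{k-1} \leq \int_k^m x^{k-1}\,dx = (m^k-k^k)/k$. For part (2), after the same reduction the task is $\sum_{j=k}^{m-1}(j-(k-1))^{k-1} \geq (m-k)^k/k$; substituting $i=j-(k-1)$ gives $\sum_{i=1}^{m-k} i^{k-1}$, which is at least $\int_0^{m-k} x^{k-1}\,dx = (m-k)^k/k$. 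For part (3), the definition (\ref{a1}) of $a_1(k)$ shows that $a_1(k)-h_{\alpha,\theta}/\Gamma(k+\theta) = (k-1-\alpha)a_1(k-1)/((k-1+\theta)(k-\alpha/2))$, so the inequality reduces to $\sum_{j=k}^{m-1}(j+\theta)^{k-1-\alpha/2}\leq (m+\theta)^{k-\alpha/2}/(k-\alpha/2)$; since $x\mapsto (x+\theta)^{k-1-\alpha/2}$ is monotone increasing for $k\geq 2$ (using $\alpha<1$), bounding the sum above by the integral on $[k,m]$ yields the claim.

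The main content of the lemma is part (4). Setting $b(k):=a_1(k)/a_0(k)$, the two recursions combine to give
\[
b(k) \;=\; \tfrac{k}{k-\alpha/2}\,b(k-1) \;+\; \tfrac{h_{\alpha,\theta}}{\Gamma(k+\theta)\,a_0(k)}.
\]
Using the closed form for $a_0(k)$ in (\ref{a0}), one has $\Gamma(k+\theta)\,a_0(k) = C_{\alpha,\theta}\,\Gamma(k-\alpha)/k!$, and by the standard asymptotic $\Gamma(k-\alpha)/\Gamma(k+1)=\Theta(k^{-(1+\alpha)})$ the inhomogeneous term is $\Theta(k^{1+\alpha})$. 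Iterating gives
\[
b(k) \;=\; \Big(\prod_{j=2}^{k}\tfrac{j}{j-\alpha/2}\Big)\,b(1) \;+\; \sum_{j=2}^{k}\Big(\prod_{i=j+1}^{k}\tfrac{i}{i-\alpha/2}\Big)\,O(j^{1+\alpha}).
\]
Each product telescopes to a ratio of Gamma functions: $\prod_{i=j+1}^{k}(i/(i-\alpha/2)) = \Theta((k/j)^{\alpha/2})$. Hence $b(k) \leq O(k^{\alpha/2}) + O(k^{\alpha/2})\sum_{j=2}^{k} j^{1+\alpha/2}$, and since $\sum_{j=2}^{k} j^{1+\alpha/2} = \Theta(k^{2+\alpha/2})$ we conclude $b(k) = O(k^{2+\alpha})$, which is the desired bound.

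The only genuine obstacle is part (4): parts (1)--(3) are routine sum-to-integral comparisons once the coefficient identities are unwound, whereas (4) requires balancing the multiplicative growth factor $k/(k-\alpha/2)$, whose product blows up polynomially like $k^{\alpha/2}$, against an inhomogeneous forcing of order $k^{1+\alpha}$. The delicate point is that a naive exponentiated bound $\prod(1+\alpha/(2(j-\alpha/2)))$ would give the wrong rate; the bookkeeping must be carried out via the explicit Gamma-function telescoping to get the tight exponent $2+\alpha$.
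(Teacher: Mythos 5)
Your proof is correct and, for parts (1)--(3), follows exactly the paper's strategy: unwind the coefficient recursion, then compare the resulting sum with an integral of $x^{k-1}$ (or $(x+\theta)^{k-1-\alpha/2}$) from below or above as appropriate. Part (4) is also correct, and your affine recursion for $b(k)=a_1(k)/a_0(k)$ is exactly the paper's identity (\ref{eq:a1a0}). The only real difference is cosmetic: the paper solves that recursion by the ``integrating-factor'' device of multiplying through by $\Gamma(k-\alpha/2+1)/\Gamma(k+1)$ so that the homogeneous factor cancels and the recursion becomes a telescoping sum $s(k)=s(k-1)+\text{forcing}$; you instead iterate the affine recursion directly and bound the resulting products $\prod_{i=j+1}^k i/(i-\alpha/2)$ via Gamma-ratio asymptotics. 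These are two standard presentations of the same solution of a first-order linear recurrence, and both lead to $b(k)=O(k^{\alpha+2})$ through the same estimates $k!/\Gamma(k-\alpha)=\Theta(k^{1+\alpha})$ and $\prod_{i=j+1}^k i/(i-\alpha/2)=\Theta((k/j)^{\alpha/2})$.

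One small remark in your closing paragraph is off: you claim that a naive exponentiated bound on $\prod(1+\alpha/(2(j-\alpha/2)))$ ``would give the wrong rate.'' In fact $1+x\le e^x$ gives $\prod_{i=j+1}^k i/(i-\alpha/2)\le \exp\bigl(\sum_{i=j+1}^k \tfrac{\alpha/2}{i-\alpha/2}\bigr)=O((k/j)^{\alpha/2})$, which is the correct exponent; the Gamma-function telescoping is not needed for the upper bound. This does not affect the validity of your argument, but the caution is unnecessary.
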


\begin{proof} Throughout this proof we will make use of the integral bound below
\begin{equation}\label{eq:integralbound}
\frac{(m-k)^k}{k}= \int_{0}^{m-k} x^{k-1} \le \sum_{j=1}^{m-1}   j^{k-1} \leq \int_{0}^{m} x^{k-1} = \frac{m^k}{k}.
\end{equation}
\textit{(1)} For the first bound, observe that
\begin{align*}
 \frac{k-1-\alpha}{k-1+\theta} \sum_{j=k}^{m-1}   a_0(k-1) j^{k-1} \leq \frac{k-1-\alpha}{k-1+\theta}  a_0(k-1) \sum_{j=1}^{m-1}   j^{k-1}.
\end{align*}
Using the upper bound given by (\ref{eq:integralbound}), yields
\begin{align*}
 \frac{k-1-\alpha}{k-1+\theta} \sum_{j=\ell+1}^{m-1}   a_0(k-1) j^{k-1} \leq \frac{(k-1-\alpha) \cdot a_0(k-1)}{(k-1+\theta)k}
\end{align*}
which is exactly the definition of $a_0(k)$ in (\ref{a0}).
\\ 

\noindent \textit{(2)} For the second relation, we have
\begin{align*}
 \frac{k-1-\alpha}{k-1+\theta} \sum_{j=k}^{m-1}   a_0(k-1) (j-(k-1))^{k-1} &=  \frac{k-1-\alpha}{k-1+\theta} \sum_{j=1}^{m-k}   a_0(k-1) j^{k-1} \\
 \geq   \frac{k-1-\alpha}{k-1+\theta}  a_0(k-1) \sum_{j=1}^{m-k}   j^{k-1}.
\end{align*}
By the lower bound given by (\ref{eq:integralbound}), we obtain
\begin{align*}
 \frac{k-1-\alpha}{k-1+\theta} \sum_{j=1}^{m-k}   a_0(k-1) j^{k-1} &\geq \frac{(k-1-\alpha) \cdot a_0(k-1)}{(k-1+\theta)k}(m-k)^k = a_0(k)(m-k)^k.
\end{align*}

\noindent \textit{(3)} If we proceed exactly as in the item (1) we obtain
\begin{align*}
\displaystyle \frac{k-1-\alpha}{k-1+\theta} \sum_{j=k}^{m-1}  a_1(k-1) (j+\theta)^{k-1-\alpha/2} 
\leq \frac{(k-1-\alpha) \cdot a_1(k-1)}{(k-1+\theta)(k-\frac{\alpha}{2})}(m+\theta)^{k-\alpha/2},
\end{align*}
but by definition (\ref{a1})
\begin{align*}
\frac{(k-1-\alpha) \cdot a_1(k-1)}{(k-1+\theta)(k-\frac{\alpha}{2})}  = \left( a_1(k)- \frac{h_{\alpha,\theta} }{ \Gamma(k+\theta) }  \right).\\
\end{align*}

\noindent \textit{(4)} We begin substituting the formulas for $a_0(k)$ and $a_1(0)$ in an analogous way we did above, to obtain an affine recurrence
\begin{align} \label{eq:a1a0}
\frac{a_1(k)}{a_0(k)} &=  d \cdot \frac{k!}{ \Gamma(k-\alpha) } +  \frac{k \cdot a_1(k-1)}{(k-\frac{\alpha}{2}) \cdot a_0(k-1)},
\end{align}
where $d$ is defined as 
\[
d := d_{\alpha,\theta} \cdot \frac{ \Gamma(1-\alpha) }{ a_0(1) \cdot \Gamma(1+\theta) }.
\]
We rearrange (\ref{eq:a1a0}) by letting $s(k)$ to be 
\[
s(k) := \frac{a_1(k)}{a_0(k)} \frac{\Gamma(k-\frac{\alpha}{2}+1 )}{\Gamma(k+1)}
\]
and multiplying both sides by $\frac{\Gamma(k-\frac{\alpha}{2}+1 )}{\Gamma(k+1)}$ to obtain the identity below
\begin{align*}
s(k) &= s(k-1) +   d \cdot \frac{\Gamma(k-\frac{\alpha}{2}+1) }{ \Gamma(k-\alpha) },
\end{align*}
so we can find the general formula to the recurrence
\begin{align*}
s(k) &= s(1) +   d \cdot \sum_{j=1}^{k} \frac{ \Gamma(j-\frac{\alpha}{2}+1) }{ \Gamma(j-\alpha) }.
\end{align*}
Using the bound $\frac{\Gamma(j-\frac{\alpha}{2}+1)}{\Gamma(j-\alpha)} \leq e^{1/12} j^{1+ \frac{\alpha}{2}}$ we have
\begin{align*}
s(k) &= s(1) +  e^{1/12} d \cdot \sum_{j=1}^{k}  j^{\frac{\alpha}{2}+1} \leq s(1) +  e^{1/12} d \cdot k^{\frac{\alpha}{2} + 2} \leq d_1 k^{\frac{\alpha}{2} + 2},
\end{align*}
where $d_1 = 2\max\{s(1), e^{1/12}d\}$. Finally, we obtain
\begin{align*}
 \frac{a_1(k)}{a_0(k)} &\leq d_1 \dfrac{\Gamma(k+1)}{\Gamma(k-\frac{\alpha}{2}+1 )} k^{\frac{\alpha}{2} + 2} = C_U k^{\alpha + 2},
\end{align*}
for some $C_U$.

\end{proof}

\subsection{Bound on $X_n(k)$}\label{sec:proofbyinductionXn}
We now bound the probability of the events $E_{n,k}$ defined in the statement of Theorem \ref{UpXn}. Our approach is induction on $k$. But before we go to the proof, let us recall the definition of the sequence of events $E_{n,k}$. The event $F^{(up)}_{m,s}$ is defined as the event where $X_m(s)$ is ``well-controlled from above"
\[
F^{(up)}_{m,s} =\Big\{ X_m(s) \leq a_0(s)V_* (m-1)^s + a_1(s) (m+\theta)^{s-\alpha/2} (A + \log n)\Big\}.\]
Analogously, $F^{(dn)}_{m,s}$ is the event where $X_m(s)$ is ``well-controlled from below"
\[
F^{(dn)}_{m,s}:=\Big\{ X_m(s) \geq a_0(s)V_* (m-s)^s - a_1(s) (m+\theta)^{s-\alpha/2} (A + \log n) \Big\}.
\]
Finally, the event $E_{n,k}$ is the event where the above inequalities hold for all times $m\leq n$ and part sizes $s\leq k$:
\[E_{n,k} := \bigcap_{m\leq n}\bigcap_{s\leq k}\,(F^{(up)}_{m,s}\cap F^{(dn)}_{m,s}).\]
Now, we start by the case $k=1$.
\begin{lemma}[Case $k=1$] \label{UpX1}
Given $A>0$ and $n \in \N$, let $E_{n,1}$ be as in the statement of Theorem \ref{UpXn}. Then:
\begin{align*}
\Pp(E_{n,1}) \geq 1 - \frac{e^{-A}}{n}.
\end{align*}
\end{lemma}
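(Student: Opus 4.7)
My plan is to start from the recurrence (\ref{rec:Xn1}) and isolate a leading-order term of the form $a_0(1)V_*(m-1)$, then bound the remaining error uniformly in $m\leq n$. The first step is a short computation with Gamma functions: using definitions (\ref{def:phi}) and (\ref{def:psik}) together with $\Gamma(1+x)=x\Gamma(x)$, one checks that
\[\frac{\phi_j}{\psi_{j+1}(1)} = \frac{j+\theta}{\alpha+\theta}, \qquad \text{hence} \qquad \frac{\alpha\,\phi_j}{(j+\theta)\,\psi_{j+1}(1)} = a_0(1).\]
Multiplying and dividing by $\phi_j$ inside the sum in (\ref{rec:Xn1}) turns the recurrence into the clean form
\[X_m(1) = M_m(1) \;+\; a_0(1)\sum_{j=1}^{m-1}\frac{V_j}{\phi_j} \;+\; \theta_m(1),\]
which already exhibits the desired leading term once we substitute $V_j/\phi_j\approx V_*$.

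To control the two random pieces I would combine the earlier concentration results. Apply Theorem~\ref{thm:V} with $\delta = e^{-A}/(2n)$, which is legal since $A>K(\alpha,\theta)$ implies $\delta<e^{-K}$; this yields, off a set of probability at most $e^{-A}/(2n)$, the uniform bound $|V_j/\phi_j - V_*|\leq c_*[\log\log(j+2)+A+\log(2n)]/(j+\theta)^{\alpha/2}$ for every $j$. A comparison with an integral then gives
\[a_0(1)\left|\sum_{j=1}^{m-1}\left(\frac{V_j}{\phi_j}-V_*\right)\right| \;\leq\; \frac{a_0(1)\,c_*(A+\log n)}{1-\alpha/2}\,(m+\theta)^{1-\alpha/2}.\]
For the martingale $M_m(1)$ I would apply Lemma~\ref{ord:Mnk} at each $m\leq n$ with parameter $A+\log n$, and then take a union bound; this costs total probability at most $\sum_{m\leq n} e^{-A-\log n}/m^2 \leq 2e^{-A}/n$ and yields the uniform estimate $|M_m(1)|\leq (h_{\alpha,\theta}/\Gamma(1+\theta))\,(m+\theta)^{1-\alpha/2}(A+O(\log n))$.

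On the intersection of these two good events, together with the bound (\ref{ineq:boundthetan1}) which shows $\theta_m(1)=O((m+\theta)^{1-\alpha})=O((m+\theta)^{1-\alpha/2})$, I obtain
\[\bigl|X_m(1) - a_0(1)V_*(m-1)\bigr| \;\leq\; a_1(1)\,(m+\theta)^{1-\alpha/2}\,(A+\log n) \qquad \text{for all } m\leq n,\]
which is exactly the content of $F^{(up)}_{m,1}\cap F^{(dn)}_{m,1}$ (note that $(m-1)^1=(m-s)^s$ at $s=1$, so the upper and lower centers coincide). The total bad-event probability is at most $e^{-A}/n$ after absorbing the factor of $2$. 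The main obstacle I expect is bookkeeping rather than any deep argument: the target coefficient $a_1(1)$ in (\ref{a11}) is a specific sum of three pieces, and one must verify that the martingale constant $h_{\alpha,\theta}/\Gamma(1+\theta)$, the Theorem~\ref{thm:V} contribution $\alpha/(c_V(\alpha+\theta)(1-\alpha/2))$ (recall $c_* = 32/c_V$ in the proof of that theorem), and the deterministic $\theta_m(1)$ piece actually fit inside this $a_1(1)$ when all constants are tracked explicitly.
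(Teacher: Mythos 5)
Your proposal follows the paper's own argument almost exactly: the same Gamma-function identity $\alpha\phi_j/((j+\theta)\psi_{j+1}(1)) = a_0(1)$ to rewrite (\ref{rec:Xn1}), the same decomposition into martingale plus $V_*$-deviation plus the deterministic $\theta_m(1)$ piece, the same invocations of Lemma~\ref{ord:Mnk} (union over $m\le n$) and Theorem~\ref{thm:V}, and the same packaging of all three error contributions into $a_1(1)$. The only point worth flagging is the final union bound: with your choices ($\delta = e^{-A}/(2n)$ for Theorem~\ref{thm:V} and parameter $A+\log n$ in Lemma~\ref{ord:Mnk}) the total bad-event probability comes out to roughly $2.5\,e^{-A}/n$, and ``absorbing the factor of $2$'' is not literally available since the target bound $e^{-A}/n$ has no slack in its constant. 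This is easily fixed (use $\delta=e^{-A}/(4n)$ and parameter $A+\log(4n)$, say, or allocate the budget $e^{-A}/n^2$ to the $V$-event as the paper does), and the paper's own write-up has the same degree of casualness here, so this is genuinely bookkeeping rather than a gap in the idea.
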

\begin{proof}
The equation (\ref{rec:Xn1}) says us that
\begin{align*}
X_n(1) &= M_{n}(1) + \sum_{j=1}^{n-1}\frac{\alpha V_j }{(j+\theta)\psi_{j+1}}+ \theta_{n}(1).
\end{align*}	
We will bound each term in the right-hand side to obtain a bound on $X_n(1)$. Before, we manipulate algebraically the above expression for $X_n(1)$ in such way it can be expressed in terms of the observables we already know how to control. In this direction, we start summing and subtracting the sum below 
\[
\sum_{j= 1}^{n-1} \frac{\alpha \phi_j }{(j+\theta)\psi_{j+1}} V_*
\]
in the second member of (\ref{rec:Xn1}) to use that the ratio $V_j/\phi_j$ is approximated by $V_*$. This yields
\begin{align}\label{eq:Xn1mod}
X_n(1) &= M_{n}(1) + \sum_{j= 1}^{n-1} \frac{\alpha \phi_j }{(j+\theta)\psi_{j+1}} \left( \frac{V_j}{\phi_j} -V_*  \right)+ \sum_{j= 1}^{n-1} \frac{\alpha \phi_j }{(j+\theta)\psi_{j+1}} V_*  + \theta_{n}(1).
\end{align}
Using the relation below 
\[
\dfrac{ \phi_j }{(j+\theta)\psi_{j+1}(1)} =  \dfrac{1}{(\theta+\alpha)}
\] 
on identity (\ref{eq:Xn1mod}) allows us to obtain
\begin{align*}	
	X_n(1) &=  M_{n}(1) + \sum_{j= 1}^{n-1}  \frac{\alpha}{(\theta +\alpha)} \left( \frac{V_j}{\phi_j} -V_*  \right) + \frac{\alpha V_* }{(\theta+\alpha)} (n-1) + \theta_{n}(1).
\end{align*}
Taking the absolute value on both sides of the above identity and using the triangle inequality yields
\begin{align}	
 |X_n(1)| &\leq |M_{n}(1)| + \left|\sum_{j= 1}^{n-1}  \frac{\alpha}{(\theta +\alpha)} \left( \frac{V_j}{\phi_j} -V_*  \right)  \right| + \frac{\alpha V_* }{(\theta+\alpha)} (n-1) + \theta_{n}(1).
\end{align}
and
\begin{align}
|X_n(1)| & \geq \frac{\alpha V_* }{(\theta+\alpha)} (n-1) - |M_{n}(1)| - \left| \sum_{j= 1}^{n-1}  \frac{\alpha}{(\theta +\alpha)} \left( \frac{V_j}{\phi_j} -V_*  \right) \right|.
\end{align}
By Lemma \ref{ord:Mnk}, the probability of the event below
\begin{equation} \label{eq:evMn1}
\left \lbrace |M_{n}(1)| \geq \frac{ h_{\alpha, \theta} }{\Gamma(1+\theta)} (n+\theta)^{1-\frac{\alpha}{2}} (A + \log n) \right \rbrace
\end{equation}
is bounded from above by
\begin{align}
\label{step:martingale} \Pp\left(|M_{n}(1)| \geq \frac{ h_{\alpha, \theta} }{\Gamma(1+\theta)} (n+\theta)^{1-\frac{\alpha}{2}} (A + \log n) \right) \leq \frac{e^{-A}}{n^2},
\end{align}
and by Corollary \ref{thm:V}, with $\delta = \frac{e^{-A}}{n^2}$ and observing that $\log m \leq \log n$, for $1 \leq m \leq n-1$ we have 
\begin{align}
\Pp \left( \left| \frac{V_j}{\phi_j} - V_* \right| \geq \frac{A + \log n}{c_V (j+\theta)^{\alpha/2}},\mbox{ for some } 1 \leq j \leq n-1  \right) \leq \frac{e^{-A}}{n^2}.
\end{align}
On the occurrence of the event 
\begin{equation} \label{eq:evV}
\left\{ \left| \frac{V_j}{\phi_j} - V_* \right| \leq \frac{A +  \log n}{c_V (m+\theta)^{\alpha/2}}, \mbox{ for some } 1 \leq j \leq n-1  \right\}
\end{equation} 
we have
\begin{align}
\left| \sum_{j= 1}^{n-1}  \frac{\alpha}{(\theta +\alpha)} \left( \frac{V_j}{\phi_j} -V_*  \right) \right| &\leq \sum_{j= 1}^{n-1}  \frac{\alpha}{(\theta +\alpha)} \frac{A +  \log n}{c_V(j+\theta)^{\alpha/2}}\\
\label{step:V*}  &\leq \frac{(n-1+\theta)^{1-\alpha/2}}{1-\alpha/2}  \frac{\alpha (A +  \log n)}{c_V(\theta+\alpha)}.
\end{align}
By (\ref{ineq:boundthetan1}), the term $\theta_n$(1) is bounded in the following way
\begin{align*}
\theta_n(1) &\leq  1+ \frac{2\theta \Gamma(\alpha+\theta)}{(1-\alpha)\Gamma(1+\theta)} (n+\theta)^{1-\alpha} \\
&\leq \left( 1+ \frac{2\theta \Gamma(\alpha+\theta)}{(1-\alpha)\Gamma(1+\theta)} \right) (n+\theta)^{1-\frac{\alpha}{2}}(A+ \log n).
\end{align*}
Thus, on the occurrence of both events (\ref{eq:evMn1}) and (\ref{eq:evV}), we have
$$|X_n(1)| \leq a_0(1) V_*  (j-1)  +  a_1(1) (j+\theta)^{1-\alpha/2} (A + \log n)$$
for $a_0(1)$ and $a_1(1)$ whose definition we recall below
\begin{align}
a_0(1) &= \frac{\alpha}{\alpha+\theta}, \\
a_1(1) &= \frac{ h_{\alpha, \theta} }{\Gamma(1+\theta)}+ \frac{\alpha}{c_V(\alpha+\theta)(1-\frac{\alpha}{2})} + 1+ \frac{2\theta \Gamma(\alpha+\theta)}{(1-\alpha)\Gamma(1+\theta)}.
\end{align}
Therefore
\begin{align*}
\Pp((E_{n,1})^c)& \leq \sum_{j=1}^n \Pp\left(|M_{n}(j)| \geq \frac{ h_{\alpha, \theta} }{\Gamma(j+\theta)} (n+\theta)^{j-\frac{\alpha}{2}} (A + \log n) \right) 
\leq  \frac{e^{-A}}{n},
\end{align*}
which proves the first step of the induction.
\end{proof}
Now we prove on the next lemma the inductive step.
\begin{lemma}[The inductive step] \label{upXn}
Given $A>0$, $n \in \N$ and $k \leq n$, let $E_{n,k}$ be as in the statement of Theorem \ref{UpXn}. Then:
$$ \Pp( E_{n,k} ) \geq 1 - \frac{k}{n}e^{-A}.$$
\end{lemma}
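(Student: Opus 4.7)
The plan is to argue by induction on $k$. The base case $k=1$ is Lemma \ref{UpX1}, which yields $\Pp(E_{n,1})\ge 1-e^{-A}/n$. For the inductive step, assume $\Pp(E_{n,k-1})\ge 1-(k-1)e^{-A}/n$ and introduce the martingale-control event
\[G_{n,k}:=\bigcap_{m=k}^{n}\left\{|M_m(k)|\le \frac{h_{\alpha,\theta}}{\Gamma(k+\theta)}(m+\theta)^{k-\alpha/2}(A+\log n)\right\}.\]
Applying Lemma \ref{ord:Mnk} termwise (slightly inflating $A$ to absorb the $\log n$ factor arising from the union bound, exactly as in the proof of Lemma \ref{UpX1}) gives $\Pp(G_{n,k}^c)\le e^{-A}/n$. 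It then suffices to establish the deterministic inclusion $E_{n,k-1}\cap G_{n,k}\subseteq E_{n,k}$, since then
\[\Pp(E_{n,k}^c)\le \Pp(E_{n,k-1}^c)+\Pp(G_{n,k}^c)\le\frac{k-1}{n}e^{-A}+\frac{1}{n}e^{-A}=\frac{k}{n}e^{-A}.\]

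To verify the inclusion I would start from the recursion (\ref{rec:Xnk})
\[X_m(k)=X_k(k)+M_m(k)+\frac{k-1-\alpha}{k-1+\theta}\sum_{j=k}^{m-1}X_j(k-1),\qquad k\le m\le n,\]
and plug in the sandwich
\[a_0(k-1)V_*(j-(k-1))^{k-1}-a_1(k-1)(j+\theta)^{k-1-\alpha/2}(A+\log n)\le X_j(k-1)\le a_0(k-1)V_*(j-1)^{k-1}+a_1(k-1)(j+\theta)^{k-1-\alpha/2}(A+\log n)\]
supplied by $E_{n,k-1}$. Summing the upper inequality, Lemma \ref{BoundRecCoef}(1) transforms the $a_0(k-1)V_*(j-1)^{k-1}$ contributions into $a_0(k)V_*m^k$ and Lemma \ref{BoundRecCoef}(3) turns the error contributions into $(a_1(k)-h_{\alpha,\theta}/\Gamma(k+\theta))(m+\theta)^{k-\alpha/2}(A+\log n)$. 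The missing summand $h_{\alpha,\theta}/\Gamma(k+\theta)\cdot(m+\theta)^{k-\alpha/2}(A+\log n)$ is exactly what the bound on $G_{n,k}$ provides for $|M_m(k)|$, so $F^{(up)}_{m,k}$ holds for every $m\le n$. A symmetric calculation using items (2) and (3) of Lemma \ref{BoundRecCoef} delivers $F^{(dn)}_{m,k}$.

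The main obstacle I anticipate is purely bookkeeping: the recursive definition of $a_1(k)$ in (\ref{a1}) must match exactly the induction error $a_1(k-1)\cdot(k-1-\alpha)/[(k-1+\theta)(k-\alpha/2)]$ plus one fresh $h_{\alpha,\theta}/\Gamma(k+\theta)$ from the martingale, which is precisely the identity that Lemma \ref{BoundRecCoef}(3) is designed to enforce. A secondary nuisance is that Lemma \ref{BoundRecCoef}(1) delivers $a_0(k)V_*m^k$ whereas $F^{(up)}_{m,k}$ asks for $a_0(k)V_*(m-1)^k$; the discrepancy $O(a_0(k)V_*km^{k-1})$ is comfortably absorbed into the $a_1(k)(m+\theta)^{k-\alpha/2}(A+\log n)$ error because $k-1\le k-\alpha/2$ and $a_1(k)/a_0(k)\gtrsim k^{\alpha+2}$ by Lemma \ref{BoundRecCoef}(4), with $V_*$ itself controlled on $E_{n,k-1}$ via Theorem \ref{Vm}. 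The boundary term $X_k(k)=N_k(k)/\psi_k(k)=O(1)$ and the trivial range $m<k$ (where $X_m(k)=0$) are easily swallowed by the same error margin.
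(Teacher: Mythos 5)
Your plan coincides with the paper's proof: induction on $k$ with base case Lemma \ref{UpX1}, the martingale-control event $G_{n,k}$ bounded via Lemma \ref{ord:Mnk} and a union bound over $m\le n$, the deterministic inclusion $E_{n,k-1}\cap G_{n,k}\subseteq E_{n,k}$, and Lemma \ref{BoundRecCoef}(1)--(3) to propagate the sandwich on $X_j(k-1)$ through the recursion (\ref{rec:Xnk}); the probability bookkeeping is identical. The one place your reasoning goes wrong is the resolution of your ``secondary nuisance.'' You propose to absorb the gap between $a_0(k)V_*m^k$ and $a_0(k)V_*(m-1)^k$ using ``$a_1(k)/a_0(k)\gtrsim k^{\alpha+2}$'' and ``$V_*$ controlled on $E_{n,k-1}$,'' but Lemma \ref{BoundRecCoef}(4) gives an \emph{upper} bound $a_1(k)\le C_U a_0(k)k^{\alpha+2}$, not the lower bound you invoke, and $E_{n,k-1}$ imposes no a priori bound on $V_*$: the random variable $V_*$ appears on both sides of every inequality defining $F^{(up)}_{m,s}$ and $F^{(dn)}_{m,s}$, so membership in $E_{n,k-1}$ constrains $X_m(s)$ relative to $V_*$ but says nothing about the size of $V_*$ itself (the paper only bounds $V_*$ via the separate event $E_*$ introduced in the proof of Theorem \ref{thm:main}). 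In fact there is no gap to absorb: $F^{(up)}_{j,k-1}$ gives the sharper input $X_j(k-1)\le a_0(k-1)V_*(j-1)^{k-1}+a_1(k-1)(j+\theta)^{k-1-\alpha/2}(A+\log n)$, and reindexing $\ell=j-1$ and invoking the integral bound (\ref{eq:integralbound}) with right endpoint $m-1$ gives
\[
\frac{k-1-\alpha}{k-1+\theta}\sum_{j=k}^{m-1}a_0(k-1)(j-1)^{k-1}\le\frac{(k-1-\alpha)\,a_0(k-1)}{(k-1+\theta)\,k}\,(m-1)^k=a_0(k)(m-1)^k,
\]
landing exactly on the target $a_0(k)V_*(m-1)^k$. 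The $(m-1)^s$ in $F^{(up)}_{m,s}$ and the $(m-s)^s$ in $F^{(dn)}_{m,s}$ (matched by Lemma \ref{BoundRecCoef}(2)) are chosen precisely so the induction closes with no leftover term and no appeal to a bound on $V_*$.
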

\begin{proof} The key step of the proof is the following inclusion of events 
\begin{equation}\label{eq:inductivealgumacoisa}
E_{n,k} \supset E_{n,k-1} \cap \left\{|M_{j}(k)| \leq \frac{ h_{\alpha, \theta} }{\Gamma(k+\theta)} (j+\theta)^{k-\frac{\alpha}{2}} (A +  \log n)\mbox{, for all }1\leq j\leq n\right\},
\end{equation}	
for all $k \geq 2$. The result then follows by induction from our previous results and the inequality below 
\begin{align*}
\Pp( E_{n,k}^c ) &\leq \Pp( E_{n,k-1}^c ) + \sum_{j=1}^{n} \Pp\left(|M_{j}(k)| \geq \frac{ h_{\alpha, \theta} }{\Gamma(k+\theta)} (j+\theta)^{k-\frac{\alpha}{2}}(A +  \log n) \right)\\
& \leq (k-1) \frac{e^{-A}}{n} + n \frac{e^{-A}}{n^2}.
\end{align*}
Let us then explain why (\ref{eq:inductivealgumacoisa}) holds.
At a high level, when event $E_{n,k-1}$ occurs,we have that all $X_j(s)$ are ``well-behaved" for all values of $ s \leq k-1$ and $1 \leq j \leq n$. Now we will prove that combining this with bounds on the martingale component of $X_n(k)$, $X_n(k)$ itself will be ``well-behaved". To do that, we will just bound the recursion for $X_n(k)$ using the bounds given by the above events and Lemma \ref{BoundRecCoef}.

We start by restating the recursion for a fixed $m$: 
\begin{align}\label{eq:recXmk}
X_{m}(k) &=  M_{m}(k) + X_k(k)+ \frac{k-1-\alpha}{k-1+\theta} \sum_{j=k}^{m-1} X_j(k-1).
\end{align}
We let $Z_m(k-1)$ denote the sum in the RHS of the previous display.
\begin{equation}
Z_m(k-1) := \frac{k-1-\alpha}{k-1+\theta} \sum_{j=k}^{m-1} X_j(k-1).
\end{equation}
In the event
\[
E_{n,k-1} \cap  \left\{|M_{j}(k)| \leq \frac{ h_{\alpha, \theta} }{\Gamma(k+\theta)} (j+\theta)^{k-\frac{\alpha}{2}} (A + \log n),  \ \ \textrm{for all $1 \leq j \leq n$} \right\}
\]
we have that each $X_j(k-1)$ is bounded by 
\begin{align*}
X_j(k-1) \leq  a_0(k-1)V_* j^{k-1} + a_1(k-1) (j+\theta)^{k-1-\alpha/2} (A + \log n),
\end{align*}
which implies the following bound
\begin{equation}
Z_m(k-1)  \leq \frac{k-1-\alpha}{k-1+\theta} \sum_{j=k}^{m-1}  \Big[  a_0(k-1)V_* j^{k-1} + a_1(k-1) (j+\theta)^{k-1-\alpha/2} (A + \log n) \Big].
\end{equation}
Now, recall that Lemma \ref{BoundRecCoef} gives us bounds on the polynomials on $j$ whose coefficients are $a_i(k-1)$. Thus, combining this with the above bound we obtain
\begin{align}\label{aux:upper} 
Z_m(k-1) \leq a_0(k)V_* m^{k} & + \left( a_1(k)- \frac{h_{\alpha,\theta} }{ \Gamma(k+\theta) }  \right)(m+\theta)^{k-\alpha/2} (A +  \log n).
\end{align}
Arguing the same way, but applying the lower bound to $X_j(k-1)$ given by $E_{n,k-1}$ instead we may obtain that $Z_m(k-1)$ is bounded from below by
\begin{align*}
 \frac{k-1-\alpha}{k-1+\theta}  \sum_{j=k}^{m-1} \left[  a_0(k-1)V_*(j-(k-1))^{k-1}  - a_1(k-1) (A + 2 \log n)(j+\theta)^{k-1-\alpha/2} \right].
\end{align*}
And again, by Lemma \ref{BoundRecCoef} we have
\begin{align}
\label{aux:lower} Z_m(k-1) \geq    a_0(k)V_*(j-k)^{k}   - \left( a_1(k)- \frac{h_{\alpha,\theta} }{ \Gamma(k+\theta) }  \right)(m+\theta)^{k-\alpha/2} (A + 2 \log n).
\end{align}
On the other hand, since, for all $k$ we also have
\begin{align*}
X_{m}(k) &\geq M_{m}(k)  +\frac{k-1-\alpha}{k-1+\theta} \sum_{j=k}^{m-1} X_j(k-1),
\end{align*}
the result then follows by joining (\ref{aux:upper}) and (\ref{aux:lower}) with the martingale bound given by the other event in the intersection.
\end{proof}
\section{Proof of Theorem \ref{thm:main}} \label{proofmain}
This section is devote to the proof of Theorem \ref{thm:main} which ensures bounds to the number of parts of size $k$ itself.  
\begin{proof}[Proof of Theorem \ref{thm:main}]
First observe that in the event $E_{n,k}$ we have
\begin{align*}
X_n(k) - a_0(k)V_* n^k \leq  a_1(k) (n+\theta)^{k-\alpha/2} (A + \log n).
\end{align*}
Consequently, by lemma \ref{BoundRecCoef}
\begin{align*}
X_n(k) - a_0(k)V_* n^k \leq  C_U a_0(k) \cdot k^{\alpha+2} (n+\theta)^{k-\alpha/2} (A + \log n).
\end{align*}
By the same argument we also have the lower bound
\begin{align*}
X_n(k) \geq a_0(k)V_* (n-k)^k  - C_U a_0(k)(n+\theta)^{k-\alpha/2} (A + \log n).
\end{align*}
Moreover, note that 
\begin{align*}
(n-k)^k =n^k \left(1-\frac{k}{n} \right)^k .
\end{align*}
By Bernoulli's inequality,
\[
(1+x)^m \geq 1+mx
\] 
for all $x \geq -1$ and $m \in \N$.  Then, for $x= - k/n \geq -1$ and $m = k$ we have
\begin{align*}
n^k \left(1-\frac{k}{n} \right)^k  &\geq n^k\left(1-\frac{k^2}{n} \right) = n^k - k^2n^{k-1}.
\end{align*}
Thus
\begin{align*}
X_n(k) &\geq a_0(k)V_* (n^k - k^2n^{k-1})  - C_U a_0(k)(n+\theta)^{k-\alpha/2} (A + \log n),
\end{align*}
which implies
\begin{equation*}
X_n(k) -  a_0(k)V_* n^k \geq  - a_0(k)V_* k^2 n^{k-1} - C_U a_0(k) \cdot k^{\alpha+2} (n+\theta)^{k-\alpha/2} (A + \log n).
\end{equation*}
Moreover, on the occurrence of the event  
\[
E_*:= \left\{ V_* \leq \frac{2(A+\log n)}{c_V} \right\}
\] 
we also have
\begin{align*}
X_n(k) -  a_0(k)V_* n^k &\geq  - a_0(k)  \left( \frac{2}{c_V} k^2 n^{k-1} + C_U \cdot k^{\alpha+2} (n+\theta)^{k-\alpha/2} \right)(A + \log n) \\
&\geq - a_0(k)  D \cdot k^{\alpha+2} (n+\theta)^{k-\alpha/2} (A + \log n),
\end{align*}
where $D :=  2c^{-1}_V + C_U $. Thus, on the intersection of $E_{n,k}$ and $E_*$, we have 
\begin{align}
\label{eq:D}|X_n(k) -  a_0(k)V_* n^k| &\leq  D a_0(k)  k^{\alpha+2} (n+\theta)^{k-\alpha/2}(A + \log n).
\end{align}
To simplify our writing, define 
\begin{equation}
f_n(k) := a_0(k) \cdot \psi_n(k) \cdot n^k.
\end{equation}
Multiplying both sides of (\ref{eq:D}) by $\psi_{n}(k)$ we have
\begin{align*}
|N_n(k) -  f_n(k)V_*| &\leq  D f_n(k)  k^{\alpha+2} \frac{(n+\theta)^{k-\alpha/2}}{n^k}(A + \log n).
\end{align*}
Now, using that $1+x \leq e^x$, we have
\begin{align}
\label{bound:exponential} (n+\theta)^{\gamma}  \leq e^{\frac{\theta \gamma}{n}} n^{\gamma},
\end{align}
which implies, for $k< n/\theta$
\begin{align*}
|N_n(k) - f_n(k) V_*| &\leq  eD f_n(k)  \frac{ k^{\alpha+2}}{n^{\alpha/2}}  (A + \log n).
\end{align*}
Recalling the definition of $a_0(k)$ 
\begin{equation}
a_0(k) = \frac{\Gamma(k-\alpha) \Gamma(1+\theta)}{k! \cdot \Gamma(1-\alpha) \Gamma(k+\theta)} a_0(1)
\end{equation}
and replacing it and $\psi_n(k)$ on $f_n(k)$ it may be written as
\begin{align*}
f_n(k) &= \left[ \frac{ \alpha \Gamma(1+\theta)}{ \Gamma(1-\alpha)\Gamma(\alpha+\theta+1) }  \frac{ \Gamma(k-\alpha)}{ \Gamma(k+1)} \right] \cdot \left[  \frac{\Gamma(n-k+\alpha+\theta)}{\Gamma(n+\theta)} \right] n^k.
\end{align*}
By Lemma \ref{gammagamma} in the Appendix, for $k$ of order $n^{\alpha/(2\alpha+4)}$, we have
\begin{equation}
\left[  \frac{\Gamma(n-k+\alpha+\theta)}{\Gamma(n+\theta)} \right] = \frac{1}{n^{k-\alpha}}  \left(1 + O\left(\frac{k^2}{n-k} \right) \right),
\end{equation}
which implies
\begin{align}
\label{eq:orderfnk}f_n(k) &= \left[ \frac{ \alpha \Gamma(1+\theta)}{ \Gamma(1-\alpha)\Gamma(\alpha+\theta+1) }  \frac{ \Gamma(k-\alpha)}{ \Gamma(k+1)} \right]  \left(1 + O\left(\frac{k^2}{n} \right) \right) n^{\alpha}.
\end{align}
Now, by the above identity, we have that
\begin{equation}\label{eq:nnk}
N_n(k_n) - c_{\alpha,\theta}  \frac{ \Gamma(k-\alpha)}{ \Gamma(k+1)}   \cdot n^{\alpha} \cdot V_* = N_n(k_n) - f_n(k)V_* + \frac{ \Gamma(k-\alpha)}{ \Gamma(k+1)}  O\left(\frac{k^2}{n} \right)  \cdot n^{\alpha} \cdot V_*
\end{equation}
Also, observe that
\begin{equation}\label{ineq:gammaka}
\frac{\Gamma(k-\alpha)}{\Gamma(k+1)} \leq  e^{\frac{1}{12}} \left(1+\frac{1+\alpha}{k-\alpha} \right)^{1/2}\left( \frac{1}{k-\alpha}\right)^{1+\alpha}
\leq \frac{4}{k^{1+\alpha}}.
\end{equation}
Applying the triangle inequality on (\ref{eq:nnk}), recalling we are inside $E_*$ and using the above upper bound, we obtain that
\begin{align*}
\left| N_n(k) -  c_{\alpha,\theta}  \frac{ \Gamma(k-\alpha)}{ \Gamma(k+1)}   \cdot n^{\alpha} \cdot V_* \right| \leq  D_2 \left( \frac{ k^{\alpha+2}}{n^{\alpha/2}}  f_n(k)  +  \left( \frac{k  }{n} \right)^{1-\alpha}  \right) (A +  \log n).
\end{align*}
for some positive constant $D_2$. Finally, for every $k$ satisfying
\[
k \leq \frac{\varepsilon n^{\frac{\alpha}{2\alpha+4}}}{(\log n)^{\frac{1}{\alpha+2}}}
\]  there is another absolute constant $C$ such that 
\begin{align*}
\left| N_n(k) -  c_{\alpha,\theta}  \frac{ \Gamma(k-\alpha)}{ \Gamma(k+1)}   \cdot n^{\alpha} \cdot V_* \right|   &\leq  C  \frac{ \Gamma(k-\alpha)}{ \Gamma(k+1)}   \cdot n^{\alpha}   \frac{ \left( \frac{\varepsilon n^{\frac{\alpha}{2\alpha+4}}}{(\log n)^{\frac{1}{\alpha+2}}} \right)^{\alpha+2}}{n^{\alpha/2}}  (A + \log n) \\
&\leq  C \frac{ \Gamma(k-\alpha)}{ \Gamma(k+1)}   \cdot n^{\alpha} \cdot \varepsilon^{\alpha+2} \cdot \left( \frac{A}{\log n} + 1 \right),
\end{align*}
proving our main theorem.
\end{proof}

\section{Final remarks}
The main open problem that could be addressed by our methods is to push the analysis to larger values of $k$. We conjecture that a tighter analysis would work for all $k=o(n^{\alpha/(1+\alpha)})$ or some similar range. This is in the spirit of the recent paper by Brightwell and Luczak \cite{brightwell2012}. There the authors analyze the degree distribution of a preferential attachment tree nearly all the way to the maximum degree. Proving something similar in our setting would require modifications in Lemma \ref{Martingale:Bound}, where the quadratic variation of the martingale for $N_n(k)$ is controlled in a wasteful manner via $V_n$. 

Another kind of question is to study the distribution of the largest part sizes in $\sP_n$. We would like to obtain such results and apply them to the ``Hollywood model" of complex networks recently proposed by Crane and Dempsey \cite{crane2017}. 
 
\appendix

\section{Some estimates on $\Gamma(x)$}
In this appendix we prove some useful bounds regarding gamma functions and other relations involving them.
\subsection{Preliminaries estimates}
\begin{lemma}[Stirling formula for Gamma function - see formula 6.1.42 in \cite{abramowitz_stegun}] \label{Stirling}
For all $x>0$ we have $$ \frac{(2\pi)^{1/2}}{e^{x}}x^{x-\frac{1}{2}} \leq \Gamma(x) \leq \frac{(2\pi)^{1/2}e^{1/12x}}{e^{x}}x^{x-1/2}.$$
\end{lemma}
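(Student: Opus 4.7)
The plan is to derive the Stirling-type inequality from Binet's first integral representation of $\log\Gamma(x)$, which writes the deviation from the leading-order Stirling approximation as a single explicit integral that is easy to control from above and below. Concretely, I would first establish the identity
\begin{equation*}
\log\Gamma(x) = \left(x - \tfrac{1}{2}\right)\log x - x + \tfrac{1}{2}\log(2\pi) + J(x),
\qquad J(x) = \int_0^\infty f(t)\,\frac{e^{-xt}}{t}\,dt,
\end{equation*}
where $f(t) := \frac{1}{e^t - 1} - \frac{1}{t} + \frac{1}{2}$. Exponentiating this identity immediately reduces the lemma to the two-sided bound $0 \le J(x) \le 1/(12x)$ for all $x > 0$.

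To justify the identity, I would start from Gauss's integral representation of the digamma function $\psi(x) = \Gamma'(x)/\Gamma(x)$, integrate in $x$ from $x$ to $+\infty$ after subtracting the expected leading asymptotics $\log x - 1/(2x) - \cdots$, and identify the resulting integrand as $f(t)\,e^{-xt}/t$ via an elementary manipulation of the kernel $1/(e^t - 1)$. The integration constant $\tfrac{1}{2}\log(2\pi)$ is pinned down by matching the asymptotics as $x \to \infty$ using Wallis' product (equivalently, $\Gamma(\tfrac12) = \sqrt{\pi}$ combined with the Legendre duplication formula), so as to avoid circularity with Stirling itself.

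The analytic heart of the proof is the pointwise inequality $0 \le f(t) \le t/12$ for $t > 0$. The generating-function expansion $\frac{t}{e^t - 1} = \sum_{n \ge 0} \frac{B_n}{n!}\,t^n$ for the Bernoulli numbers yields
\begin{equation*}
f(t) = \sum_{n \ge 1} \frac{B_{2n}}{(2n)!}\,t^{2n - 1},
\end{equation*}
and since $B_2 = 1/6$ the leading term is $t/12$. Nonnegativity $f(t) \geq 0$ follows from a short calculus argument on $h(t) := (t-2)(e^t-1) + 2t$, while the sharp linear majorant $f(t) \le t/12$ is obtained by setting $g(t) := t/12 - f(t)$, checking $g(0^+) = 0$, $g(+\infty) = +\infty$, and verifying $g'(t) \ge 0$ by reducing the inequality to a polynomial comparison after multiplying through by $t^2(e^t-1)^2$. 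With $0 \le f(t) \le t/12$ in hand, nonnegativity of $J(x)$ is immediate, and
\begin{equation*}
J(x) \;\le\; \int_0^\infty \frac{t}{12}\cdot\frac{e^{-xt}}{t}\,dt \;=\; \frac{1}{12x},
\end{equation*}
which is exactly the ceiling required.

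The main obstacle is the derivation of Binet's formula and the identification of the constant $\frac{1}{2}\log(2\pi)$ without invoking Stirling itself, which would be circular; Wallis' product is the only ingredient I would use about the precise value of that constant. Once the identity is in place, the pointwise control of $f$ is an elementary exercise, and exponentiating the bounds $0 \le J(x) \le 1/(12x)$ produces the stated two-sided inequality for $\Gamma(x)$.
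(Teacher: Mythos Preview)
The paper does not prove this lemma at all; it is stated with a reference to formula 6.1.42 in Abramowitz--Stegun and used as a black box. Your proposal therefore goes well beyond what the paper does, supplying an actual proof where the paper simply cites the literature.

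Your route via Binet's first integral formula is standard and correct. The reduction of the two-sided inequality to $0 \le J(x) \le 1/(12x)$, with $J(x) = \int_0^\infty f(t)\,e^{-xt}\,t^{-1}\,dt$ and $f(t) = (e^t-1)^{-1} - t^{-1} + \tfrac12$, is exactly right, and the pointwise bounds $0 \le f(t) \le t/12$ do the job. One small imprecision: after multiplying $g'(t) \ge 0$ through by $t^2(e^t-1)^2$ you obtain an inequality in $t$ and $e^t$, not a polynomial comparison; you will still need a short transcendental argument (for instance, a series expansion or a substitution such as $u = e^{t/2}$) to finish. Also note that the Bernoulli series for $f$ has radius of convergence $2\pi$, so it cannot by itself certify $f(t) \le t/12$ globally; your derivative argument is needed precisely for that reason. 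These are minor points of execution, not gaps in the strategy.
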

\begin{lemma} \label{Approx: Stirling} For all positive $x$, it follows that
\begin{align*}
 \Gamma(x) = \frac{(2\pi)^{1/2}}{e^{x}}x^{x-\frac{1}{2}} \left( 1+ O\left(\frac{1}{x} \right) \right).
\end{align*}
\end{lemma}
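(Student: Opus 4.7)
The plan is to deduce this quantitative asymptotic directly from the two-sided bounds of the preceding Lemma~\ref{Stirling}, so essentially no new work is required beyond controlling an exponential factor. Let me write $S(x) := (2\pi)^{1/2}\,x^{x-1/2}\,e^{-x}$ for the main Stirling term that appears on both sides of Lemma~\ref{Stirling}. Dividing the inequalities in that lemma through by $S(x)$ gives the clean two-sided bound
\begin{equation*}
1 \ \leq\ \frac{\Gamma(x)}{S(x)} \ \leq\ e^{1/(12x)},
\end{equation*}
valid for every $x>0$. So it suffices to show that $e^{1/(12x)} - 1 = O(1/x)$.

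For this, I would invoke the elementary inequality $e^{y} - 1 \leq y\,e^{y}$ for $y \geq 0$ (immediate from $e^y - 1 = \int_0^y e^t\,dt \leq y\,e^y$). Specializing to $y = 1/(12x)$ with $x \geq 1$, we have $y \leq 1/12$, and therefore
\begin{equation*}
0 \ \leq\ \frac{\Gamma(x)}{S(x)} - 1 \ \leq\ e^{1/(12x)} - 1 \ \leq\ \frac{e^{1/12}}{12x}.
\end{equation*}
Setting $C := e^{1/12}/12$, this is exactly the statement $\Gamma(x)/S(x) = 1 + O(1/x)$ in the range $x \geq 1$, which is the regime where the big-$O$ notation is meaningful. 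Multiplying back through by $S(x)$ yields the claimed expansion.

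There is no genuine obstacle here; the entire argument is squeezing the ratio $\Gamma(x)/S(x)$ between $1$ and $e^{1/(12x)}$ and then linearizing the exponential. The only point requiring a word of care is that the statement of the lemma says ``for all positive $x$'', while the bound $e^{1/(12x)} - 1 \leq C/x$ is only a useful estimate once $1/(12x)$ lies in a bounded range (say $x \geq 1$); for small $x$ the $O(1/x)$ term is automatically vacuous as a bound. In the applications later in the paper (e.g.\ in Lemma~\ref{Ord:phin} and Lemma~\ref{gammagamma}) the arguments of $\Gamma$ are always large, so this asymptotic interpretation is precisely what is needed.
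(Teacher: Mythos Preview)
Your proof is correct and essentially identical to the paper's: both sandwich $\Gamma(x)/S(x)$ between $1$ and $e^{1/(12x)}$ using Lemma~\ref{Stirling} and then linearize the exponential (the paper phrases this last step as ``Taylor approximation''). Your version is slightly more explicit about the constant and the regime $x\geq 1$, but the argument is the same.
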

\begin{proof}
Observe that by the Lemma \ref{Stirling} 
\begin{align*}
 0 \leq \Gamma(x) - \frac{(2\pi)^{1/2}}{e^{x}} x^{x-\frac{1}{2}}  \leq  \frac{(2\pi)^{1/2}}{e^{x}}x^{x-\frac{1}{2}} \left( e^{1/12x} - 1 \right),
\end{align*}
and the result follows by Taylor approximation.
\end{proof}

\begin{lemma} \label{gamma}
Let $\beta, \lambda$ be two positive real numbers with $\beta> \lambda$ then
\begin{enumerate}
\item $\displaystyle \frac{\Gamma(\beta-\lambda)}{\Gamma(\beta)} \leq e^{\frac{1}{12(\beta-\lambda)}} \left( \frac{\beta}{\beta-\lambda}\right)^{1/2} \left(\frac{1}{\beta-\lambda} \right)^{\lambda}$;

\item $\displaystyle \frac{\Gamma(\beta)}{\Gamma(\beta-\lambda)} \leq e^{\frac{1}{12\beta}} \left( \frac{\beta - \lambda}{\beta}\right)^{1/2} \beta^{\lambda}$.
\end{enumerate}
\end{lemma}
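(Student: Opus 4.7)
The plan is to derive both bounds by applying Lemma \ref{Stirling} (Stirling's formula) separately to the numerator and denominator of each ratio, and then reducing the remaining arithmetic to the elementary inequality $1+x \leq e^x$.

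For part (1), I would use the \emph{upper} Stirling bound on $\Gamma(\beta-\lambda)$, which produces the factor $e^{1/(12(\beta-\lambda))}$, together with the \emph{lower} Stirling bound on $\Gamma(\beta)$. This gives
\[
\frac{\Gamma(\beta-\lambda)}{\Gamma(\beta)} \;\leq\; e^{\frac{1}{12(\beta-\lambda)}} \cdot e^{\lambda} \cdot \frac{(\beta-\lambda)^{\beta-\lambda-1/2}}{\beta^{\beta-1/2}}.
\]
Collecting the $1/2$-power terms into the factor $(\beta/(\beta-\lambda))^{1/2}$ and the $(\beta-\lambda)$-terms into $(1/(\beta-\lambda))^{\lambda}$, the claim reduces to showing
\[
\left(1 - \frac{\lambda}{\beta}\right)^{\beta} \leq e^{-\lambda},
\]
which is immediate from $1+x \leq e^x$ applied with $x = -\lambda/\beta$ and raised to the $\beta$-th power.

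Part (2) is entirely symmetric: I would apply the \emph{upper} Stirling bound to $\Gamma(\beta)$ (producing the factor $e^{1/(12\beta)}$) and the \emph{lower} Stirling bound to $\Gamma(\beta-\lambda)$. After the same kind of rearrangement — collecting half-power terms into $((\beta-\lambda)/\beta)^{1/2}$ and matching the $\beta^{\lambda}$ factor — the remaining inequality is
\[
\left(\frac{\beta}{\beta-\lambda}\right)^{\beta-\lambda} \leq e^{\lambda},
\]
which again follows from $1+x \leq e^x$ by setting $x = \lambda/(\beta-\lambda)$ and raising to the $(\beta-\lambda)$-th power.

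There is no genuine obstacle: both inequalities are routine Stirling estimates, and the only nontrivial algebraic step in each case is the final reduction to $(1+x)^n \leq e^{nx}$. The only care needed is to place the correction factor $e^{1/(12\cdot)}$ on the \emph{larger} of the two Gamma values (i.e., whichever one receives the upper bound) so that the exponent in the final estimate is the one stated in the lemma.
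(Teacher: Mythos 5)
Your proposal is correct and takes essentially the same approach as the paper: apply the upper Stirling bound to the Gamma in the numerator and the lower Stirling bound to the one in the denominator, then rearrange and finish with $(1+x)^n \leq e^{nx}$. The paper's proof does exactly this for part (1) and notes that part (2) is analogous.
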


\begin{proof} For the first item, by Lemma \ref{Stirling} and the bound $(1-\frac{x}{n})^n \leq e^{-x}$ it follows that
\begin{align*}
\frac{\Gamma(\beta-\lambda)}{\Gamma(\beta)} &\leq \frac{e^{\frac{1}{12(\beta-\lambda)}}(\beta-\lambda)^{\beta-\lambda-1/2}}{e^{\beta-\lambda}} \frac{e^{\beta}}{\beta^{\beta-1/2}}\\
&\leq \frac{e^{\frac{1}{12(\beta-\lambda)}}}{e^{-\lambda}}\left( 1-\frac{\lambda}{\beta}\right)^{\beta} \left( 1+\frac{\lambda}{\beta-\lambda}\right)^{1/2} (\beta-\lambda)^{-\lambda}\\
&\leq e^{\frac{1}{12(\beta-\lambda)}} \left( \frac{\beta}{\beta-\lambda}\right)^{1/2} \left(\frac{1}{\beta-\lambda} \right)^{\lambda}.
\end{align*}
The second item follows analogously.
\end{proof}

\begin{lemma} \label{BinomExpon}
For $0<x<1$ and $y>0$ we have
\begin{align*}
(1-x)^y &= e^{-xy}(1+O(y^2x^3)).
\end{align*}
\end{lemma}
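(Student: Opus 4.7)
The plan is to take logarithms and Taylor-expand the exponent. Starting from the identity $(1-x)^y = \exp\bigl(y\log(1-x)\bigr)$, which is valid for $x\in(0,1)$, I would substitute the Mercator series
\[
\log(1-x) = -x - \sum_{k\geq 2}\frac{x^k}{k}
\]
and then pull the leading term out of the exponential to obtain
\[
(1-x)^y = e^{-xy}\cdot\exp\bigl(-y R(x)\bigr), \qquad R(x) := \sum_{k\geq 2}\frac{x^k}{k}.
\]
The task is then reduced to showing that the second factor equals $1 + O(y^2 x^3)$.

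For the tail function $R(x)$ I would use the elementary estimate $R(x) \leq x^2/\bigl(2(1-x)\bigr)$, obtained by factoring $x^2/2$ out of the sum and recognizing the remainder as a geometric series, together with the sharper expansion $R(x) = x^2/2 + O(x^3)$ on any interval bounded away from $1$. Combining these with the linearization $e^{-t} = 1 + O(t)$ for bounded $t$, and, if necessary, the second-order expansion $e^{-t} = 1 - t + O(t^2)$, produces the multiplicative correction in $e^{-yR(x)}$. The bookkeeping is arranged so that the $-yx^2/2$ contribution from the leading piece of $R(x)$ and the quadratic contribution from the expansion of $e^{-yR(x)}$ combine into a term of the stated form under the implicit joint smallness hypothesis on $x$ and $y$.

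The only real obstacle is this final algebraic combination: the naive Taylor estimate produces an $O(yx^2)$ error, so matching the stated $O(y^2 x^3)$ requires either (i) imposing that $yx$ is bounded below, in which case $yx^2 \lesssim y^2 x^3$ and the bound is automatic, or (ii) carrying the Taylor expansion one order further and verifying that the $-yx^2/2$ term is absorbed in the relevant regime where this lemma is applied elsewhere in the paper. Either way the computation is elementary and there are no analytic subtleties beyond the standard Mercator series and linearization of the exponential.
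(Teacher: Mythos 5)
Your approach matches the paper's: write $(1-x)^y = \exp\bigl(y\log(1-x)\bigr)$, Taylor-expand the logarithm, pull out $e^{-xy}$, and control the correction factor $\exp(-yR(x))$ where $R(x) = x^2/2 + O(x^3)$. The concern you raise at the end is genuine and correct. The unconditional error from this argument is $O(yx^2)$, not $O(y^2x^3)$: the dominant correction is $-yx^2/2$, and no amount of further Taylor expansion can suppress it below that order. The paper's own proof in fact writes $\log(1-x) = -x - O(x^2)$ and concludes $(1-x)^y = (1-O(yx^2))e^{-xy}$, which establishes $O(yx^2)$ and not the $O(y^2x^3)$ claimed in the statement; the stated exponents appear to be a misprint.

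This is harmless for the paper because the lemma is applied exactly once, in Lemma \ref{gammagamma}, with $x=(k-\alpha)/(n+\theta)$ and $y=n+\theta$, and what is actually propagated there is $O\bigl((k-\alpha)^2/(n+\theta)\bigr) = O(k^2/n)$, which is precisely $O(yx^2)$. In that application $yx = k-\alpha$ is bounded below, so your fix (i) — assume $yx$ bounded away from zero, giving $yx^2 \lesssim y^2x^3$ — would reconcile the argument with the stated form; your fix (ii), carrying the Taylor expansion further, cannot work on its own since the $yx^2/2$ term does not cancel. In short, your reading of the obstruction is exactly right, and the correct general conclusion is $(1-x)^y = e^{-xy}\bigl(1+O(yx^2)\bigr)$.
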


\begin{proof}

Observe that $(1-x)^y  = \exp(y \cdot \log(1-x) )$. Recalling the Taylor expansion of $\log$ 
\begin{align*}
\log(1-x) = -x -O(x^2)
\end{align*}
we have
\begin{align*}
(1-x)^y  &= \exp(-xy -O(yx^2) ) = (1-O(yx^2))\exp(-xy ).  
\end{align*}

\end{proof}

\begin{lemma} \label{gammagamma}
For $k=O(n^{\frac{\alpha}{2\alpha+4}})$ we have
\begin{align*}
\frac{\Gamma(n+\theta-k+\alpha)}{\Gamma(n+\theta)} &=  \frac{1}{n^{k-\alpha}}  \left(1 + O\left(\frac{k^2}{n-k} \right) \right).
\end{align*}
\end{lemma}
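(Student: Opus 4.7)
The strategy is to apply the refined Stirling approximation of Lemma \ref{Approx: Stirling} to both Gamma functions, and then carefully expand the resulting ratio of powers. Write $y := n+\theta$ and $c := k-\alpha$, so that the statement becomes
\[
\frac{\Gamma(y-c)}{\Gamma(y)} = n^{-c}\bigl(1 + O(k^2/(n-k))\bigr).
\]
By Lemma \ref{Approx: Stirling}, the exponential factors $e^{-(y-c)}$ and $e^{-y}$ in the numerator and denominator combine to give a factor of $e^{c}$, and the error factors $(1+O(1/y))$ and $(1+O(1/(y-c)))$ combine to give a multiplicative error of $1 + O(1/(n-k))$ (since $n-k \leq n$). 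So the task reduces to analyzing the ratio of the polynomial parts:
\[
\frac{\Gamma(y-c)}{\Gamma(y)} = e^{c}\,\frac{(y-c)^{y-c-1/2}}{y^{y-1/2}}\bigl(1 + O(1/(n-k))\bigr).
\]

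The next step is to split the ratio of powers as
\[
\frac{(y-c)^{y-c-1/2}}{y^{y-1/2}} = \left(1 - \frac{c}{y}\right)^{y-1/2}\,(y-c)^{-c},
\]
and expand the first factor. Taking logarithms and using $\log(1-c/y) = -c/y - c^2/(2y^2) + O(c^3/y^3)$, one obtains
\[
(y-1/2)\log\!\left(1 - c/y\right) = -c + \tfrac{c}{2y} - \tfrac{c^2}{2y} + O(c^3/y^2),
\]
so that $(1 - c/y)^{y-1/2} = e^{-c}\,(1 + O(c^2/y))$ provided $c^2/y \to 0$. Multiplying by $e^{c}$ cancels the leading exponential, leaving
\[
\frac{\Gamma(y-c)}{\Gamma(y)} = (y-c)^{-c}\,\bigl(1 + O(c^2/y) + O(1/(n-k))\bigr).
\]

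It remains to replace $(y-c)^{-c} = (n+\theta-k+\alpha)^{-(k-\alpha)}$ by $n^{-(k-\alpha)}$. Writing $(n+\theta-k+\alpha)^{-c} = n^{-c}\bigl(1 + (\theta-c)/n\bigr)^{-c}$ and expanding the logarithm once more yields $\bigl(1 + (\theta-c)/n\bigr)^{-c} = 1 + O(c^2/n)$ under the assumption $k = O(n^{\alpha/(2\alpha+4)})$, which gives $k^2/n = O(n^{-4/(2\alpha+4)}) \to 0$. Collecting terms, the overall multiplicative error is $O(c^2/y) + O(1/(n-k)) + O(c^2/n)$, each of which is dominated by $O(k^2/(n-k))$ (using $k\geq 1$ to absorb the $1/(n-k)$ term), yielding the claim.

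The only real obstacle is bookkeeping: all of the Taylor remainders above must be uniformly controlled in the range $k = O(n^{\alpha/(2\alpha+4)})$, which is precisely the regime where the key quantities $c/y$ and $c^2/y$ tend to zero, so that the second-order expansion of $\log(1-c/y)$ is valid. No deep new idea is needed beyond carefully applying Stirling and the logarithmic expansion, but one must resist the temptation to use the weaker bound $O(y^2 x^3)$ written in the statement of Lemma \ref{BinomExpon}; it is the sharper $O(yx^2)$ bound (implicit in the proof of that lemma) that yields the correct order $k^2/(n-k)$ here.
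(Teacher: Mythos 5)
Your proof follows essentially the same route as the paper's: apply the refined Stirling estimate of Lemma \ref{Approx: Stirling} to both Gamma functions, factor the ratio of powers into a piece of the form $(1-c/y)^{y-1/2}$ (which cancels the $e^{c}$ from Stirling, up to a multiplicative $1+O(c^2/y)$) and a piece $(y-c)^{-c}$, and then convert $(y-c)^{-c}$ into $n^{-c}$ via one more logarithmic expansion, collecting all relative errors into $O(k^2/(n-k))$. Your closing observation is also correct and worth noting: the error bound stated in Lemma \ref{BinomExpon} reads $O(y^2x^3)$, which would give only $O(k^3/n)$ here and is too weak, but the \emph{proof} of that lemma actually produces the sharper $O(yx^2)$, and it is this sharper form that the paper's proof of Lemma \ref{gammagamma} silently invokes.
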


\begin{proof}
Using the expression given by Lemma \ref{Approx: Stirling}, we obtain
%
\begin{align*}
\frac{\Gamma(n+\theta-k+\alpha)}{\Gamma(n+\theta)} =  \frac{e^{k-\alpha}}{(n+\theta-k+\alpha)^{k-\alpha}} \left(1-\frac{k-\alpha}{n+\theta} \right)^{n+\theta-\frac{1}{2}} \frac{1+ O(\frac{1}{n+\theta-k+\alpha})}{1+ O(\frac{1}{n+\theta})}.
\end{align*}
Now, multiplying and dividing by $n^{k-\alpha}$ the right-hand side of the above identity becomes
\begin{align*}
\frac{e^{k-\alpha}}{n^{k-\alpha}} \left(1+\frac{k- \theta-\alpha}{n+\theta-k+\alpha} \right)^{k-\alpha} \left(1-\frac{k-\alpha}{n+\theta} \right)^{n+\theta-\frac{1}{2}} \frac{1+ O(\frac{1}{n+\theta-k+\alpha})}{1+ O(\frac{1}{n+\theta})}.
\end{align*}
Moreover, by the Lemma \ref{BinomExpon} it follows
\begin{align*}
 1 \leq \left(1+\frac{k- \theta-\alpha}{n+\theta-k+\alpha} \right)^{k-\alpha}  \leq \exp \left( \frac{(k-\alpha)(k- \theta-\alpha)}{n+\theta-k+\alpha} \right)
= 1 + O\left(\frac{k^2}{n-k} \right).
\end{align*}
Also, by Lemma \ref{BinomExpon}, for $x= \frac{k-\alpha}{n+\theta}$ and $y=n+\theta$ we have
\begin{align*}
\left(1-\frac{k-\alpha}{n+\theta} \right)^{n+\theta} &=  \exp \left( -(n+\theta)  \frac{k-\alpha}{n+\theta} \right) \exp \left( O\left( \frac{(k-\alpha)^2}{n+\theta} \right) \right)\\
&=  e^{-k+\alpha} \left( 1+  O\left( \frac{k^2}{n} \right) \right),
\end{align*}
and for $k=O(n^{\frac{\alpha}{2\alpha+4}})$
\begin{align*}
 e^{-k+\alpha} & \left( 1+  O\left( \frac{k^2}{n} \right) \right) \left(1-\frac{k-\alpha}{n+\theta} \right)^{-\frac{1}{2}} \left(1 + O\left(\frac{k^2}{n-k} \right) \right) \frac{1+ O(\frac{1}{n+\theta-k+\alpha})}{1+ O(\frac{1}{n+\theta})} \\
&=  e^{-k+\alpha}  \left(1 + O\left(\frac{k^2}{n} \right) \right).
\end{align*}
Thus
\begin{align*}
\frac{\Gamma(n+\theta-k+\alpha)}{\Gamma(n+\theta)} &=  \frac{e^{k-\alpha}}{n^{k-\alpha}}e^{-k+\alpha} \left( 1+  O\left( \frac{k^2}{n} \right) \right) =  \frac{1}{n^{k-\alpha}}  \left(1 + O\left(\frac{k^2}{n} \right) \right).
\end{align*}
\end{proof}

\subsection{Order of $\phi_n$ and $\psi_n(k)$}
This part is devoted to prove bounds for the two normalizing factors $\phi_n$ and $\psi_n(k)$ whose definition we recall latter.
\[
\phi_n = \frac{\Gamma(1+\theta)}{\Gamma(1+\theta+\alpha)}\,\frac{\Gamma(n+\alpha+\theta)}{\Gamma(n+\theta)},
\]

\begin{lemma} \label{Ord:phin}
Let $\phi_n$ be as above, then the following bounds hold
\begin{enumerate}
\item $\displaystyle \frac{1}{\phi_j} < \frac{2\Gamma(1+\theta+\alpha)}{\Gamma(1+\theta) \cdot (j+\theta)^{\alpha}}$;

\item $\displaystyle \frac{1}{(j+\theta) \phi_{j+1}} < \frac{2\Gamma(1+\theta+\alpha)}{\Gamma(1+\theta) \cdot (j+\theta)^{1+\alpha}}$;

\item There exists a constant $C_{\phi}$ such that
$$ \phi_j \leq C_{\phi} j^{\alpha};$$
\end{enumerate}
In particular $\phi_n = \Theta(n^{\alpha})$.
\end{lemma}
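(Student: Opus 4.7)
The plan is to derive all three bounds by rewriting $\phi_j$ as a ratio of gamma functions and invoking the sharp estimate in Lemma \ref{gamma}. Recall
\[
\phi_n = \frac{\Gamma(1+\theta)}{\Gamma(1+\theta+\alpha)}\cdot \frac{\Gamma(n+\alpha+\theta)}{\Gamma(n+\theta)},
\]
so every quantity in the statement boils down to controlling $\Gamma(j+\theta)/\Gamma(j+\alpha+\theta)$ (for the upper bounds on $1/\phi_j$) or $\Gamma(j+\alpha+\theta)/\Gamma(j+\theta)$ (for the upper bound on $\phi_j$).

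For item (1), I would invert $\phi_j$ to obtain
\[
\frac{1}{\phi_j} \;=\; \frac{\Gamma(1+\theta+\alpha)}{\Gamma(1+\theta)}\cdot \frac{\Gamma(j+\theta)}{\Gamma(j+\alpha+\theta)},
\]
then apply Lemma \ref{gamma}(1) with $\beta = j+\alpha+\theta$ and $\lambda = \alpha$ (so $\beta-\lambda = j+\theta$). This yields
\[
\frac{\Gamma(j+\theta)}{\Gamma(j+\alpha+\theta)} \;\leq\; e^{\tfrac{1}{12(j+\theta)}}\Bigl(\tfrac{j+\alpha+\theta}{j+\theta}\Bigr)^{1/2}\,(j+\theta)^{-\alpha}.
\]
Since $\theta>-\alpha$ and $\alpha\in(0,1)$, the residual prefactor $e^{1/(12(j+\theta))}\sqrt{1+\alpha/(j+\theta)}$ decreases monotonically to $1$ as $j$ grows and can be bounded by $2$ (adjusting the leading constant if necessary for small $j$). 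Item (2) is then immediate: apply item (1) with $j+1$ in place of $j$ and divide by $(j+\theta)$; replacing $(j+1+\theta)^{-\alpha}$ by $(j+\theta)^{-\alpha}$ at the cost of a universal constant gives the stated inequality.

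For item (3), I would apply the companion estimate Lemma \ref{gamma}(2) with the same parameters $\beta = j+\alpha+\theta$, $\lambda=\alpha$, obtaining
\[
\frac{\Gamma(j+\alpha+\theta)}{\Gamma(j+\theta)} \;\leq\; e^{\tfrac{1}{12(j+\alpha+\theta)}}\Bigl(\tfrac{j+\theta}{j+\alpha+\theta}\Bigr)^{1/2}\,(j+\alpha+\theta)^{\alpha}.
\]
Multiplying by $\Gamma(1+\theta)/\Gamma(1+\theta+\alpha)$ and using $(j+\alpha+\theta)^\alpha \leq (1+(\alpha+\theta)/j)^\alpha\, j^\alpha$, which is uniformly bounded for $j\geq 1$, produces the desired $C_\phi$. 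The two-sided conclusion $\phi_n = \Theta(n^\alpha)$ then follows by combining (1) and (3).

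The proof is essentially bookkeeping rather than conceptual, so the only real obstacle is verifying the explicit constant $2$ uniformly in $j$. Because the prefactor $e^{1/(12(j+\theta))}\sqrt{(j+\alpha+\theta)/(j+\theta)}$ can be close to (or slightly exceed) $2$ when $j+\theta$ is very small (which may happen for small $j$ when $\theta$ is near $-\alpha$), some care is needed: either one restricts the bound to $j$ above a threshold depending on $(\alpha,\theta)$, or one argues that $\phi_j$ is a finite product and the first few indices can be absorbed into a multiplicative constant. Either route is straightforward once the right estimate in Lemma \ref{gamma} is pinned down.
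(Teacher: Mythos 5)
Your approach is essentially the same as the paper's: rewrite $\phi_j$ as a ratio of gamma functions and invoke the Stirling-type bound in Lemma~\ref{gamma} with $\beta=j+\alpha+\theta$, $\lambda=\alpha$. Two small comments on the details. For item (2), the paper takes a slightly different (but equally elementary) route: it uses $\Gamma(x+1)=x\Gamma(x)$ to show directly that $\Gamma(j+1+\theta)/\Gamma(j+1+\theta+\alpha)\leq\Gamma(j+\theta)/\Gamma(j+\theta+\alpha)$ and then reuses the bound of item~(1); your route -- apply item (1) at index $j+1$ and use that $(j+1+\theta)^{\alpha}\geq(j+\theta)^{\alpha}$ -- is just as clean, and in fact your worry about ``paying a universal constant'' when replacing $(j+1+\theta)^{-\alpha}$ by $(j+\theta)^{-\alpha}$ is unfounded: the replacement is in the favorable direction, so no constant is lost. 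Both derivations of item (3) via Lemma~\ref{gamma}(2) agree; the paper merely asserts this step ``follows analogously.''

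Your caveat about the explicit constant $2$ is, however, a genuine observation and worth keeping. When $j+\theta$ is small -- which can happen at $j=1$ once $\theta$ is taken near $-\alpha$ (note $j+\theta>1-\alpha>0$, but $1-\alpha$ can be tiny when $\alpha$ is near $1$) -- the Stirling prefactor $e^{1/(12(j+\theta))}\bigl((j+\alpha+\theta)/(j+\theta)\bigr)^{1/2}$ is not bounded by $2$; the paper's proof silently asserts that it is. The lemma's \emph{statement} does appear to hold (numerically $y^{\alpha}\Gamma(y)/\Gamma(y+\alpha)\leq 2$ whenever $y\geq 1-\alpha$ with $\alpha\in(0,1)$), but the Stirling route as written does not yield it uniformly over the admissible parameter range; a separate elementary check for the finitely many small $j$, or a sharper gamma-ratio inequality (Wendel/Gautschi type), is needed to pin down the absolute constant. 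Your suggested fix -- absorb the first few indices into the constant, or restrict $j$ above a $(\alpha,\theta)$-dependent threshold -- is the right instinct, though if one wants the literal constant $2$ (which is propagated verbatim into Lemma~\ref{recV} and Theorem~\ref{Vm}) one must argue it directly rather than just enlarge the constant.
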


\begin{proof}
Let us prove the first two items and the third will follow analogously.

\noindent \textit{(1).} By Lemma \ref{gamma}
\begin{align*}
\frac{\Gamma(j+\theta)}{\Gamma(j+\theta+\alpha)} &\leq e^{\frac{1}{12(j+\theta)}}\left(1+\frac{\alpha}{j+\alpha+\theta} \right)^{1/2}  (j+\theta)^{-\alpha} \leq 2(j+\theta)^{-\alpha}.
\end{align*}
then
\begin{align*}
\frac{1}{\phi_j} < \frac{2\Gamma(1+\theta+\alpha)}{\Gamma(1+\theta) \cdot (j+\theta)^{1+\alpha}}.
\end{align*}

\noindent \textit{(2).} This part follows using the duplication property $\Gamma(x+1)=x \Gamma(x)$ and the previous item and the inequality
\begin{align*}
\frac{\Gamma(j+1+\theta)}{\Gamma(j+1+\theta+\alpha)} = \frac{(j+\theta) \Gamma(j+\theta)}{(j+\theta+\alpha) \Gamma(j+\theta+\alpha)} < \frac{\Gamma(j+\theta)}{\Gamma(j+\theta+\alpha)}.
\end{align*}

\end{proof}
The next lemma provides similar bounds for the normalization factor $\psi_n(k)$ whose definition is recalled bellow.
\begin{equation*}
\begin{split}
\psi_n(k) =  \frac{\Gamma(k+\theta)\Gamma(n-k+\alpha+\theta)}{\Gamma(\alpha+\theta)\Gamma(n+\theta)}.
\end{split}
\end{equation*}
\begin{lemma} \label{Bound:psik}
For $\psi_n(k)$ defined as above, the following bounds hold
\begin{enumerate} 
\item $\psi_n(k) \leq \dfrac{2\Gamma(k+\theta)}{ \Gamma(\alpha+\theta)} \dfrac{1}{(n+\theta-k+\alpha)^{k-\alpha}}$, for $n \geq 2k$;

\item $\dfrac{1}{\psi_n(k)} \leq \dfrac{ e^{\frac{1}{12}} \Gamma(\alpha+\theta)}{\Gamma(k+\theta)} (n +\theta)^{k-\alpha}$

\end{enumerate}
\end{lemma}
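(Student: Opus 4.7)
The plan is to observe that both inequalities reduce immediately to bounds on the ratio of two gamma functions, for which Lemma \ref{gamma} applies directly. Factor out the $k$-dependent prefactor:
\[
\psi_n(k) \;=\; \frac{\Gamma(k+\theta)}{\Gamma(\alpha+\theta)}\cdot \frac{\Gamma(n-k+\alpha+\theta)}{\Gamma(n+\theta)},
\]
so the task is to control $\Gamma(n-k+\alpha+\theta)/\Gamma(n+\theta)$ (for part (1)) and its reciprocal (for part (2)). In both cases I will take $\beta := n+\theta$ and $\lambda := k-\alpha$, so that $\beta-\lambda = n-k+\alpha+\theta$, which is positive since we work in the regime $\theta > -\alpha$ and $k\le n$ is assumed (with $n\ge 2k$ for the first bound).

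For part (2), Lemma \ref{gamma}(2) immediately yields
\[
\frac{\Gamma(n+\theta)}{\Gamma(n-k+\alpha+\theta)} \;\le\; e^{\frac{1}{12(n+\theta)}}\left(\frac{n-k+\alpha+\theta}{n+\theta}\right)^{1/2}(n+\theta)^{k-\alpha}.
\]
Since $\theta + \alpha > 0$ we have $n+\theta \ge 1$ for $n\ge 1$, so $e^{1/(12(n+\theta))}\le e^{1/12}$, and the square-root factor is at most $1$. Inverting and multiplying by $\Gamma(\alpha+\theta)/\Gamma(k+\theta)$ gives exactly the bound claimed in (2).

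For part (1), Lemma \ref{gamma}(1) gives
\[
\frac{\Gamma(n-k+\alpha+\theta)}{\Gamma(n+\theta)} \;\le\; e^{\frac{1}{12(n-k+\alpha+\theta)}}\left(\frac{n+\theta}{n-k+\alpha+\theta}\right)^{1/2}\frac{1}{(n-k+\alpha+\theta)^{k-\alpha}}.
\]
It remains to show that, under the hypothesis $n\ge 2k$, the product of the exponential and square-root prefactors is at most $2$. The ratio inside the square root is
\[
\frac{n+\theta}{n-k+\alpha+\theta} \;=\; 1 + \frac{k-\alpha}{n-k+\alpha+\theta},
\]
and for $n\ge 2k$ the denominator is at least $k+\alpha+\theta$, so this ratio is bounded by a constant (in fact by $2$, once one checks the elementary inequality $n-k+\alpha+\theta \ge k -\alpha$ for $n\ge 2k$ using $\theta>-\alpha$). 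The exponential factor is at most $e^{1/(12(\alpha+\theta))}$, a constant depending only on the model parameters. The mildly delicate point — and the only real obstacle — is absorbing both constant prefactors into the clean numerical constant $2$; this either requires a touch of case analysis (small versus large $n$) or a tiny enlargement of the absolute constant; in either case the adjustment is harmless since $\alpha,\theta$ are fixed model parameters. Multiplying by $\Gamma(k+\theta)/\Gamma(\alpha+\theta)$ then delivers inequality (1).
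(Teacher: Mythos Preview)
Your proof is correct and follows essentially the same route as the paper's: factor out $\Gamma(k+\theta)/\Gamma(\alpha+\theta)$ and apply the two parts of Lemma~\ref{gamma} with $\beta=n+\theta$, $\lambda=k-\alpha$. Your hedging on the constant in part~(1) is unnecessary: since $n\ge 2k$ gives $n-k+\alpha+\theta \ge k+\alpha+\theta \ge 1+\alpha+\theta > 1$, the exponential factor is at most $e^{1/12}$ (not merely $e^{1/(12(\alpha+\theta))}$), and combined with the square-root bound $\sqrt{2}$ you already derived this yields $e^{1/12}\sqrt{2}<2$ outright, with no case analysis or enlarged constant needed.
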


\begin{proof}
\noindent \textit{(1).} By Lemma \ref{gamma} we have
\begin{align*}
\frac{\Gamma(n-k+\alpha+\theta)}{\Gamma(n+\theta)} &\leq e^{\frac{1}{12(n+\theta-k+\alpha)}}  \left( 1+ \frac{k-\alpha}{n+\theta  - k + \alpha}\right)^{1/2} \frac{1}{(n+\theta - k +\alpha)^{k-\alpha}}.
\end{align*}
And for $2k \leq n$ it follows that
\begin{align*}
\frac{\Gamma(n-k+\alpha+\theta)}{\Gamma(n+\theta)} &\leq  \frac{2}{(n+\theta - k +\alpha)^{k-\alpha}}.
\end{align*}
Then 
\begin{equation*}
\psi_n(k) \leq \dfrac{2\Gamma(k+\theta)}{ \Gamma(\alpha+\theta)} \dfrac{1}{(n+\theta-k+\alpha)^{k-\alpha}}.
\end{equation*}

2. Again, by Lemma \ref{gamma} we have
\begin{align*}
\frac{\Gamma(n+\theta)}{\Gamma(n-k+\alpha+\theta)} &\leq e^{\frac{1}{12(n+\theta)}}  \left( 1- \frac{k-\alpha}{n+\theta}\right)^{1/2} (n+\theta)^{k-\alpha} \leq e^{\frac{1}{12}} (n+\theta)^{k-\alpha}
\end{align*}
and the result follows from the previous inequality.
\end{proof}

\begin{lemma} \label{phipsi} For the ration of the factors $\phi_n$ and $\psi_n(k)$ the following upper bound holds
\begin{align*}
\frac{\phi_j}{(\psi_{j+1}(k))^2 \cdot(j+\theta)} \leq \frac{\Gamma(1+\theta)\Gamma(\alpha+\theta)^2}{\Gamma(1+\theta+\alpha)\Gamma(k+\theta)^2} (j+\theta)^{2k-\alpha-1}.
\end{align*}
\end{lemma}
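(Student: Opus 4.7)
The plan is to establish the bound by direct algebraic simplification followed by an application of Lemma~\ref{gamma}. Substituting the explicit Gamma-function forms (\ref{def:phi}) for $\phi_j$ and (\ref{def:psik}) for $\psi_{j+1}(k)$, and using the functional equation $\Gamma(j+1+\theta)=(j+\theta)\Gamma(j+\theta)$ to cancel two copies of $\Gamma(j+\theta)$ arising from $(\psi_{j+1}(k))^{-2}$, one obtains the identity
\[
\frac{\phi_j}{(\psi_{j+1}(k))^2(j+\theta)} = \frac{\Gamma(1+\theta)\Gamma(\alpha+\theta)^2}{\Gamma(1+\theta+\alpha)\Gamma(k+\theta)^2} \cdot \frac{\Gamma(j+\alpha+\theta)\,\Gamma(j+\theta)\,(j+\theta)}{\Gamma(j+1-k+\alpha+\theta)^2}.
\]
The desired constant factors out exactly, so it remains to verify the Gamma-ratio inequality
\[
\frac{\Gamma(j+\alpha+\theta)\,\Gamma(j+\theta)}{\Gamma(j+1-k+\alpha+\theta)^2} \leq (j+\theta)^{2k-\alpha-2}.
\]

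For this remaining inequality, I would apply item (2) of Lemma~\ref{gamma} to each Gamma ratio separately. Taking $\beta = j+\alpha+\theta,\ \lambda = k-1$ for the first ratio and $\beta = j+\theta,\ \lambda = k-1-\alpha$ for the second, one obtains leading polynomial contributions $(j+\alpha+\theta)^{k-1}$ and $(j+\theta)^{k-1-\alpha}$ respectively, together with Stirling corrections of the form $e^{1/(12\beta)}((\beta-\lambda)/\beta)^{1/2}$. Multiplying, the polynomial part can be reorganized via $(j+\alpha+\theta)^{k-1} \leq (j+\theta)^{k-1}(1+\alpha/(j+\theta))^{k-1}$, and the combined square-root remainder simplifies to $(j+1-k+\alpha+\theta)/\sqrt{(j+\alpha+\theta)(j+\theta)}$, which is bounded by $1$ whenever $k \geq 1+\alpha$ and by a fixed constant otherwise.

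The main obstacle lies in pinning down the overall constant in a uniform way. The two sides of the target Gamma inequality are asymptotically equivalent (both grow like $(j+\theta)^{2k-2-\alpha}$ as $j \to \infty$), so the Stirling-type remainders cannot contribute more than a bounded overall factor. The delicate point is controlling the exponential pieces $e^{1/(12\beta)}$ and $(1+\alpha/(j+\theta))^{k-1}$ for small $j$; notice however that when the lemma is invoked inside the proof of Lemma~\ref{ord:Mnk}, a harmless factor of $e^{1/6}$ is already absorbed at the point of use. I therefore expect the plan above to yield the claim either exactly (under the mild hypothesis $k \geq 1+\alpha$, where the square-root factor provides the compensation) or with an inessential absolute multiplicative constant that can be silently absorbed downstream, matching how the bound is actually applied in the rest of the paper.
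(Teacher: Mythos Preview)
Your approach is essentially the same as the paper's: write out the definitions, factor the constant, and bound the remaining Gamma ratio via Lemma~\ref{gamma}. The paper's own proof is a two-line sketch that ends with the bound carrying an extra $e^{1/12}$ (not present in the lemma statement), so your observation that a harmless absolute constant is tacitly absorbed at the point of use in Lemma~\ref{ord:Mnk} is exactly right, and your more careful accounting of the square-root and $(1+\alpha/(j+\theta))^{k-1}$ corrections goes beyond what the paper spells out.
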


\begin{proof} Using the definition of both factors, we have
\begin{align*}
\frac{\phi_j}{(\psi_{j+1}(k))^2 (j+\theta)} = \frac{\Gamma(1+\theta)\Gamma(\alpha+\theta)^2}{\Gamma(1+\theta+\alpha)\Gamma(k+\theta)^2}  \frac{\Gamma(j+\theta)\Gamma(j-1+\alpha+\theta)}{\Gamma(j+1-k+\alpha+\theta)^2} (j+\theta)^2(j+\alpha+\theta)
\end{align*}
and using the bounds on ratio of gamma functions in Lemma \ref{gamma}, we have
\begin{align*}
 \frac{\phi_j}{(\psi_{j+1}(k))^2 \cdot(j+\theta)} &\leq \frac{e^{\frac{1}{12}}\Gamma(1+\theta)\Gamma(\alpha+\theta)^2}{\Gamma(1+\theta+\alpha)\Gamma(k+\theta)^2} (j+\theta)^{2k-\alpha-1}.
\end{align*}
\end{proof}

\bibliographystyle{plain}
\bibliography{refs}

\begin{thebibliography}{10}

\bibitem{abramowitz_stegun}
Milton Abramowitz and Irene~A. Stegun.
\newblock {\em Handbook of Mathematical Functions with Formulas, Graphs, and
  Mathematical Tables}.
\newblock Dover, New York, ninth dover printing, tenth gpo printing edition,
  1964.

\bibitem{aldous1983}
David Aldous, Illdar Ibragimov, and Jean Jacod.
\newblock {\em Ecole d'Ete de Probabilites de Saint-Flour XIII, 1983}, volume
  1117 of {\em Ecole d'Ete de Probabilites de Saint-Flour}.
\newblock Springer-Verlag Berlin Heidelberg, 1985.

\bibitem{brightwell2012}
Graham Brightwell and Malwina Luczak.
\newblock Vertices of high degree in the preferential attachment tree.
\newblock {\em Electron. J. Probab.}, 17:no. 14, 1--43, 2012.

\bibitem{chunglucomplex}
Fan Chung and Linyuan Lu.
\newblock {\em Complex Graphs and Networks}, volume 107.
\newblock AMS and CBMS, 2006.

\bibitem{craneubiq}
Harry Crane.
\newblock The ubiquitous ewens sampling formula.
\newblock {\em Statistical Science}, 31(1):1--19, 2016.

\bibitem{crane2017}
Harry Crane and Walter Dempsey.
\newblock Edge exchangeable models for interaction networks.
\newblock {\em Journal of the American Statistical Association}, 0(ja):0--0,
  2017.

\bibitem{ewens1972}
Warren~J. Ewens.
\newblock The sampling theory of selectively neutral alleles.
\newblock {\em Theoretical Population Biology}, 3(1):87--132, 1972.

\bibitem{favaro2015}
Stefano Favaro and Shui Feng.
\newblock Large deviation principles for the ewens-pitman sampling model.
\newblock {\em Electron. J. Probab.}, 20(40):26 pp., 2015.

\bibitem{favaro2018}
Stefano Favaro, Shui Feng, and Fuqing Gao.
\newblock Large deviation principles for the ewens-pitman sampling model.
\newblock {\em Sankhya A - Springer India}, (13171):1--12, 2018.

\bibitem{freedman1975}
David~A. Freedman.
\newblock On tail probabilities for martingales.
\newblock {\em Ann. Probab.}, 3(1):100--118, 02 1975.

\bibitem{blei2003}
Thomas~L. Griffiths, Michael~I. Jordan, Joshua~B. Tenenbaum, and David~M. Blei.
\newblock Hierarchical topic models and the nested chinese restaurant process.
\newblock In S.~Thrun, L.~K. Saul, and B.~Sch\"{o}lkopf, editors, {\em Advances
  in Neural Information Processing Systems 16}, pages 17--24. MIT Press, 2004.

\bibitem{kingman1978}
John~F.C. Kingman.
\newblock Uses of exchangeability.
\newblock {\em Ann. Probab.}, 6(2):183--197, 1978.

\bibitem{pitman1995}
James Pitman.
\newblock Exchangeable and partially exchangeable random partitions.
\newblock {\em Probab. Th. Rel. Fields}, 102(2):145--158, 1995.

\bibitem{pitman2002}
James Pitman.
\newblock {\em Combinatorial Stochastic Processes}, volume 1875.
\newblock Springer-Verlag Berlin Heidelberg, 2006.

\end{thebibliography}

\end{document}